\documentclass[12pts]{amsart}
\usepackage{amsthm,amsmath,stmaryrd,bbm,amssymb, mathrsfs, amsbsy, dsfont}
\usepackage[dvipsnames]{xcolor}
\usepackage[english]{babel}
\usepackage[utf8]{inputenc}

\usepackage{graphicx}
\usepackage{verbatim}
\usepackage{enumitem}
\usepackage{etoolbox}
\usepackage[colorlinks=true,linkcolor=blue, citecolor=blue]{hyperref}
\usepackage{todonotes}

\usepackage{tcolorbox}

\renewcommand{\H}{\mathbb H}
\newcommand{\R}{\mathbb R}

\renewcommand{\S}{\mathbb S}
\newcommand{\N}{\mathbb N}
\newcommand{\Z}{\mathbb Z}

\renewcommand{\AA}{\mathcal A}
\newcommand{\BB}{\mathcal B}
\newcommand{\CC}{\mathcal C}

\newcommand{\GG}{\mathcal G}
\newcommand{\HH}{\mathcal H}
\newcommand{\II}{\mathcal I}
\newcommand{\JJ}{\mathcal J}
\newcommand{\KK}{\mathcal K}
\newcommand{\LL}{\mathcal L}
\newcommand{\MM}{\mathcal M}
\newcommand{\NN}{\mathcal N}
\newcommand{\OO}{\mathcal O}
\newcommand{\QQ}{\mathcal Q}
\newcommand{\RR}{\mathcal R}
\renewcommand{\SS}{\mathcal S}
\newcommand{\TT}{\mathcal T}
\newcommand{\UU}{\mathcal U}
\newcommand{\VV}{\mathcal V}
\newcommand{\XX}{\mathcal X}

\newcommand{\MMM}{\mathscr M}

\theoremstyle{plain}
\newtheorem{theo}{Theorem}
\newtheorem{prop}[theo]{Proposition}
\newtheorem{lem}[theo]{Lemma}
\newtheorem{cor}[theo]{Corollary}
\theoremstyle{remark}
\newtheorem{rem}[theo]{Remark}
\theoremstyle{definition}

\newtheorem{hyp}{\textsc{Structural Assumptions}}

\numberwithin{equation}{section}
\numberwithin{theo}{section}

\def\le{\leqslant}
\def\ge{\geqslant}

\DeclareMathOperator{\id}{Id}

\def\eps{\varepsilon}
\renewcommand\d{\textnormal{d}}

\newcommand{\dom}{\mathscr{D}}

\DeclareMathOperator{\nul}{\mathrm{Ker}}
\DeclareMathOperator{\re}{\mathrm{Re}}

\newcommand{\ini}{\textnormal{in}}
\newcommand{\err}{\textnormal{err}}
\renewcommand{\wp}{\textnormal{wp}}
\newcommand{\ip}{\textnormal{ip}}
\newcommand{\ns}{\textnormal{NS}}

\newcommand{\hyd}{\textnormal{hydro}}

\newcommand{\kin}{\textnormal{kin}}

\newcommand{\Wave}{\textnormal{wave}}
\newcommand{\disp}{\textnormal{disp}}
\newcommand{\Inc}{\textnormal{inc}}
\newcommand{\Bou}{\textnormal{Bou}}

\def\la{\langle}
\def\ra{\rangle}
\makeatletter
\newsavebox{\@brx}
\newcommand{\lla}[1][]{\savebox{\@brx}{\(\m@th{#1\langle}\)}%
	\mathopen{\copy\@brx\kern-0.5\wd\@brx\usebox{\@brx}}}
\newcommand{\rra}[1][]{\savebox{\@brx}{\(\m@th{#1\rangle}\)}%
	\mathclose{\copy\@brx\kern-0.5\wd\@brx\usebox{\@brx}}}
\makeatother
\newcommand{\Nt}[1]{{\left\vert\kern-0.25ex\left\vert\kern-0.25ex\left\vert #1 
		\right\vert\kern-0.25ex\right\vert\kern-0.25ex\right\vert}}

\newcommand{\Ss}{V}       
\newcommand{\Ssp}{V^{\bullet}}    
\newcommand{\Ssm}{V^{\circ}}    
\newcommand{\rSSs}[1]{\VV^{#1}}	 		
\newcommand{\rhSSs}[1]{\dot{\VV}^{#1}}    
\newcommand{\rSSsp}[1]{\VV^{{\bullet}, #1}}     
\newcommand{\rSSsm}[1]{\VV^{{\circ}, #1}}    
\newcommand{\rhSSsm}[1]{\dot{\VV}^{{\circ}, #1}}    

\newcommand{\rSSSh}[1]{{\boldsymbol{\HH}}^{#1}}     
\newcommand{\rSSSk}[1]{\boldsymbol{\KK}^{#1}}     

\newcommand{\sumsp}[1]{{\boldsymbol{\XX}}^{#1}}

\newcommand{\hypst}[1]{\textnormal{\textbf{(#1)}}}

\makeatother

\title[Incompressible Navier-Stokes-Fourier limit of kinetic equations]{Incompressible Navier-Stokes limit of non-bilinear kinetic equations and application to the BGK, nonlinear Fokker-Planck and Boltzmann-Fermi-Dirac equations}

\author{Pierre Gervais}
\email{pierre.gervais@math.univ-toulouse.fr}

\newcommand{\natv}{\widetilde{\nabla}_v}

\begin{document}
	
	\maketitle
	
	\tableofcontents
	
	\begin{abstract}
		We consider collisional kinetic equations whose collision operator is not necessarily bilinear and prove quantitative convergence to the Navier-Stokes-Fourier system in weighted Sobolev spaces, together with a description of the initial layers. The aim of this paper is to conciliate the conditional convergence result of Bardos-Golse-Levermore \cite{BGLI91} for abstract kinetic equations conserving macroscopic quantities and dissipating entropy with the spectral strategy initiated by Bardos and Ukai \cite{BU91} for the Boltzmann equation. This work extends the abstract approach of \cite{GL24} which was restricted to bilinear collisions (Boltzmann, Landau or quadratic approximation of other models) to non-bilinear models such as the Boltzmann-Fermi-Dirac  equation, the BGK equation and the nonlinear Fokker-Planck equation.
	\end{abstract}
	
	\section{Introduction}
		\subsection{Presentation of the problem}
		
		We consider a generic kinetic equation with no force and local collisions in diffusive scaling:
		\begin{equation}
			\label{eq:generic_kinetic_equation}
			\eps \partial_t F + v \cdot \nabla_x F = \frac{1}{\eps} \CC[F] \ .
		\end{equation}
		In their seminal work \cite{BGLI91}, Bardos, Golse and Levermore established formally that any kinetic equation conserving mass, momentum, energy, and dissipating entropy towards maxwellian distributions, behaved asymptotically as the solutions to various fluid equations depending on the scaling considered. Specifically, for a diffusive scaling as \eqref{eq:generic_kinetic_equation}, fluctuations of size $\eps$ around a spatially homogeneous equilibrium are governed by the incompressible Navier-Stokes equations.
		
		\medskip
		Consider a spatially homogeneous equilibrium $\MM = \MM(v)$ (in the sense that $\CC[\MM] = 0$) and fluctuations of order $\eps$, that is $F(t, x, v) = \MM(v) + \eps f^\eps\left(  t , x , v \right)$, so that the profile $f^\eps$ satisfies the perturbation equation posed in $[0, \infty)_t \times \R^{2d}_{x,v}$
		\begin{equation}
			\label{eq:perturbed_scaled}
			\partial_t f^\eps = \frac{1}{\eps^2} \left( \LL - \eps v \cdot \nabla_x \right) f^\eps + \frac{1}{\eps} \QQ(f^\eps, f^\eps) +  \RR^\eps[f^\eps] \ , 
		\end{equation}
		where $\LL$, $\QQ$ and $\RR^\eps$ are given by the Taylor expansion of the collision operator:
		$$\CC[ \MM + \eps \varphi ] = \eps \LL \varphi + \eps^2 \QQ(\varphi, \varphi) + \eps^3 \RR^\eps[ \eps \varphi] \ .$$
		We recall the formal convergence result of Bardos, Golse and Levermore \cite[Theorem III]{BGLI91}.
		\begin{theo}
			\label{thm:BGL}
			Assume $\MM(v) = (2\pi)^{-\frac{d}{2}} e^{-\frac{|v|^2}{2}} $ and that $\CC$ formally conserves mass, momentum and energy:
			$$\forall \varphi \ , \quad \int_{\R^d} \CC[ \varphi ] (1, v_1, \dots, v_d, |v|^2) \d v = (0, \dots, 0) \ ,$$
			dissipates entropy:
			$$\forall \varphi \ , \quad \int_{ \R^d } \CC[\varphi] \log \varphi \ \d v \le 0 \ ,$$
			and vanishes on Maxwellians:
			$$\forall \varphi \ , \quad \left( \CC[\varphi]  = 0 \quad \Leftrightarrow \quad \varphi(v) = \frac{R_\varphi}{ (2\pi)^{\frac{d}{2}} } \exp \left(- \frac{|v-U_\varphi|^2}{2 T_\varphi} \right) \right) \ ,$$
			where $R_\varphi, U_\varphi, T_\varphi$ are the local mass density, velocity and temperature of $\varphi$:
			$$R_\varphi = \int_{ \R^d } \varphi \d v \ , \quad R_\varphi U_\varphi = \int_{ \R^d } v \varphi \d v \ , \quad R_\varphi \left( |U_\varphi|^2 + d T_\varphi \right) = \int_{ \R^d } |v|^2 \varphi \d v \ .$$
			If the kinetic equation \eqref{eq:perturbed_scaled} admits a solution $f^\eps$ such that $( f^\eps , \RR^\eps[f^\eps] ) \xrightarrow[\eps \to 0]{} (f^0, 0)$ in some appropriate sense, then 
			$$(2\pi)^{\frac{d}{2}} e^{\frac{|v|^2}{2}} f^0(t, x, v) = \rho(t, x) + v \cdot u(t, x) + \theta(t, x) \frac{ |v|^2- d}{2}$$
			where $(\rho, u, \theta)$ solves the incompressible Navier-Stokes-Fourier system:
			\begin{equation*}
				\begin{cases}
					\partial_t u + u \cdot \nabla u = \alpha \Delta u + \nabla p \ ,  & \nabla \cdot u = 0 \ , \\
					\partial_t \rho + u \cdot \nabla \rho = \beta \Delta \rho \ ,  & \nabla (\rho + \theta ) = 0 \ ,
				\end{cases}
			\end{equation*}
			for some positive constants $\alpha, \beta > 0$ depending only on $\LL$.
		\end{theo}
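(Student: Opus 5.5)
Since the statement is formal, the plan follows the classical Hilbert-type moment argument of Bardos--Golse--Levermore. The first step is to extract the structure of the linearized operator $\LL$ from the three hypotheses. Conservation gives $\int \LL\varphi\,(1,v,|v|^2)\,\d v=0$ and $\int \QQ(\varphi,\varphi)(1,v,|v|^2)\,\d v=0$. Differentiating the entropy inequality twice at $\MM$ shows that $\LL$ is symmetric and nonpositive on $L^2(\MM^{-1}\d v)$. Linearizing the Maxwellian characterization, $\nul \LL$ is exactly $\MM\,\mathrm{span}\{1,v_1,\dots,v_d,|v|^2\}$. I will also assume, as is implicit in ``appropriate sense'', that $\LL$ is Fredholm on $(\nul\LL)^\perp$, so that $\LL$ can be inverted there.

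\textbf{Step 1: the limit is infinitesimally Maxwellian.} Multiplying \eqref{eq:perturbed_scaled} by $\eps^2$ and letting $\eps\to0$ gives $\LL f^0=0$ in distributions, because $\eps\partial_t f^\eps$, $v\cdot\nabla_x f^\eps$, $\QQ(f^\eps,f^\eps)$ and $\RR^\eps$ are all bounded (formally). Hence $f^0=\MM(\rho+v\cdot u+\theta\frac{|v|^2-d}{2})$.

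\textbf{Step 2: incompressibility and Boussinesq.} Take moments of \eqref{eq:perturbed_scaled} against $1,v,\frac{|v|^2}{2}$; the collision terms vanish by conservation. Multiplying by $\eps$ gives the local conservation laws
\[ \eps\partial_t\la f^\eps\ra+\nabla_x\cdot\la v f^\eps\ra=0 \]
and its analogues. Letting $\eps\to0$ yields $\nabla\cdot u=0$ and $\nabla_x\la v\otimes v f^0\ra=\nabla(\rho+\theta)=0$.

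\textbf{Step 3: dynamics via inversion of $\LL$.} Write the momentum and energy equations as
\[ \partial_t\la v f^\eps\ra+\tfrac1\eps\nabla_x\cdot\la A(v) f^\eps\ra+\nabla_x\tfrac1\eps\la\tfrac{|v|^2}{d}f^\eps\ra=0, \]
with $A=v\otimes v-\frac{|v|^2}{d}I$ and $B=v(\frac{|v|^2}{2}-\frac{d+2}{2})$. Both $A\MM$ and $B\MM$ lie in $(\nul\LL)^\perp$, so set $\hat A=\LL^{-1}(A\MM)$ and $\hat B=\LL^{-1}(B\MM)$. Using the symmetry of $\LL$ and the equation solved for $\LL f^\eps$, we get
\[ \tfrac1\eps\la A f^\eps\ra=\tfrac1\eps\la \hat A,\LL f^\eps\ra=\la\hat A,\eps\partial_t f^\eps+v\cdot\nabla_x f^\eps-\QQ(f^\eps,f^\eps)-\eps\RR^\eps\ra. \]
As $\eps\to0$ this tends to $\la\hat A,v\cdot\nabla_x f^0\ra-\la\hat A,\QQ(f^0,f^0)\ra$. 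By isotropy the first term is $-\alpha(\nabla u+\nabla u^T)$. For the second, differentiate the identity $\CC[\text{Maxwellian}]=0$ twice to get $\QQ(g,g)=-\frac12\LL(g^2/\MM)$ for $g\in\nul\LL$ (in the normalized form). This makes the second term equal to $\la A,(f^0)^2/\MM\ra=u\otimes u-\frac{|u|^2}{d}I$. The same computation for $B$ gives $-\beta'\nabla\theta+\frac{d+2}{2}u\theta$.

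\textbf{Step 4: conclude.} Apply the Leray projector to remove the gradient term, which is absorbed into $\nabla p$; this gives the velocity equation. Take the combination of mass and energy equations that kills the $O(1/\eps)$ pressure term. Then substitute $\theta=-\rho$, using that $\rho+\theta$ is constant. This yields the advection-diffusion equation for $\rho$ with $\beta>0$, and positivity of $\alpha,\beta$ comes from $-\LL\ge0$ with a nondegenerate kernel.

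The main obstacle is Step 3, and specifically the nonlinear flux. Two things must be justified there: the identity relating $\QQ$ on the kernel to $\LL(g^2/\MM)$, which is where non-bilinearity enters and must come from differentiating $\CC$ along the Maxwellian family, and the passage to the limit in the quadratic term $\la\hat A,\QQ(f^\eps,f^\eps)\ra$, which needs strong convergence of $f^\eps$. This is exactly what ``appropriate sense'' must encode. Strong convergence also hides the acoustic waves, which must be discarded or shown to average out, via a compensated-compactness argument.
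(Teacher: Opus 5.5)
The paper gives no proof of this statement; it is quoted from \cite[Theorem III]{BGLI91}. Your Steps 1--4 are the Bardos--Golse--Levermore moment argument, and the computations are right. In particular, $\QQ(g,g)=-\frac12\LL(g^2/\MM)$ on $\nul\LL$ does follow from $\CC[\MM e^{\eps\phi}]=0$. The fluxes $u\otimes u-\frac{|u|^2}{d}I$ and $-\beta'\nabla\theta+\frac{d+2}{2}u\theta$ also check out, although your intermediate pairing is missing a factor $\frac12$: it should read $\frac12\int A\,(f^0)^2\MM^{-1}\,\d v$.

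What fails is your preliminary claim that the structure of $\LL$ you need follows from the three stated hypotheses.

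\textbf{Symmetry.} Expanding the entropy inequality gives $\int\CC[\MM+\eps g]\log(\MM+\eps g)\,\d v=\eps^2\la\LL g,g\ra_{L^2(\MM^{-1})}+O(\eps^3)$. This constrains only the symmetric part of $\LL$. For instance, add to the BGK operator the velocity rotation $-\nabla_v\cdot\big(\varphi\,J(v-U_\varphi)\big)$ with $J$ antisymmetric. This term preserves mass, momentum and energy, has zero entropy production, and vanishes exactly on Maxwellians. Yet it adds a nonzero skew-adjoint part (essentially $-Jv\cdot\nabla_v$) to $\LL$. It also breaks the isotropy you invoke in Step 3, which is not among the hypotheses either.

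\textbf{Null space.} Linearizing the zero set of $\CC$ only gives the inclusion $\MM\,\mathrm{span}\{1,v,|v|^2\}\subset\nul\LL$. For example, $\CC[\varphi]=-(\varphi-\MM[\varphi])\int(\varphi-\MM[\varphi])^2\MM[\varphi]^{-1}\,\d v$ satisfies all three hypotheses with $\LL=0$.

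Both properties are used essentially. Symmetry enters twice in Step 3: when you move $\LL$ onto $\hat A$, and in $\la\hat A,\LL(g^2/\MM)\ra=\la A\MM,g^2/\MM\ra$. It is needed again for $\alpha,\beta>0$. The exact null space is used in Step 1 and to invert $\LL$ on $A\MM$ and $B\MM$. So you must take self-adjointness, the exact kernel and rotation invariance as hypotheses, alongside the Fredholm property you already assume. This is what ``depending only on $\LL$'' tacitly presupposes, and what the paper imposes explicitly in \ref{L1}--\ref{L3} for its rigorous result. With these added, your derivation is the standard formal one.
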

		
		This result highlights the fact that the Navier-Stokes limit of kinetic equations conserving macroscopic quantites and dissipating entropy is a universal phenomenon, and reduces this issue to merely proving existence of kinetic solutions and proving convergence or compactness of enough moments of $f^\eps$, giving rise to two main lines of research. 
		
		The first one dealt with the Boltzmann equation, proving weak convergence of weak solutions far from equilibrium \cite{BGLII93, GSR04, GSR09, LM10, A12}. The second one dealt with strong convergence of smooth solutions close to equilibrium \cite{BU91, GT20, G23, B15, BMM19, CC26} and were based on a spectral analysis of the linearized equation uniformly in $\eps$ \cite{EP75, YY16}. These results were later extended to other kinetic equations with the same conservation laws and dissipating entropy, such as the Landau equation \cite{R21, CRT22}, the BGK model \cite{SR03}, nonlinear Fokker-Planck equation with constant temperature \cite{CJ24}, or Boltzmann-Fermi-Dirac model \cite{JXZ22}.
		
		Proving the stability of equilibria for such kinetic equations follow essentially the same strategy: controlling the nonlinearity using the energy norm and the norm dissipated by the linear part. With this observation, the author and B. Lods developped in \cite{GL24} a unified framework to derive the Navier-Stokes-Fourier system from any bilinear kinetic equation with the same conservation laws as the Boltzmann equation and satisfying a quantified version of the entropy dissipation principle ($\RR^\eps=0$ in \eqref{eq:perturbed_scaled}, \textit{e.g.} Boltzmann, Landau, or quadratic approximations of other equations) and revisited the spectral study \cite{EP75, YY16} with modern and quantitative arguments. Based on this work, the regularity assumption on the initial data was later relaxed in \cite{CGT26} for the Landau and Boltzmann equations.
		
		The current work aims at extending \cite{GL24} to any non-bilinear ($\RR^\eps \neq 0$) kinetic equation satisfying appropriate continuity estimates, thus allowing to consider the nonlinear Fokker-Planck model, BGK model, or Boltzmann-Fermi-Dirac. Note that, although such a connection is already known for the last two models, the current work aims at being a strong and quantified version of the Bardos-Golse-Levermore Theorem \ref{thm:BGL} together with a complete description of the initial layers. We also sensibly simplify the core of the proof by introducing a functional space suited to the multiscale structure of \eqref{eq:perturbed_scaled}.

		\subsection{Notations and abstract framework}
		
		For a measurable function $m = m(v) \ge 0$, we define the weighted Lebesgue spaces $L^p_v(m)$ through its norm
		$$\| f \|_{ L^p_v(m) } := 
		\begin{cases}
			\displaystyle \left( \int_{ \R^d } | f(v) |^p \d v \right)^{1/p}  \ , & 1 \le p < \infty \ , \\
			\\
			\displaystyle\sup_{ v \in \R^d } | f(v) | \ ,&  p = \infty \ ,
		\end{cases}$$
		and the weighted Sobolev space $H^n_v(m)$
		$$\| f \|_{ H^n(m)_v }^2 := \sum_{ k=0 }^n \| \nabla_v^k f \|_{ L^2(m) } \ .$$
		In this work, we will not consider weights depending on space, so we use the usual Lebesgue space $L^p_x$, and we will denote $\H^s$ (resp. $\dot{\H}^s$) for the inhomogeneous (resp. homogeneous) Sobolev spaces, which we reall to be defined through the norms
		$$\| f \|_{ \H^s }^2 := \int_{ \R^d } | \widehat{f}(\xi) |^2 \la \xi \ra^{2s} \d \xi \ , \qquad \| f \|_{ \H^s }^2 := \int_{ \R^d } | \widehat{f}(\xi) |^2 | \xi |^{2s} \d \xi \ ,$$
		where $\widehat{f}$ denotes the Fourier transform with respect to the spatial variable $x \in \R^d$.

		\medskip
		
		Consider some measurable weight function $\mu : \R^d \to (0, \infty)$, we will consider nested velocity spaces
		$$\Ssp \subset \Ss = L^2( \mu^{-1} ) \subset \Ssm := \left( \Ssp \right)'$$
		where the identification is performed through the inner product of $\Ss$, endowed with the norm
		$$\| f \|_{ \Ssm } = \sup_{ \| \varphi \|_{\Ssp} \le 1} \la f , \varphi \ra_\Ss \, .$$
		We will consider the following position-velocity Sobolev spaces:
		$$\rSSs{s} = \H^s_x \left( \Ss_v \right) \, , \quad \rSSsp{s} = \H^s_x \left( \Ssp_v \right) \, , \quad \rSSsm{s} = \H^s_x \left( \Ssm_v \right) \, , \quad \rhSSsm{s} = \dot{\H}^s_x \left( \Ssm_v \right)$$
		endowed with the norms
		$$\| f \|^2_{ \H^s_x ( X_v ) } := \int_{ \R^d } \| \widehat{f}(\xi) \|^2_{X_v} \la \xi \ra^{2s} \d \xi \ , \quad X = \Ss, \Ssp, \Ssm \ ,$$
		$$\| f \|^2_{ \dot{\H}^s_x ( \Ssm_v ) } := \int_{ \R^d } \| \widehat{f}(\xi) \|^2_{\Ssm_v} | \xi |^{2s} \d \xi \ .$$
		\medskip
		
		We now present our assumptions regarding $\LL$ and $\mu$.
		
		\begin{hyp}\label{AsL} The linear operator $\LL\: : \dom(\LL) \subset \Ss \to \Ss$ satisfies the following assumptions.
			\begin{enumerate}[label=\hypst{L\arabic*}]
				\item \label{L1} The operator $\LL$ is self-adjoint in $\Ss$ and commutes with orthogonal matrices:
				\begin{gather*}
					\langle \LL f, g \rangle_{\Ss} = \langle f, \LL g \rangle_{\Ss} = \langle \LL ({\Theta}f), \Theta g \rangle_\Ss,
				\end{gather*}
				for any $f, g \in \dom(\LL)$ and orthogonal matrix $\Theta \in \MMM_{d \times d}(\R)$, where $[\Theta f](v):=f(\Theta v)$.
				\item \label{L2} The weight function $\mu$ is positive, normalized, radial, and such that:
				\begin{gather*}
					\mu=\mu(|v|) > 0, \qquad \int_{\R^d} \mu(v) \d v = 1,\\
					E := \int_{\R^d} |v|^2 \mu(v) \d v < \infty, \qquad K := \frac{1}{E^2} \int_{\R^d} | v |^4 \mu(v) \d v < \infty.
				\end{gather*}
				\item \label{L3} The null-space of $\LL$ is given by
				\begin{equation*} 
					\nul\left( \LL \right) =\mathrm{Span}\left\{ \mu, v_1 \mu,  v_2 \mu, \dots, v_d \mu, |v|^2 \mu \right\}\end{equation*}
				and its domain $\dom(\LL)$ satisfies for some Hilbert space $\Ssp$
				\begin{equation*}
					\dom(\LL) \subset \Ssp \subset \Ss, \qquad \| \cdot \|_{\Ss} \le \| \cdot \|_{\Ssp},
				\end{equation*}
				and such that there holds for some $\lambda_{\LL} > 0$
				\begin{equation*}
					\la \LL f, f \ra_{\Ss} \le - \lambda_\LL \| f \|^2_{\Ssp} \qquad \text{ for any } f \in \dom(\LL) \cap \nul\left( \LL \right)^\perp\,.
				\end{equation*}
				\item \label{L4} The operator $\LL$ can be decomposed as 
				$$\LL = \BB+ \AA \qquad \text{with} \qquad \AA  : \Ss \to \Ss \ \text{bounded} ,$$
				where the splitting is compatible with a hierarchy of Hilbert spaces $(\Ss_j)_{j=0}^{2}$ such that
				\begin{enumerate}
					\item \label{assumption_hierarchy} the spaces $\Ss_j$ continuously and densely embed into one another:
					$$\Ss_{2} \hookrightarrow \Ss_1 \hookrightarrow \Ss_0 = \Ss,$$
					\item \label{assumption_multi-v} the multiplication by $v$ is bounded from $\Ss_{j+1}$ to $\Ss_{j}$, i.e.
					\begin{equation*}
						\|v f\|_{\Ss_{j}} \lesssim \|f\|_{\Ss_{j+1}} \qquad f \in \Ss_{j+1}, \quad j=0,1,
					\end{equation*}
					\item \label{assumption_bounded_A} the operator $\AA : \Ss_j \rightarrow \Ss_{j+1}$ is bounded for $j=0,1$.
					\item \label{assumption_dissipative} there exists $\lambda_\BB \geq \lambda_{\LL}$ such that, for $j=0,1,2$, 
					$$\forall \xi \in \R^d \ , ~ \re z > - \lambda_\BB \ , \quad \left\| z - \BB + i v \cdot \xi \right\|_{\Ss_j \to \Ss_j} \lesssim | \re z + \lambda_{\BB}|^{-1} \  .$$
				\end{enumerate}
			\end{enumerate}
		\end{hyp}
		
		In the sequel, we will denote by $\Pi$ the orthogonal projection from $\Ss$ onto $\ker(\LL)$, namely
		\begin{equation}
			\label{eq:def_Pi}
			\Pi f(v) = \left( \rho_f + u_f \cdot v + \frac{ \theta_f }{E (K-1)} \left( |v|^2-E \right) \right) \mu(v)
		\end{equation}
		where the macroscopic fields are defined as
		\begin{subequations}
			\label{eq:def_macro_fields}
			\begin{gather}
				\rho_f = \int_{ \R^d } f(v) \d v = \la f , \mu \ra_\Ss \, , \\
				u_f = \frac{d}{E} \int_{ \R^d } v f(v) \d v = \frac{d}{E} \la f , v \mu \ra_\Ss \, , \\
				\theta_f = \frac{1}{ E }\int_{ \R^d } f(v) \left( |v|^2-E \right) \d v = \frac{1}{E} \la f , \mu \ra_\Ss \, ,
			\end{gather}
		\end{subequations}
		and we will also denote the microscopic part of $f$ as
		\begin{equation}
			\label{eq:def_Pi_perp}
			f^\perp =: f - \Pi f \, .
		\end{equation}
		
		We now present the assumptions on the bilinear part of the collision operator.
		
		\begin{hyp}\label{AsB} The bilinear operator $\QQ$ satisfies the following assumptions:
			\begin{enumerate}[label=\hypst{B\arabic*}]
				\item \label{Bortho} The bilinear operator is $\Ss$-orthogonal to the null-space of $\LL$:
				$$\la \QQ(f, g), \mu \ra_{\Ss} = \la \QQ(f, g), v \mu \ra_{\Ss} = \la \QQ(f, g), |v|^2 \mu \ra_{\Ss} = 0,$$
				or, equivalently, in terms of integrals:
				$$\int_{\R^d} \QQ(f, g)(v) \d v = \int_{\R^d} v \QQ(f, g)(v) \d v = \int_{\R^d} |v|^2 \QQ(f, g)(v) \d v = 0.$$
				\item \label{Bisotrop} The bilinear operator commutes with orthogonal matrices:
				$$\la \QQ(f, g), h \ra_{\Ss} = \la \QQ\left( \Theta f, \Theta g \right), \Theta h \ra_{\Ss},$$
				for any orthogonal matrix $\Theta \in \MMM_{d \times d}(\R).$
				\item \label{Bbound} The bilinear operator satisfies the following dual estimate
				$$\| \QQ(f, g) \|_{\Ssm} \lesssim \| f \|_{\Ss} \| g \|_{\Ssp} + \| f \|_{\Ssp} \| g \|_{\Ss} \, .$$
			\end{enumerate}
		\end{hyp}
		
		Compared to the work \cite{GL24}, we add an extra assumption concerning the nonlinear remainder.
		
		\begin{hyp}\label{AsN} The nonlinear remainer $\RR^\eps$ is such that for any regularity index $s > \frac d2$ and some $\boldsymbol{\alpha} \in \left( \frac12, 1\right)$ if $d=2$ or $\boldsymbol{\alpha}=1$ if $d=3$, the following holds for any $f, g \in \rSSs{s}$ satisfying $ \| (f, g) \|_{ \rSSsp{s} }\le \delta \ll 1$
			\begin{enumerate}[label=\hypst{N\arabic*}]
				\item \label{AsN_ortho} The orthogonality property for any $x \in \R^d$
				$$\la \RR^\eps[f](x) , \mu \ra_\Ss = \la \RR^\eps[f](x) , v \mu \ra_\Ss =\la \RR^\eps[f](x) , |v|^2 \mu \ra_\Ss =0 \, .$$
				\item \label{AsN_bound} The uniform bound
				$$\| \RR^\eps[ f ] \|_{ \rSSsm{s} \cap \rhSSsm{1-\boldsymbol{\alpha}} } \lesssim \left\| | \nabla_x |^{ \boldsymbol{\alpha} } f \right\|_{ \rSSsp{s-\boldsymbol{\alpha}} } \| f \|_{\rSSs{s}} $$
				
				\item \label{AsN_Lip} The Lipschitz estimate
				\begin{align*}
					\| \RR^\eps[ f ] - \RR^\eps[ g ] \|_{\rSSsm{s} \cap \rhSSsm{1-\boldsymbol{\alpha}}} \lesssim & \left\| (f, g) \right\|_{\rSSs{s}} \left\| | \nabla_x |^{\boldsymbol{\alpha}} (f - g) \right\|_{ \rSSsp{s - \boldsymbol{\alpha}} }  \\
					& + \left\| | \nabla_x |^{\boldsymbol{\alpha}} (f, g) \right\|_{ \rSSsp{s-\boldsymbol{\alpha}} } \left\| f - g \right\|_{ \rSSs{s} } \ .
				\end{align*}
			\end{enumerate}
		\end{hyp}
		
		\begin{rem}
			The role of $\boldsymbol{\alpha}$ is to control small frequencies. We assume in Theorem \ref{thm:main} that the initial data lies in $\dot{\H}^{1-\boldsymbol{\alpha}}_x$, which is is a more flexible assumption than it lies in $L^p_x$ as made in \cite{GT20}.
		\end{rem}
		
		When $\RR$ is a trilinear (or more) operator, a sufficient condition for it to satisfy Assumptions \ref{AsN} is given below. The proof is postponed to Proposition \ref{prop:AsN_mult_suff}.
		\begin{hyp}
			Assume the operator $\RR$ is $N$--linear with $N \ge 3$ and local in $x$, then it satisfies \ref{AsN_bound} and \ref{AsN_Lip} under the following sufficient condition:
			\begin{enumerate}[label=\hypst{N'}]
				\item \label{AsN_suff} For any $( f_n )_{n=1}^N \subset \Ssp$, there holds
				$$\| \RR[f_1, f_2, \dots, f_N] \|_{ \Ssm } \lesssim  \left\| \overline{f}_1 \right\|_{ \Ssp } \prod_{n=2}^{N} \left\| \overline{f}_n \right\|_{ \Ss } \, ,$$
				where the right hand side is a sum over all permutation $(\overline{f}_1, \dots, \overline{f}_N )$ of $(f_1, \dots, f_N)$.
			\end{enumerate}
		\end{hyp}
	
		\subsection{Statement of the main result}
		
			We now state the main result of this paper, and in the following subsections we will present the models to which the abstract hydrodynamic limit Theorem \ref{thm:main} applies.
					
			\medskip
			We will state our hydrodynamic limit result which deals with small ill-prepared initial data. To this end, we split the initial data $f_\ini$ as
			$$f_\ini = \Pi f_\ini + f_\ini^\perp = f_{\ini, \wp} + f_{\ini, \ip} + f^\perp_\ini$$
			where the \textit{well-prepared part} $f_{\ini, \wp}$ and \textit{ill-prepared part} $f_{\ini, \ip}$ of $\Pi f_\ini$ write
			$$\mu(v)^{-1} f_{\ini, \star}(x, v) = \rho_{\ini, \star}(x) + u_{\ini, \star}(x) \cdot v + \frac{1}{E(K-1)} \theta_{\ini, \star}(x) \left( | v |^2 - E \right) $$
			with, using the notation of eq. \eqref{eq:def_macro_fields}
			$$\rho_{\ini, \ip} = \frac{\rho_\ini  + \theta_\ini}{K} \, , \quad \theta_{\ini, \ip} = \left(1-\frac{1}{K} \right) (\rho_\ini+\theta_\ini) \ , \quad u_{\ini, \ip} = | \nabla_x |^{-1} ( \nabla_x \cdot u_\ini ) \ ,$$
			as well as, denoting Leray's projector on incompressible fields by $\mathbb{P}$
			$$\rho_{\ini, \wp} = \rho_\ini - \rho_{\ini, \ip} \ , \qquad \theta_{\ini, \wp} = - \rho_{\ini, \wp} \ ,  \qquad u_{\ini, \wp} = u_\ini - u_{\ini, \ip} = \mathbb{P} u_\ini\ . $$
			Since $\|(\rho_{\ini, \wp}, u_{\ini, \wp}, \theta_{\ini, \wp}) \|_{\H^s} \lesssim \| f_{\ini, \wp} \|_{ \rSSs{s}} \lesssim \| f_\ini \|_{ \rSSs{s}} \ll 1$, we may also consider the unique global solution $(\rho_\ns, u_\ns, \theta_\ns) \in L^\infty_t \H^s_x$ spanned by $(\rho_{\ini, \wp}, u_{\ini, \wp}, \theta_{\ini, \wp})$ to the incompressible Navier-Stokes-Fourier system (see \cite{LR23}):
			\begin{equation}
				\label{eq:NSF}
				\begin{cases}
					\partial_t u_\ns + \vartheta_ \Inc u_\ns \cdot \nabla u_\ns = \kappa_\Inc \Delta u_\ns - \nabla p_\ns \, , & \nabla \cdot u_\ns = 0 \ , \\
					\partial_t \theta_\ns + \vartheta_\Bou u_\ns \cdot \nabla \theta_\ns = \kappa_\Bou \Delta \theta_\ns \, , & \rho_\ns + \theta_\ns = 0 \ ,
				\end{cases}
			\end{equation}
			which is such that $(\nabla \rho_\ns, \nabla u_\ns, \nabla \theta_\ns) \in L^2_t \H^s_x$. 
			
			\begin{theo}[Abstract hydrodynamic limit]
				\label{thm:main}
				Suppose that Assumptions \ref{AsL}, \ref{AsB}, \ref{AsN} hold and fix some regularity index $s > \frac{d}{2}$, there exists a small $\delta > 0$ such that the following holds for any initial data $f_\ini \in \rSSs{s}$ satisfying
				$$\| f_\ini \|_{ \rSSs{s} \cap \rhSSsm{1-\boldsymbol{\alpha}} } \le \delta \quad (d = 2) \ , \qquad
					\| f_\ini \|_{ \rSSs{s} } \le \delta \quad (d \ge 3) \ ,$$
				where $\boldsymbol{\alpha}$ is that of Assumption \ref{AsN_bound}--\ref{AsN_Lip}.
				
				\medskip
				\noindent
				\textbf{(1) Existence and uniqueness of a kinetic solution} The equation \eqref{eq:perturbed_scaled} admits a global solution $f^\eps$ which is unique among those such that
				$$\sup_{t \ge 0} \| f^\eps(t) \|_{ \rSSs{s} } \le 2 \delta \quad \text{and} \quad |\nabla_x|^{ \boldsymbol{\alpha} } f^\eps \in L^2_{loc} \left( [0, \infty) ; \rSSsp{s - \boldsymbol{\alpha} } \right) \, .$$
				
				\medskip
				\noindent
				\textbf{(2) Hydrodynamic limit and initial layers} The kinetic solution $f^\eps$ writes
				$$f^\eps = f_\ns + f^\eps_\kin + f^\eps_\disp + f^\eps_\err$$
				where each term of the decomposition above is as follows:
				\begin{itemize}
					\item The limit $\mu(v)^{-1} f_\ns(t,x,v)$ writes
					\begin{equation}
						\label{eq:macro_NSF}
						\rho_\ns(t,x) + u_\ns(t,x) \cdot v + \frac{1}{E(K-1)} \theta_\ns(t,x) \left( | v |^2 - E \right) \, .
					\end{equation}
					
					\item The initial layer is spanned by the microscopic part of the initial data:
					\begin{equation*}
						\| f^\eps_\kin(t) \|_{ \rSSs{s} } \lesssim e^{-\lambda t / \eps^2} \| f_\ini^\perp \|_{ \rSSs{s} } \, .
					\end{equation*}
					
					\item The oscillating part $f^\eps_\disp$ is generated by the ill-prepared part $f_{\ini, \ip}$ of the initial data:
					$$\| f^\eps_\disp(t) \|_{ L^\infty_t \rSSs{s} } + \| | \nabla_x |^{\boldsymbol{\alpha}} f^\eps_\disp \|_{ L^2_t \rSSs{s -\boldsymbol{\alpha} }} \lesssim \| f_{\ini, \ip} \|_{ \rSSs{s} \cap \rhSSsm{\boldsymbol{\alpha}-1} } \ , $$
					and vanishes in an averaged sense:
					$$\forall p > \frac{2}{d-1} \, , \quad \lim\limits_{\eps \to 0} \| f^\eps_\disp \|_{ L^p_t L^\infty_x \Ss_v } = 0$$
					as well as uniformly away from $t=0$:
					$$\forall \delta > 0 \, , \quad \lim\limits_{\eps \to 0} \left( \sup_{ t \ge \delta } \| f^\eps(t) \|_{ L^\infty_x \Ss_v } \right) = 0 \, .$$
					
					\item The error term $f^\eps_\err$ vanishes uniformly in time:
					$$\lim\limits_{\eps \to 0}  \| f^\eps_\err \|_{ L^\infty_t \rSSs{s} } = 0 \, .$$
				\end{itemize}
				If the macroscopic part of the initial data is well-prepared (i.e. $f_{\ini, \ip} = 0$ and thus~$f^\eps_\disp = 0$) then one has the quantitative estimate for $\delta \in (0, 1]$
				$$\| f^\eps_\err \|_{ L^\infty_t \rSSs{s} } \lesssim \eps^\delta \|f_\ini \|_{ \rSSs{s+\delta} } \, ,$$
				and in the ill-prepared case (i.e. $f_{\ini, \ip} \neq 0$) for $\delta \in \left[0, \frac12\right]$ and $\eta>0$
				$$\| f^\eps_\err(t) \|_{ L^\infty_t \rSSs{s} } \lesssim \eps^\delta \left( \| f_\ini \|_{ \rSSs{s+\delta} } + \| f_{\ini, \ip} \|_{ \dot{\mathbb{B}}^{ s + \frac{d+1}{2} + \eta}_{1,1} \cap \H^s } \right) \, ,$$
				$$\| f^\eps_\disp(t) \|_{ L^\infty_x \Ss_v } \lesssim  1 \land \left( \frac{\eps}{t} \right)^{ \frac{d-1}{2} } \| f_{\ini, \ip} \|_{ \dot{\mathbb{B}}^{ \frac{d+1}{2} }_{1,1} \cap \H^s } \, ,$$
				where $\mathbb{B}$ denotes the usual Besov space (see for instance \cite[Section 2]{BCD}).
			\end{theo}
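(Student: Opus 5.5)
The proof follows the spectral/semigroup strategy of \cite{GL24}, now adapted to the non-bilinear remainder $\RR^\eps$. We write \eqref{eq:perturbed_scaled} in Duhamel form,
$$f^\eps(t) = U^\eps(t) f_\ini + \frac{1}{\eps}\int_0^t U^\eps(t-\tau)\QQ(f^\eps,f^\eps)(\tau)\,\d\tau + \int_0^t U^\eps(t-\tau)\RR^\eps[f^\eps](\tau)\,\d\tau ,$$
where $U^\eps(t)=e^{t(\LL-\eps v\cdot\nabla_x)/\eps^2}$.

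\textbf{Step 1: spectral description of $U^\eps$.} From Assumptions \ref{AsL} and the hierarchy \ref{L4}, we obtain, uniformly in $\eps$, the splitting of $U^\eps$ in Fourier variables $\xi$. For $|\eps\xi|$ small, $U^\eps$ is the sum of $d+2$ hydrodynamic eigenmodes,
$$e^{t(\pm i c|\xi|/\eps - \kappa_\Wave|\xi|^2 + O(\eps|\xi|^3))}$$
(acoustic waves), plus $e^{-t(\kappa_\Inc|\xi|^2+O(\eps^2|\xi|^4))}$ and $e^{-t(\kappa_\Bou|\xi|^2+\dots)}$ (incompressible and Boussinesq modes). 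The remainder decays like $e^{-\lambda t/\eps^2}$, and this remainder generates $f^\eps_\kin$ once it is applied to $f_\ini^\perp$. The wave modes act only on the ill-prepared part and give $f^\eps_\disp$. Their dispersive decay $(\eps/t)^{(d-1)/2}$ in $L^\infty_x$ is a standard stationary-phase/Strichartz estimate for the half-wave group $e^{\pm i c t|\nabla|/\eps}$. Integrating it in time gives the $L^p_tL^\infty_x$ convergence for $p>\frac{2}{d-1}$ and the uniform convergence on $t\ge\delta$.

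\textbf{Step 2: the fixed point.} The key ingredient is a multiscale energy space in which we close a contraction. Its norm controls
$$\|f\|_{L^\infty_t\rSSs{s}},\qquad \||\nabla_x|^{\boldsymbol\alpha}\Pi f\|_{L^2_t\rSSs{s-\boldsymbol\alpha}},\qquad \eps^{-1}\|f^\perp\|_{L^2_t\rSSsp{s}},$$
together with a $\dot{\H}^{1-\boldsymbol\alpha}$ low-frequency piece when $d=2$. We need two linear estimates. First, the free evolution $U^\eps(t)f_\ini$ is bounded in this norm; this follows from hypocoercivity, i.e. from the spectral gap \ref{L3} combined with a macro–micro commutator argument. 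Second, the Duhamel operator maps sources in $L^2_t\rSSsm{s}\cap\dot{\H}^{1-\boldsymbol\alpha}$ into the space, with a gain of $\eps$ on sources that are orthogonal to $\nul(\LL)$. Orthogonality \ref{Bortho} and \ref{AsN_ortho} provides exactly this gain: the factor $\eps^{-1}$ in front of $\QQ$ is compensated, because on $\nul(\LL)^\perp$ the semigroup behaves like $\eps^2\LL^{-1}$ at the parabolic scale. The bilinear term is then bounded through \ref{Bbound} and product estimates in $\H^s$, $s>d/2$. The remainder is bounded directly by \ref{AsN_bound} and \ref{AsN_Lip}, which are designed so that $\||\nabla_x|^{\boldsymbol\alpha}f\|_{L^2\rSSsp{s-\boldsymbol\alpha}}\|f\|_{L^\infty\rSSs s}$ appears. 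Smallness $\delta$ then gives a contraction, and hence existence and uniqueness in the stated class.

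\textbf{Step 3: convergence and the main obstacle.} We compare $\Pi f^\eps$ with $f_\ns$. Projecting the Duhamel formula onto the incompressible and Boussinesq modes, the limit $\eps\to0$ of $\eps^{-1}U^\eps\QQ$ restricted to hydrodynamic fields yields the NSF bilinear terms $\vartheta_\Inc\,u\cdot\nabla u$ and $\vartheta_\Bou\,u\cdot\nabla\theta$. This relies on isotropy, \ref{L1} and \ref{Bisotrop}, which reduces the relevant tensors to the scalars $\vartheta_\Inc$ and $\vartheta_\Bou$. The differences are controlled as follows:
\begin{itemize}
\item The linear differences are $O(\eps^\delta)$ at the cost of $\delta$ derivatives, coming from the $O(\eps|\xi|)$ corrections in the eigenvalues and eigenprojectors.
\item The remainder $\RR^\eps$ contributes $o(1)$; we use that $\RR^\eps[\eps\varphi]$ is at least cubic, or a density argument combined with the uniform bounds.
\item The interactions between wave modes and the other modes are handled by non-stationary phase/normal form estimates in the well/ill-prepared cases. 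This is where the Besov norm $\dot{\mathbb B}^{s+\frac{d+1}{2}+\eta}_{1,1}$ appears.
\end{itemize}
A Gronwall/stability argument for NSF in the same norm then gives $f^\eps_\err\to0$.

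The main obstacle is the uniform-in-$\eps$ Duhamel estimate in the multiscale space with the low-frequency $\dot{\H}^{1-\boldsymbol\alpha}$ control in $d=2$. This is the step where the non-bilinear remainder must be absorbed. I would first prove it frequency by frequency from the resolvent bounds in \ref{L4}, using Plancherel in time.
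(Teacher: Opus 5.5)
\textbf{Gap 1: the convergence step.} You run the contraction on $f^\eps$ itself, in a norm that contains $\eps^{-1}\|f^\perp\|_{L^2_t\rSSsp{s}}$. You then assert that a stability argument for NSF ``in the same norm'' gives $f^\eps_\err\to0$. But $f^\eps_\err$ does not tend to zero in that norm.

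Since $f_\ns$ is purely macroscopic, $f^\eps_\err$ contains the microscopic part of the hydrodynamic evolution, for instance:
\begin{itemize}
\item the Chapman--Enskog corrector $(U^\eps_\ns(t)f_\ini)^\perp\approx\eps\,\LL^{-1}(\id-\Pi)\big[(v\cdot\nabla_x)U_\ns(t)f_\ini\big]$;
\item the kinetic Duhamel piece $\Psi^\eps_\kin[f_\ns,f_\ns]\approx-\eps\,\LL^{-1}\QQ(f_\ns,f_\ns)$.
\end{itemize}
Both are of exact order $\eps$, not $o(\eps)$. Their $\eps^{-1}L^2_t\rSSsp{s}$ norm therefore stays of size $\delta$ as $\eps\to0$. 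If instead you only mean to compare $\Pi f^\eps$ with $f_\ns$ along the NSF modes, you give no argument that the microscopic part of $f^\eps_\err$ vanishes in $L^\infty_t\rSSs{s}$, and that is part of the claim.

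The missing device is the paper's:
\begin{itemize}
\item It makes the ansatz $f^\eps=f_\ns+f^\eps_\kin+f^\eps_\disp+g^\eps$ and runs the contraction on the error $g^\eps$. The space is the sum space $\sumsp{s}=\rSSSh{s}+\rSSSk{s}$, whose hydrodynamic component puts no $\eps^{-1}$ weight on microscopic parts.
\item In that space every source except $\SS^\eps_0=\Psi^\eps_\kin[f_\ns]$ is $O\big(\eps^{r-s}+\beta(f_{\ini,\ip},\eps)\big)$. The linear operator $\Phi^\eps$ has norm $O(\delta)$. The remainder enters as $\eps\Lambda^\eps$ with $\Nt{\Lambda^\eps[g]}_{\sumsp{s}}\lesssim\Nt{g}_{\sumsp{s}}^2$.
\item $\SS^\eps_0$ is merely bounded in $\sumsp{s}$, so it is subtracted. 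One estimates $g^\eps-\SS^\eps_0$ through $[1-C(\delta+R)]\Nt{g^\eps-\SS^\eps_0}_{\sumsp{s}}\le\Nt{\text{r.h.s.}}_{\sumsp{s}}$. Separately, $\SS^\eps_0$ is shown to be $O(\eps)$, but only in the weaker norm $L^\infty_t\rSSs{s}$.
\end{itemize}
Existence and convergence thus come out of one and the same estimate, and convergence is obtained in exactly the norm where it is true.

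\textbf{Gap 2: uniqueness.} A contraction yields uniqueness only inside its ball, i.e.\ among solutions satisfying your global multiscale bounds. The theorem claims uniqueness among all solutions with $\sup_t\|f^\eps(t)\|_{\rSSs{s}}\le2\delta$ and merely $|\nabla_x|^{\boldsymbol{\alpha}}f^\eps\in L^2_{loc}([0,\infty);\rSSsp{s-\boldsymbol{\alpha}})$. Such a solution is not known a priori to lie in that ball, so ``hence uniqueness in the stated class'' does not follow.

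The paper proves this separately. It performs an $\rSSs{s}$ energy estimate on $h^\eps=f^\eps_1-f^\eps_2$, using the bilinear bound for $\QQ$, \ref{AsN_Lip} and Young's inequality. The resulting differential inequality has coefficient $\eps^{-2}+\||\nabla_x|^{\boldsymbol{\alpha}}(f^\eps_1,f^\eps_2)\|^2_{\rSSsp{s-\boldsymbol{\alpha}}}$, which is only locally integrable in time. That is enough for Gronwall to give $h^\eps\equiv0$.
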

			
			\begin{rem}
				A practical criterion for the initial data to be in $\rhSSsm{-\sigma}$ is that it lies in $L^p_x(\Ss_v)$ for some $1 < p < 2$:
				$$L^p_x ( \Ss_v ) \subset \rhSSs{-\sigma} \subset \rhSSsm{-\sigma} \quad \text{with} \quad \sigma = d \left(\frac1p - \frac12 \right) \ ,$$
				indeed, the $L^2$ structure of $\rhSSs{-\sigma}$ implies
				$$\| f \|_{ \rhSSs{-\sigma} }^2 = \int_{\R^{2d}} | \widehat{f}(\xi, v) |^2 \mu(v)^{-1} | \xi |^{-2\sigma} \d v \d \xi = \| f \|_{ L^2_v \dot{\H}^{-\sigma}_x(\mu^{-1}) }^2 $$
				thus, the embedding $L^p \subset \dot{\H}^{-\sigma}$ followed by Minkowski's inequality yield
				$$\| f \|_{ \rhSSs{-\sigma} } \lesssim \| f \|_{ L^2_v L^p_x(\mu^{-1}) } \lesssim  \| f \|_{ L^p_x (\Ss_v) } \ .$$
			\end{rem}
			
			\begin{rem}
				The strategy could be adapated to consider large initial data $f_\ini$. The kinetic solution $f^\eps$ would then be constructed for $\eps \le \eps_0(f_\ini)$ on any time interval $[0, T_{\text{max}} - \delta]$ where $T_{\text{max}} > 0$ is the maximal time of smoothness of the Navier-Stokes-Solution $(\rho_\ns, u_\ns, \theta_\ns)$. In particular, $T_{\text{max}} = \infty$ for $d=2$, thus the kinetic solution would be global. This approach has been adopted in \cite{GT20, GL24, CGT26} and we chose not to in the present work to avoid making the functional framework more complicated.
			\end{rem}
	
			\begin{rem}
				By asking more structure on the homogeneous steady states of the equation \eqref{eq:generic_kinetic_equation}, one may compute the transport coefficients $\vartheta_\star$ in \eqref{eq:NSF} by adapting the computations \cite[(57)--(62)]{BGLI91}. In particular, when these steady states are Maxwellians (as is the case for the Fokker-Planck and BGK models), then $\vartheta_\star = 1$.
			\end{rem}
	
	In the following sections, we will present various models which fall within the framework of Theorem \ref{thm:main} as will be proved in Sections \ref{scn:NFP}--\ref{scn:BFD}.

	\subsubsection{The BGK equation}
	\label{scn:BGK_intro}
	
	The Boltzmann collision operator is particularly delicate to compute numerically, for this reason an alternative operator, called the \textit{BGK operator}, was proposed in \cite{BGK54}. It satisfies the same conservation and entropy dissipation laws as the original operator while being much simpler:
	\begin{equation}
		\label{eq:def_BGK}
		\CC_{\text{BGK}}[F] = -( F -\MM[F] ) \ ,
	\end{equation}
	with $\MM[F]$ being the thermodynamic equilibrium with the same macroscopic fields as $F$:
	\begin{equation}
		\label{eq:local_maxwellian}
		\MM[F] = \frac{R }{ (2\pi T)^{d/2} } \exp \left( - \frac{|v-U|^2}{2 T} \right)
	\end{equation}
	where $R$, $U$ and $T$ are the mass density, velocity and temperature:
	\begin{gather}
		\label{eq:macro_quantities}
		R = \int F \, \d v \, , \qquad U = \frac{1}{R} \int v F \, \d v \, , \qquad T = \frac{1}{d R} \int |v-U|^2 F \, \d v \, .
	\end{gather}
	We know that this model has global solutions in $L^1$ \cite{P89} from which the incompressible Navier-Stokes-Fourier system was derived \cite{SR03}. In our work, we quantify this convergence and describe the initial layers.

	A well known limitation of this model is that it yields a wrong Prandtl number when deriving the (compressible) Navier-Stokes equations, so an alternative model, called the \textit{Gaussian} or \textit{Ellipsoidal-Statistical-BGK operator}, was proposed in \cite{ALTPP00}: 
	\begin{equation}
		\label{eq:def_ES_BGK}
		\CC_{\text{ES-BGK}}[F] = -(F - \GG[F]) \ .
	\end{equation}
	This time, the distribution $\GG[F]$ is a anisotropic gaussian centered around $U$ given by
	$$\GG[F] = \frac{R}{ \sqrt{ \det (2\pi \TT) } } \exp \left( - \frac12 (v-U) \TT^{-1} (v-U) \right)$$
	where the matrix $\TT$ is defined for some $\nu \in \left( - \frac{1}{2} , 1 \right)$ as
	$$\TT = (1-\nu) T \ \text{id} + \frac{\nu}{R} \int_{ \R^d } (v-U)^{\otimes 2} F \ \d v \ .$$
	Since proving that the BGK model \eqref{eq:def_BGK} falls within our framework already requires tedious computations, we treat the case of the ES-BGK model. However, because of their similar structure, we claim that this model is contained in our framework as well.
		
	\subsubsection{The nonlinear Fokker-Planck equation}
	
	We consider the \textit{nonlinear Fokker-Planck equation}, introduced in \cite{V02}, where the collision operator is the Fokker-Planck operator associated to the local Maxwellian $\MM[F]$ with the same macroscopic fields as $F$:
	\begin{equation}
		\label{eq:def_NFP}
		\CC_{\text{NFP}}[F] = T \nabla_v \cdot \left( \MM[F] \nabla_v \left( \frac{F}{\MM[F] } \right) \right) = T \Delta_v F + \nabla_v \cdot \left(  v F - U F\right) \ .
	\end{equation}
	This equation is well understood in the case of a constant velocity and temperature $U=0, T=1$, that is to say in the linear case. When the temperature is constant, this model is also known as the \textit{flocking model} \cite{C16, MT11} whose diffusive limit yields the incompressible Navier-Stokes system \cite{CJ24}. The literature dealing with non-constant velocity and temperature is much more scarce, although we know this model admits global weak solutions \cite{CHY25} and its equilibria are exponentially stable \cite{LY21}. Up to our knowledge, this manuscript presents the first derivation of the incompressible Navier-Stokes-Fourier system for this model.

	As for the BGK model, a variant called the \textit{Ellipsoidal-Statistical-Fokker-Planck model} was introduced in \cite{MM16}:
	$$\CC_{\text{ES-NFP}}[F] = \nabla_v \cdot \left( \TT \GG[F] \nabla_v \left( \frac{F}{\GG[F] } \right) \right) = \nabla_v \cdot \left( \TT \nabla_v F + (v-U) F \right) \ .$$
	We also claim that our framework allows to deal with this model.
	
	\subsubsection{The Boltzmann-Fermi-Dirac equation}
	We consider the \textit{Boltzmann-Fermi-Dirac equation} for hard potential interactions with an angular cutoff assumption
	\begin{equation}
		\label{eq:def_quantum_Boltzmann}
		\CC[F] = \CC^+[F] - \CC^-[F] 
	\end{equation}
	where the gain and loss terms are defined as
	\begin{gather*}
		\CC^+[F] = \int_{ \R^d_{v_*} \times \S^{d-1}_\sigma }  B( v - v _*, \sigma) F' F'_* (1-F) (1-F_*)  \d \sigma \d v_* \, , \\
		\CC^-[F] =  \int_{ \R^d_{v_*} \times \S^{d-1}_\sigma }  B( v - v _*, \sigma)  F F_* (1-F') (1-F_*')  \d \sigma \d v_* \, .
	\end{gather*}
	We have used the usual shorthand notation
	$$F' = F(v') \, , \qquad F'_* = F(v'_*) \, , \qquad F_* = F(v_*) \, ,$$
	where the pre-collisional velocities $(v', v'_*)$ are given by the formulae
	$$v' = \frac{v+v_*}{2} + \sigma \frac{|v-v_*|}{2} \, , \qquad v'_* = \frac{v+v_*}{2} - \sigma \frac{|v-v_*|}{2} \, .$$
	The collision kernel $B$ can be assumed to be of the form
	$$B(v-v_*, \sigma) = | v - v_*|^\gamma \, b\left( \sigma \cdot \frac{v-v_*}{|v-v_*|} \right)$$
	for some $\gamma\in [0, 1]$ and some measurable non-negative function that satisfies a \textit{cutoff assumption}:
	$$\forall u \in \S^{d-1} \, , \quad 0 < \int_{ \S^{d-1} } b( u \cdot \sigma ) \,  \d \sigma < \infty \, .$$
	Global solutions are known to exist in $L^1 \cap L^\infty$ \cite{D94} and its hydrodynamic limit in weighted $L^2$ spaces was derived using compactness arguments \cite{JXZ22}. In our work, we quantify this convergence and describe the initial layers.
	
	\subsection{Idea of the proof}
	
	The spectral study \cite{GL24} highlights that a relevant space to capture the multiscale nature of the equation is $\sumsp{s}_\eps = \rSSSk{s}_\eps + \rSSSh{s}$, where the ‘‘kinetic regime'' space $\rSSSk{s}_\eps$ corresponds to the energy estimate of $\partial_t h = \eps^{-2} \LL h$, and the ‘‘hydrodynamic regime'' $\rSSSh{s}$ corresponds to the energy estimate of $\partial_t h = \Delta_x h$. We then consider the kinetic equation \eqref{eq:perturbed_scaled} in its Duhamel formulation, substract the \textit{a priori} Navier-Stokes-Fourier limit \eqref{eq:macro_NSF} and the initial layers (kinetic and acoustic). We are then left with an integral equation on the error where the linear and non-linear operators are bounded in $\sumsp{s}_\eps$. We prove existence of a solution by a contraction argument and then quantify its rate of convergence.
		
	\subsection{Outline of the paper}
	
	In Section \ref{scn:hydro_limit}, we prove the abstract hydrodynamic limit Theorem \ref{thm:main}. We first present the functional spaces, then in Section \ref{scn:reminders} we present reminders of the previous work \cite{GL24}, including continuity estimates for the (non-)linear operators, and reformulate them in terms of our functional spaces, finally we prove the abstract hydrodynamic limit Theorem \ref{thm:main} in Section \ref{scn:proof_hydro_limit}.
	
	In the next sections, we prove that our framework applies to several non-linear collisional models, namely the nonlinear Fokker-Planck model \eqref{eq:def_NFP} in Section \ref{scn:NFP}, the BGK model \eqref{eq:def_BGK} in Section \ref{scn:BGK}, and the Boltzmann-Fermi-Dirac model \eqref{eq:def_quantum_Boltzmann} in Section \ref{scn:BFD}.
	
	In the appendix, we present Taylor expansions of the macroscopic quantities \eqref{eq:macro_quantities} and of the local Maxwellian \eqref{eq:local_maxwellian}, and then prove some nonlinear estimates in Sobolev spaces.
	
	\section{The abstract hydrodynamic limit theorem}
		\label{scn:hydro_limit}
	
		We introduce the time-position-velocity spaces in which we will work:
		For ‘‘kinetic'' functions, we consider
		$$\rSSSk{s} = \rSSSk{s}_\eps = \left\{ f \in \CC_b( [0, \infty) ; \rSSs{s} ) \, : \, \Nt{f}_{ \rSSSk{s} } < \infty \right\}$$
		endowed with the norm
		$$\Nt{f}_{ \rSSSk{s} }^2 = \sup_{t \ge 0} \| f(t) \|_{\rSSs{s}}^2 + \frac{1}{\eps^2} \int_0^\infty \| f(t) \|_{ \rSSsp{s} }^2  \d t$$
		For ‘‘hydrodynamic'' functions, we consider
		$$\rSSSh{s} = \left\{ f \in \CC_b( [0, \infty) ; \rSSs{s} ) \, : \, \Nt{f}_{ \rSSSh{s} } < \infty \right\}$$
		endowed with the norm
		$$\Nt{f}_{ \rSSSh{s} }^2 = \sup_{t \ge 0} \| f(t) \|_{\rSSsp{s}}^2 + \int_0^\infty \left\| | \nabla_x |^{ \boldsymbol{\alpha} } f(t) \right\|_{ \rSSsp{s-\boldsymbol{\alpha}} }^2  \d t$$
		where $\boldsymbol{\alpha}$ is that of Assumption \ref{AsN}. Note that, under the smallness assumption $\| f_\ini \|_{ \rSSs{s} } \le \delta \ll 1$, the hydrodynamic limit $f_\ns$ lies in $\rSSSh{s}$ and
		\begin{equation}
			\label{eq:nsf_hydro_regime_est}
			\| f_\ns \|_{ \rSSSh{s} } \lesssim \| \Pi f_\ini \|_{ \rSSs{s} } + \| \Pi f_\ini \|_{ \dot{\H}_x^{ \boldsymbol{\alpha} - 1 } ( \Ssm_v )  } \lesssim \delta \, .
		\end{equation}
		Finally we define the sum of these two spaces
		$$\sumsp{s} = \sumsp{s}_\eps = \rSSSh{s} + \rSSSk{s}_\eps $$
		endowed with the norm
		$$\| f \|_{ \sumsp{s} } = \inf \left\{ \|f_1\|_{ \rSSSh{s} } + \| f_2 \|_{ \rSSSk{s} } \, : \, (f_1,f_2) \in \rSSSh{s} \times \rSSSk{s} , \, f = f_1 + f_2 \right\} \, .$$
		Note that we have the comparison
		$$\sup_{ t \ge 0 } \| f(t) \|_{ \rSSs{s} } \lesssim \Nt{ f }_{ \sumsp{s} }$$
		uniformly in $\eps$.
		
		\subsection{Reminders and extension of the abstract theory}
		\label{scn:reminders}
		
		We consider the projectors on the different modes of the linearized equation \eqref{eq:perturbed_scaled}, introduced in \cite[Section 2.3]{GL24} which converge as $\eps \to 0$:
		$$\id = P^\eps_{\hyd} + P_\kin^\eps \ , \qquad P_\hyd^\eps = P_\ns^\eps + P^\eps_\Wave \ .$$
		Note that, at $\eps=0$, these projectors provide the decomposition of the initial data into its well-prepared part and ill prepared parts defined before Theorem \ref{thm:main}:
		$$f_{\ini, \wp} = P^0_\ns f_\ini \ , \qquad f_{\ini, \ip} = P^0_\Wave f_\ini \ , \qquad f_\ini^\perp = P^0_\kin \ .$$
		We also consider the semigroup $U^\eps$ generated by $\eps^{-2} \left( \LL - \eps v \cdot \nabla_x \right)$ and its splitting induced by the previous projectors:
		$$U^\eps = U_\hyd^\eps + U^\eps_\kin \, , \qquad U^\eps_\hyd = U^\eps_\ns + U^\eps_\Wave \, , \qquad U^\eps_\star = U^\eps P_\star^\eps \ ,$$
		as well as the dispersive semigroup introduced in \cite[Definition 2.8]{GL24}, which is an approximation of $U^\eps_\Wave$:
		$$U^\eps_\disp = U^\eps_\disp P_\Wave^0 \ .$$
		Finally, we consider the Duhamel bilinear term $\Psi^\eps$ defined as
		$$\Psi^\eps[f, g](t) = \frac{1}{\eps} \int_0^t U^\eps(t-\tau) \QQ(f(\tau), g(\tau)) \d \tau$$
		which follows a splitting induced by that of $U^\eps$:
		$$\Psi^\eps = \Psi^\eps_\hyd + \Psi^\eps_\kin \, , \qquad \Psi^\eps_\hyd = \Psi^\eps_\ns + \Psi^\eps_\Wave \ .$$
		As stated in \cite[Proposition 2.5]{GL24}, a kinetic distribution $f \in L^\infty_t \rSSsp{s}$ such that $\nabla_x f \in L^2 \rSSsp{s}$ describes a solution of Navier-Stokes-Fourier in the sense of \eqref{eq:NSF}--\eqref{eq:macro_NSF} if and only if it satisfies the integral equation
		\begin{equation}
			\label{eq:integral_NSF}
			f(t) = U_\ns(t) f(0) + \Psi_\ns[f](t)
		\end{equation}
		where $U_\ns = \lim\limits_{\eps \to 0} U^\eps_\ns$ and $\Psi_\ns=\lim\limits_{\eps \to 0} \Psi^\eps_\ns$ are defined in \cite[Definition 2.4]{GL24}.
		
		\medskip
		We now recall the main estimates for $U^\eps_\star$ and $\Psi^\eps_\star$ established in \cite{GL24}.
		
		\begin{prop}[Linear and bilinear estimates]
			One has for $\star=\ns, \Wave, \disp$
			\begin{equation}
				\label{eq:linear_semigroup_est}
				\Nt{ U^\eps_\star g }_{ \rSSSh{s} } \lesssim \| g \|_{ \rSSs{s} } + \| g \|_{ \rhSSsm{1-\boldsymbol{\alpha}} } \, \qquad \Nt{ U^\eps_\kin g }_{ \rSSSk{s} } \lesssim \| g \|_{ \rSSs{s} } \, .
			\end{equation}
			Furthermore, when both arguments are in the hydrodynamic regime $\rSSSh{s}$
			\begin{equation}
				\label{eq:bilinear_from_hydro}
				\Nt{ \Psi^\eps_\hyd[ f , g ] }_{ \rSSSh{s} } + \Nt{ \Psi^\eps_\kin[ f , g ] }_{ \rSSSk{s} } \lesssim \Nt{ f }_{ \rSSSh{s} } \Nt{ f }_{ \rSSSh{s} } \ ,
			\end{equation}
			and when $f$ (for instance) is in the kinetic regime then
			\begin{equation}
				\label{eq:bilinear_from_kinetic}
				\Nt{ \Psi^\eps_\hyd[ f , g ] }_{ \rSSSh{s} } + \Nt{ \Psi^\eps_\kin[ f , g ] }_{ \rSSSk{s} } \lesssim \eps \, \Nt{ f }_{ \rSSSk{s} } \min\left\{ \Nt{ g }_{ \rSSSh{s} } , \Nt{ g }_{ \rSSSk{s} } \right\} \ .
			\end{equation}
		\end{prop}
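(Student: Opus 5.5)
The proposition is essentially a reformulation of the estimates of \cite{GL24} in the norms of $\rSSSh{s}$ and $\rSSSk{s}$. The plan is to recall the pointwise-in-frequency bounds established there and integrate them against the weights defining these norms. I would work throughout in Fourier variables $\xi$. The semigroup splits mode by mode, and the relevant facts from \cite{GL24} are the following:
\begin{itemize}
\item On the hydrodynamic projector range, $U^\eps_\hyd(t)$ behaves like $e^{-c t|\xi|^2}$ (possibly times an oscillating factor $e^{\pm i c t|\xi|/\eps}$ for the wave part) for $|\xi|\lesssim \eps^{-1}$.
\item $P^\eps_\hyd$ maps $\Ss$ (even $\Ssm$) into the smooth finite-dimensional space near $\ker\LL$, so $\Ssp$-norms there are controlled by $\Ssm$-norms.
\item $U^\eps_\kin$ decays like $e^{-\lambda t/\eps^2}$ in $\Ss$, with the energy dissipation of $\eps^{-2}\LL$ on the microscopic part.
\end{itemize}
The same holds for the dispersive approximation $U^\eps_\disp$, which has identical symbol modulus.

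\textbf{Linear estimates \eqref{eq:linear_semigroup_est}.} For $\star\in\{\ns,\Wave,\disp\}$ I would bound $\|\widehat{U^\eps_\star g}(t,\xi)\|_{\Ssp}\lesssim e^{-ct|\xi|^2}\|\widehat g(\xi)\|_{\Ssm}$. The $\sup_t$ part of the $\rSSSh{s}$ norm is then immediate. For the time-integrated part, $\int_0^\infty|\xi|^{2\boldsymbol{\alpha}}e^{-2ct|\xi|^2}\d t\sim|\xi|^{2\boldsymbol{\alpha}-2}$. I would split this according to $|\xi|\ge1$, which gives $\la\xi\ra^{2s}$ control by $\|g\|_{\rSSs{s}}$, and $|\xi|\le1$, which gives control by $\|g\|_{\rhSSsm{1-\boldsymbol\alpha}}$; note that the norm appearing in the statement is $\dot\H^{\boldsymbol\alpha-1}$, and the sign convention should be checked against \eqref{eq:nsf_hydro_regime_est}. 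For $U^\eps_\kin$, the $\rSSSk{s}$ bound is the standard energy estimate $\frac{\d}{\d t}\|h\|^2_\Ss\le-\frac{2\lambda}{\eps^2}\|h\|_{\Ssp}^2$, which is valid on the kinetic range after using an equivalent norm as in \cite{GL24}. Integrating in time and in $\xi$ with weight $\la\xi\ra^{2s}$ yields the claim.

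\textbf{Bilinear estimates.} I would first use \ref{Bbound} together with the algebra property of $\H^s_x$ for $s>d/2$ to get
$$\|\QQ(f,g)\|_{\rSSsm{s}}\lesssim \|f\|_{\rSSs{s}}\|g\|_{\rSSsp{s}}+\|f\|_{\rSSsp{s}}\|g\|_{\rSSs{s}}.$$
Since $\QQ(f,g)$ is orthogonal to $\ker\LL$ by \ref{Bortho}, its hydrodynamic projection gains a factor $\eps|\xi|$, that is, $P^\eps_\hyd\QQ$ behaves like $\eps\,\nabla_x\cdot(\dots)$. This compensates the prefactor $\eps^{-1}$ in $\Psi^\eps$. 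The hydrodynamic part then reduces to a parabolic Duhamel estimate: a source $\nabla_x F$ with $F\in L^2_t$ produces a solution in $L^\infty_t\cap L^2_t\dot\H^1$.

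For \eqref{eq:bilinear_from_hydro}, I would distribute derivatives, placing $|\nabla_x|^{\boldsymbol\alpha}$ on one factor in $L^2_t$ and taking the other factor in $L^\infty_t$. The kinetic part $\Psi^\eps_\kin$ is an $\eps^{-1}$ source in a semigroup that gains $\eps^2$ in $L^2_t\Ssp$ via the duality $\Ssm$–$\Ssp$. This gives an $O(\eps)$ factor times $\|\QQ\|_{L^2_t\rSSsm{s}}$, which is bounded by hydrodynamic norms; for small frequencies, the $\Ssm$ bound on $\QQ$ uses one factor with $|\nabla_x|^{\boldsymbol\alpha}$.

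For \eqref{eq:bilinear_from_kinetic}, the kinetic factor satisfies $\|f\|_{L^2_t\rSSsp{s}}\le\eps\Nt{f}_{\rSSSk{s}}$. Its $L^\infty_t\rSSs{s}$ bound is combined with the $L^2_t\rSSsp{s}$ part of the other factor, which in $\rSSSh{s}$ is only available via $\sup_t\Ssp$, so I would use $f$'s $L^2_t\Ssp$ norm paired with $g$'s $L^\infty_t$. This produces the extra $\eps$.

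\textbf{Main obstacle.} The delicate step is the low-frequency bookkeeping with $\boldsymbol\alpha<1$ in $d=2$: the gain $\eps|\xi|$ must be traded for the $\dot\H^{1-\boldsymbol\alpha}$-type control without losing uniformity in $\eps$. I would first try to treat it by applying the bound $\QQ\in\rhSSsm{1-\boldsymbol\alpha}$, obtained through a product estimate in homogeneous Sobolev spaces, directly inside the parabolic Duhamel integral.
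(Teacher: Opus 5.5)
Your plan matches the paper, whose proof only identifies $\rSSSh{s}$ with the space of \cite[Definition 2.1--(3)]{GL24} (taking $\phi=0$) and quotes \cite[Lemma 4.1, (5.3a)--(5.3b), (5.13)--(5.14)]{GL24}. Your frequency-wise sketch is an unpacking of exactly those cited estimates: parabolic decay of $U^\eps_\hyd$, the $\eps|\xi|$ gain on $P^\eps_\hyd \QQ$ coming from \ref{Bortho}, the $\eps^2$ gain of the kinetic Duhamel operator in $L^2_t\Ssp$, and a Lemma~\ref{lem:bilinear_estimate_sobolev}-type product bound with $|\nabla_x|^{\boldsymbol\alpha}$ on one factor. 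One correction, which your own computation $\int_0^\infty |\xi|^{2\boldsymbol\alpha} e^{-2ct|\xi|^2}\,\d t \sim |\xi|^{2\boldsymbol\alpha-2}$ already exposes: the low-frequency space needed, both for the data and for $\QQ$ inside the parabolic Duhamel integral when $d=2$, is $\rhSSsm{\boldsymbol{\alpha}-1}$, not $\rhSSsm{1-\boldsymbol{\alpha}}$ as written in your last paragraph. A positive index gives no control of $\int_{|\xi|\le 1}|\xi|^{2\boldsymbol\alpha-2}\|\widehat{\QQ}\|_{\Ssm}^2$.
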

		The linear estimate comes from \cite[Lemma 4.1]{GL24}. The bilinear estimate where both arguments are in the hydrodynamic regime corresponds to \cite[(5.3a) and (5.14)]{GL24}, and the one where at least one argument is in the kinetic regime \cite[(5.3b) and (5.13)]{GL24}. Note that our space $\rSSSh{s}$ corresponds to that of \cite[Definition 2.1--(3)]{GL24} with $\phi = 0$ and thus $w_{\phi, \eta} = 1$.
		
		\begin{prop}[Oscillation estimates]
			There holds for $g \in \rSSsm{s} \cap \rhSSsm{1-\boldsymbol{\alpha}}$ and any $f$ in the hydrodynamic regime
			\begin{equation}
				\label{eq:psi_oscillation_est}
				\Nt{ \Psi^\eps\left[ f, U^\eps_\disp g \right] }_{ \sumsp{s} } \lesssim \beta(g, \eps) \Nt{ f }_{ \rSSSh{s} }
			\end{equation}
			where the vanishing rate $\beta$ is such that
			$$\eps \| g \|_{ \rSSs{s} } \lesssim \beta(g, \eps) \lesssim \| g \|_{ \rSSs{s} } + \| g \|_{ \rhSSsm{1-\boldsymbol{\alpha}} } \, , \qquad \beta(g, \eps) \xrightarrow{\eps \to 0} 0 \, .$$
		\end{prop}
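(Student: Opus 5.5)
The plan is to reduce this oscillation estimate to the corresponding results of \cite{GL24}, in the same way the preceding proposition was obtained. Then we recast the resulting bound in terms of the sum space $\sumsp{s}$.

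\emph{Splitting.} First split $\Psi^\eps = \Psi^\eps_\hyd + \Psi^\eps_\kin$. By definition of the norm of $\sumsp{s}$,
$$\Nt{ \Psi^\eps[f, U^\eps_\disp g] }_{ \sumsp{s} } \le \Nt{ \Psi^\eps_\hyd[f, U^\eps_\disp g] }_{ \rSSSh{s} } + \Nt{ \Psi^\eps_\kin[f, U^\eps_\disp g] }_{ \rSSSk{s} } \, ,$$
so it suffices to bound each piece separately.

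\emph{Kinetic piece.} This piece is harmless. By \eqref{eq:linear_semigroup_est}, $U^\eps_\disp g \in \rSSSh{s}$ with norm $\lesssim \| g \|_{\rSSs{s}} + \| g \|_{\rhSSsm{1-\boldsymbol{\alpha}}}$. Then \eqref{eq:bilinear_from_hydro} bounds it by a product of hydrodynamic norms. To extract a vanishing factor, I would go back to the proof of \cite[(5.14)]{GL24}: the kinetic part of the Duhamel term carries the factor $\eps^{-1}$ times a semigroup decaying like $e^{-\lambda t/\eps^2}$. This produces a gain of order $\eps$ when measured in $L^\infty_t$, and the $\eps^{-2}\int\|\cdot\|^2_{\rSSsp{s}}$ part is handled the same way, giving an $O(\eps)$ contribution. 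This term is consistent with the lower bound $\eps\|g\|_{\rSSs{s}} \lesssim \beta$.

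\emph{Hydrodynamic piece.} This is the main obstacle, and here genuine oscillation is needed. Write $U^\eps_\disp(t) g$ as a sum of acoustic modes $e^{\pm i c t |\xi| /\eps}$ times parabolic factors. The time integral of $\QQ(f, U^\eps_\disp g)$ against $U^\eps_\hyd(t-\tau)$ then becomes an oscillatory integral whose phase does not cancel at non-resonant frequencies. This is the content of the oscillation/dispersion estimates of \cite[Section 5]{GL24}, in particular the lemma treating the interaction of the Navier--Stokes and acoustic modes. I would invoke that lemma to get a bound $\beta(g,\eps)\Nt{f}_{\rSSSh{s}}$, with $\beta$ given explicitly there, for instance via an integration by parts in $\tau$ restricted to frequencies $|\xi| \ge \eps^{-1/2}$-type cutoffs, or via dispersive $L^p_tL^\infty_x$ decay of $U^\eps_\disp$.

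\emph{Properties of $\beta$.} The upper bound $\beta \lesssim \|g\|_{\rSSs{s}} + \|g\|_{\rhSSsm{1-\boldsymbol\alpha}}$ follows trivially from \eqref{eq:linear_semigroup_est} and \eqref{eq:bilinear_from_hydro}. For $\beta \to 0$, I would argue by density: for $g$ smooth with compactly supported Fourier transform away from $0$, an explicit rate $\eps^{\kappa}$ holds. The uniform upper bound, being linear in $g$, then lets us approximate general $g$. The hardest point is making sure the low frequencies are controlled by $\rhSSsm{1-\boldsymbol\alpha}$ uniformly in the approximation.
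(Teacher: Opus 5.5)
Reducing the statement to \cite{GL24} is also what the paper does: its proof is only the citation of \cite[eqs.~(5.9) and (5.19)]{GL24}, one oscillation estimate for $\Psi^\eps_\hyd$ and one for $\Psi^\eps_\kin$. The step that fails is your claim that the kinetic piece is \emph{harmless} and $O(\eps)$ by the mechanism of the proof of \cite[(5.14)]{GL24}.

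The gain ``$\eps^{-1}$ prefactor against relaxation on the time scale $\eps^2$'' controls only $\sup_t \|\cdot\|_{\rSSs{s}}$. For $h = \Psi^\eps_\kin[f,k]$ the energy estimate gives
\[
\Nt{h}^2_{\rSSSk{s}} \lesssim \int_0^\infty \| \QQ(f,k)(t) \|^2_{\rSSsm{s}} \, \d t ,
\]
and this is sharp. Whenever the source varies slowly relative to $\eps^2$, one has $h \approx \eps (-\LL)^{-1} \QQ(f,k)$. This is the relevant regime here, since $k = U^\eps_\disp g$ oscillates on the time scale $\eps$. Then $\eps^{-2} \int_0^\infty \|h\|^2_{\rSSsp{s}} \, \d t$ is comparable to $\int_0^\infty \|\LL^{-1}\QQ(f,k)\|^2_{\rSSsp{s}} \, \d t$. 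The semigroup produces no power of $\eps$ here; any smallness must come from $\QQ(f, U^\eps_\disp g)$ itself being small in $L^2_t(\rSSsm{s})$.

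This is exactly why \eqref{eq:bilinear_from_hydro} carries no factor $\eps$, unlike \eqref{eq:bilinear_from_kinetic}. It is also why the paper only shows that $\SS^\eps_0 = \Psi^\eps_\kin[f_\ns]$ vanishes in the weaker norm $L^\infty_t(\rSSs{s})$. Since $U^\eps_\disp g$ lies in the hydrodynamic regime, not the kinetic one, revisiting (5.14) gives you the uniform bound on this piece but not $\beta(g,\eps) \to 0$.

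The kinetic piece therefore needs the same dispersive input as the hydrodynamic one. You must show that
\[
\int_0^\infty \|\QQ(f, U^\eps_\disp g)\|^2_{\rSSsm{s}} \, \d t \to 0 ,
\]
using time-integrated decay of $U^\eps_\disp g$ in $L^\infty_x$-type norms. For Besov-regular $g$ the $W^{s,\infty}_x$ decay is like $(\eps/t)^{(d-1)/2}$, which is square-integrable in time when $d=3$. When $d=2$ it is not, so you must pair it by H\"older in time with the $L^\infty_t$ and $L^2_t$ controls on $f$ contained in $\Nt{f}_{\rSSSh{s}}$. You then pass to general $g$ by density, using the uniform bound. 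This is what \cite[(5.19)]{GL24} supplies.

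The rest of your plan is fine: the splitting of the $\sumsp{s}$ norm, the trivial upper bound on $\beta$, the density argument, and deferring the hydrodynamic piece to \cite[(5.9)]{GL24}.
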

		This was stated in \cite[eqs. (5.9) and (5.19)]{GL24}.
		
		In terms of the sum space $\sumsp{s}$, these estimates rewrite as follows.
		
		\begin{cor}[Bilinear estimates in the sum space]
			The bilinear operator $\Psi^\eps$ is continuous in the space $\sumsp{s}$:
			\begin{equation}
				\label{eq:bilinear_psi_est}
				\Nt{ \Psi^\eps[ f , g ] }_{ \sumsp{s} } \lesssim \Nt{ f }_{ \sumsp{s} } \Nt{ f }_{ \sumsp{s} } \ ,
			\end{equation}
			and, more precisely, when $f$ (for instance) is in the kinetic regime
			\begin{equation}
				\label{eq:bilinear_psi_kin_est}
				\Nt{ \Psi^\eps[ f , g ] }_{ \sumsp{s} } \lesssim \eps \, \Nt{ f }_{ \rSSSk{s} } \Nt{ g }_{ \sumsp{s} } \, .
			\end{equation}
			Furthermore, for any $g \in \rSSsm{s} \cap \rhSSsm{1-\boldsymbol{\alpha}}$, one has
			\begin{equation}
				\label{eq:convergence_psi_oscillation}
				\Nt{ \Psi^\eps\left[ f , U^\eps_\disp g \right] }_{ \sumsp{s} } \lesssim \beta( g , \eps ) \Nt{ f }_{ \sumsp{s} } \, .
			\end{equation}
		\end{cor}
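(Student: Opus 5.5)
The plan is to derive the three estimates of the corollary directly from the linear, bilinear and oscillation estimates of the two preceding propositions, using bilinearity of $\Psi^\eps$ and the definition of $\sumsp{s}$ as an infimum over decompositions. Two preliminary facts are needed. First, $\Psi^\eps = \Psi^\eps_\hyd + \Psi^\eps_\kin$, and each piece lands in its natural space, so for any $F$ we have $\Nt{\Psi^\eps F}_{\sumsp{s}} \le \Nt{\Psi^\eps_\hyd F}_{\rSSSh{s}} + \Nt{\Psi^\eps_\kin F}_{\rSSSk{s}}$. Second, $\eps \le 1$, which we may assume since only small $\eps$ matters. We also read the right-hand side of \eqref{eq:bilinear_psi_est} as $\Nt{f}_{\sumsp{s}}\Nt{g}_{\sumsp{s}}$; the repeated $f$ there is a typo.

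\textbf{Proof of \eqref{eq:bilinear_psi_est}.} Fix decompositions $f = f_1 + f_2$ and $g = g_1 + g_2$ with $f_1, g_1 \in \rSSSh{s}$ and $f_2, g_2 \in \rSSSk{s}$. By bilinearity,
$$\Psi^\eps[f,g] = \Psi^\eps[f_1,g_1] + \Psi^\eps[f_1,g_2] + \Psi^\eps[f_2,g_1] + \Psi^\eps[f_2,g_2].$$
The first term is bounded in $\sumsp{s}$ by $\Nt{f_1}_{\rSSSh{s}}\Nt{g_1}_{\rSSSh{s}}$ using \eqref{eq:bilinear_from_hydro}. Each of the other three terms has at least one argument in the kinetic regime. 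For these we use \eqref{eq:bilinear_from_kinetic}, together with the symmetric version with the roles of the two arguments swapped, and the min on its right-hand side picks the norm matching the other argument. This gives the bound $\eps\,(\cdots)$ with $\eps \le 1$. Summing the four bounds yields $(\Nt{f_1}_{\rSSSh{s}} + \Nt{f_2}_{\rSSSk{s}})(\Nt{g_1}_{\rSSSh{s}} + \Nt{g_2}_{\rSSSk{s}})$. Taking the infimum over both decompositions gives \eqref{eq:bilinear_psi_est}.

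\textbf{Proof of \eqref{eq:bilinear_psi_kin_est}.} Here $f$ is kinetic, so we decompose only $g = g_1 + g_2$. Apply \eqref{eq:bilinear_from_kinetic} to $\Psi^\eps[f,g_1]$, using the min with $\Nt{g_1}_{\rSSSh{s}}$, and to $\Psi^\eps[f,g_2]$, using the min with $\Nt{g_2}_{\rSSSk{s}}$. Summing and taking the infimum over decompositions of $g$ gives \eqref{eq:bilinear_psi_kin_est}.

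\textbf{Proof of \eqref{eq:convergence_psi_oscillation}.} Write $f = f_1 + f_2$ as before. The hydrodynamic part $\Psi^\eps[f_1, U^\eps_\disp g]$ is handled directly by \eqref{eq:psi_oscillation_est}. For the kinetic part, \eqref{eq:linear_semigroup_est} places $U^\eps_\disp g$ in $\rSSSh{s}$ with norm $\lesssim \|g\|_{\rSSs{s}} + \|g\|_{\rhSSsm{1-\boldsymbol{\alpha}}}$. Then \eqref{eq:bilinear_from_kinetic} gives
$$\Nt{\Psi^\eps[f_2, U^\eps_\disp g]}_{\sumsp{s}} \lesssim \eps\, \Nt{f_2}_{\rSSSk{s}} \bigl(\|g\|_{\rSSs{s}} + \|g\|_{\rhSSsm{1-\boldsymbol{\alpha}}}\bigr).$$

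\textbf{Main obstacle.} The difficulty is to absorb this last factor into $\beta(g,\eps)$. The stated lower bound only gives $\eps\|g\|_{\rSSs{s}} \lesssim \beta$, and does not control the $\rhSSsm{1-\boldsymbol{\alpha}}$ part. The fix I would try first is to redefine $\beta(g,\eps)$ as the maximum of the original rate and $\eps(\|g\|_{\rSSs{s}} + \|g\|_{\rhSSsm{1-\boldsymbol{\alpha}}})$. This still vanishes as $\eps \to 0$ and still satisfies both stated two-sided bounds, so the conclusion holds with this modified rate. Taking the infimum over decompositions of $f$ then concludes.
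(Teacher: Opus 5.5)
Your argument is correct and is the decomposition-plus-bilinearity argument the paper leaves implicit when it says the preceding propositions ``rewrite'' in the sum space. Enlarging $\beta$ is harmless but, with the exponent as printed, unnecessary: since $0\le 1-\boldsymbol{\alpha}<s$ and $\|\cdot\|_{\Ssm}\le\|\cdot\|_{\Ss}$, one has $\|g\|_{\rhSSsm{1-\boldsymbol{\alpha}}}\le\|g\|_{\rSSs{s}}$, so the stated lower bound $\eps\|g\|_{\rSSs{s}}\lesssim\beta(g,\eps)$ already absorbs the kinetic contribution (your fix would only be needed under the negative-order reading $\rhSSsm{\boldsymbol{\alpha}-1}$ that the paper uses elsewhere).
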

%
		We now introduce the Duhamel term associated with the fully nonlinear term of the equation:
		\begin{equation}
			\begin{split}
				\Lambda^\eps[  g ](t) := \frac{1}{\eps} \int_0^t U^\eps(t - \tau) \RR^\eps[ g(\tau) ] \d \tau \\
				= \Lambda^\eps_\hyd[g](t) + \Lambda^\eps_\kin[ g](t) \, .
			\end{split}
		\end{equation}
		
		\begin{prop}
			For any $g, h \in \sumsp{s}$ such that $\displaystyle \sup_{t \ge 0} \| (g, h)(t) \|_{ \rSSs{s} } \le \delta$, there holds
			\begin{gather}
				\label{eq:fully_nonlin_bound}
				\Nt{ \Lambda^\eps[ g] }_{ \sumsp{s} } \lesssim \Nt{ g }_{ \sumsp{s} }^2 \, , \\
				\label{eq:fully_nonlin_Lip}
				\Nt{ \Lambda^\eps[  g] - \Lambda^\eps[ h] }_{ \sumsp{s} } \lesssim \Nt{ (g, h) }_{ \sumsp{s} } \Nt{ g - h }_{ \sumsp{s} } \, .
			\end{gather}
		\end{prop}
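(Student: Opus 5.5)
Since $\RR^\eps[g]$ plays the same role in $\Lambda^\eps$ that $\QQ(f,g)$ plays in $\Psi^\eps$, the plan is to reduce both estimates to a linear bound for the Duhamel operator
$$\mathcal{D}^\eps[G](t) := \frac{1}{\eps}\int_0^t U^\eps(t-\tau) G(\tau)\,\d\tau ,$$
applied to a source $G$ that is orthogonal to $\ker \LL$ (this is Assumption \ref{AsN_ortho}, playing the role of \ref{Bortho}). The bilinear estimates \eqref{eq:bilinear_from_hydro}--\eqref{eq:bilinear_from_kinetic} of \cite{GL24} are really proved as linear estimates on $\mathcal{D}^\eps$ in terms of norms of the source, followed by \ref{Bbound}. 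Concretely, I expect \cite[Section 5]{GL24} to give, for microscopic sources $G = \Pi^\perp G$,
$$\Nt{ \mathcal{D}^\eps_\hyd[G] }_{\rSSSh{s}} + \Nt{ \mathcal{D}^\eps_\kin[G] }_{\rSSSk{s}} \lesssim \| G \|_{ L^2_t ( \rSSsm{s} \cap \rhSSsm{1-\boldsymbol{\alpha}} ) } .$$
The reasons are as follows. On the kinetic part, the $\eps^{-2}$ dissipation in $\Ssp$ absorbs the $\eps^{-1}$ prefactor via the duality $\Ssm$--$\Ssp$. On the hydrodynamic part, orthogonality lets $P^\eps_\hyd$ produce a factor $\eps|\xi|$, which cancels the $\eps^{-1}$. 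The heat-type smoothing then trades the $L^2_t$ source for the $\rSSSh{s}$ norm; the $\dot{\H}^{1-\boldsymbol{\alpha}}$ part of the norm handles low frequencies. I will state this linear estimate as a lemma, extracted from the proofs of \eqref{eq:bilinear_from_hydro} in \cite{GL24}.

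Granting the lemma, \eqref{eq:fully_nonlin_bound} follows from \ref{AsN_bound} applied at each time:
$$\| \RR^\eps[g(t)] \|_{ \rSSsm{s}\cap\rhSSsm{1-\boldsymbol{\alpha}} } \lesssim \| |\nabla_x|^{\boldsymbol{\alpha}} g(t) \|_{\rSSsp{s-\boldsymbol{\alpha}}} \, \| g(t) \|_{\rSSs{s}} .$$
Taking $L^2_t$ norms gives the bound
$$\sup_t \|g(t)\|_{\rSSs{s}} \cdot \| |\nabla_x|^{\boldsymbol{\alpha}} g \|_{L^2_t \rSSsp{s-\boldsymbol{\alpha}}} .$$
The first factor is $\lesssim \Nt{g}_{\sumsp{s}}$ by the comparison noted earlier. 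For the second, split $g = g_1 + g_2$ with $g_1 \in \rSSSh{s}$ and $g_2 \in \rSSSk{s}$. The hydrodynamic part satisfies $\| |\nabla_x|^{\boldsymbol{\alpha}} g_1 \|_{L^2_t\rSSsp{s-\boldsymbol{\alpha}}} \le \Nt{g_1}_{\rSSSh{s}}$. For the kinetic part, $\la\xi\ra^{s-\boldsymbol{\alpha}}|\xi|^{\boldsymbol{\alpha}} \le \la\xi\ra^s$ gives
$$\| |\nabla_x|^{\boldsymbol{\alpha}} g_2 \|_{L^2_t\rSSsp{s-\boldsymbol{\alpha}}} \le \| g_2 \|_{L^2_t\rSSsp{s}} \le \eps \Nt{g_2}_{\rSSSk{s}} .$$
Taking the infimum over splittings yields $\Nt{g}_{\sumsp{s}}^2$. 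The smallness hypothesis $\sup_t\|(g,h)(t)\|_{\rSSs{s}}\le\delta$ is exactly what makes \ref{AsN_bound} applicable pointwise in time. Strictly, \ref{AsN} asks for smallness in $\rSSsp{s}$; I will read it as the $\rSSs{s}$ smallness given here, consistent with how it is used.

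The Lipschitz estimate \eqref{eq:fully_nonlin_Lip} follows identically from linearity of $\mathcal{D}^\eps$ and \ref{AsN_Lip}. Each of the two products there is bounded by the sup-in-time $\rSSs{s}$ norm of one factor times the $L^2_t$ norm of $|\nabla_x|^{\boldsymbol{\alpha}}$ of the other. Both are controlled by $\sumsp{s}$ norms as above, giving $\Nt{(g,h)}_{\sumsp{s}}\Nt{g-h}_{\sumsp{s}}$.

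The main obstacle is the linear Duhamel lemma: checking that the proofs in \cite{GL24} only use the source through $\|G\|_{L^2_t(\rSSsm{s}\cap\rhSSsm{1-\boldsymbol{\alpha}})}$ and its orthogonality, not its bilinear structure. The delicate point is the hydrodynamic part at low frequency. There I would first use that $P^\eps_\hyd G$ gains $\eps|\xi|$ on microscopic sources. Then I would apply a maximal-regularity/Plancherel-in-time bound for $e^{-t|\xi|^2}$ to obtain $\int_0^\infty \||\xi|^{\boldsymbol{\alpha}}\cdot\|^2$ from $|\xi|^{1-\boldsymbol{\alpha}}\|\widehat G\|_{\Ssm}$ in $L^2_t$. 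The $L^\infty_t\rSSsp{s}$ part is handled by the same computation with Cauchy–Schwarz in time.
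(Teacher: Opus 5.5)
Your proposal is correct and is essentially the paper's own proof. The paper combines the linear Duhamel bounds of \cite{GL24} (eq.~(4.2a) there for $\Lambda^\eps_{\hyd}$, Lemma~4.9 there for $\Lambda^\eps_{\kin}$) with \ref{AsN_bound} applied pointwise in time, then splits $g = g_{\hyd} + g_{\kin}$ and uses $\int_0^\infty \| g_{\kin}(t) \|_{\rSSsp{s}}^2 \, \d t \le \eps^2 \Nt{g_{\kin}}_{\rSSSk{s}}^2$, exactly as you do.

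One small correction to your low-frequency heuristic, which only matters when $d=2$. After the $\eps|\xi|$ gain, the Plancherel-in-time bound for $e^{-t|\xi|^2}$ gives the weight $|\xi|^{\boldsymbol{\alpha}-1}$, not $|\xi|^{1-\boldsymbol{\alpha}}$. So the source space should be read as $\rhSSsm{\boldsymbol{\alpha}-1}$, as in Lemma~\ref{lem:bilinear_estimate_sobolev} and the model verifications; the exponent $1-\boldsymbol{\alpha}$ written in \ref{AsN_bound} is a typo you inherited.
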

		
		\begin{proof}
			Consider some decomposition $g = g_\kin + g_\kin \in \rSSSh{s} + \rSSSk{s}$. Combining \cite[eq. (4.2a)]{GL24} with \ref{AsN_bound}, one has
			\begin{align*}
				\Nt{ \Lambda^\eps_\hyd[g] }^2_{ \rSSSh{s} } \lesssim & \int_0^\infty \left\| | \nabla_x |^{ \boldsymbol{\alpha} } g(t) \right\|_{ \rSSsp{s-\boldsymbol{\alpha}} }^2 \| g(t) \|_{\rSSs{s}}^2 \d t \\
				\lesssim & \| g \|_{ L^\infty_t ( \rSSs{s} ) }^2 \int_0^\infty \left\| | \nabla_x |^{ \boldsymbol{\alpha} } g_\hyd(t) \right\|_{ \rSSsp{s-\boldsymbol{\alpha}} }^2  \d t \\
				& +  \| g \|_{ L^\infty_t ( \rSSs{s} ) }^2 \int_0^\infty \left\| g_\kin(t) \right\|_{ \rSSsp{s} }^2 \d t \\
				\lesssim &  \| g \|_{ L^\infty_t ( \rSSs{s} ) }^2 \Nt{g}^2_{ \sumsp{s} } \lesssim  \Nt{g}^4_{ \sumsp{s} } \, .
			\end{align*}
			Similarly, one proves using \cite[Lemma 4.9]{GL24} that $\Nt{ \Lambda^\eps_\kin[g] }_{ \rSSSk{s} } \lesssim \Nt{ g }_{ \sumsp{s} }^2$, which implies \eqref{eq:fully_nonlin_bound}. The Lipschitz estimate \eqref{eq:fully_nonlin_Lip} is established in the same way.
		\end{proof}
		
		\subsection{Proof of Theorem \ref{thm:main}}
		\label{scn:proof_hydro_limit}
		
		Let us consider initial data $f_\ini$ with the size restriction
		$$\| f_\ini \|_{ \rSSs{s} \cap \rhSSsm{1-\boldsymbol{\alpha}} } \le \delta \ll 1$$
		as well as the corresponding global Navier-Stokes-Solution $f_\ns$ in the sense that it satisfies \eqref{eq:integral_NSF}:
		$$f_\ns(t) = U_\ns(t) f_\ini + \Psi_\ns[ f_\ns , f_\ns ](t) \, .$$
		We look for a solution of the form
		$$f^\eps = f^\eps_\kin + f^\eps_\disp + f_\ns + g^\eps$$
		where the kinetic initial layer generated by the microscopic part of the initial data $f^\eps_\kin(t) = U^\eps_\kin(t) f_\ini^\perp$ is such that by \cite[Lemma 4.8]{GL24} for some $\sigma > 0$
		$$\sup_{t \ge 0} e^{2 \sigma t / \eps^2} \| f^\eps_\kin(t) \|^2_{ \rSSs{s} } + \frac{1}{\eps^2} \int_0^\infty e^{2 \sigma t / \eps^2} \| f^\eps_\kin(t) \|^2_{ \rSSsp{s} } \d t \lesssim \| f_\ini^\perp \|_{ \rSSs{s }}^2 \ ,$$
		and the oscillating part $f^\eps_\disp(t) = U^\eps_\disp(t) f_{\ini, \ip} = U^\eps_\disp(t) f_{\ini, \ip}$ is generated by the the ill-prepared macroscopic part of the initial data, it satisfies by \cite[Lemma 4.5]{GL24}
		$$\| U^\eps_\disp(t) f_{\ini, \ip} \|_{ W^{s, \infty}_x (\Ssp_v) } \lesssim \left( \frac{\eps}{t} \right)^{ \frac{d-1}{2} } \| f_{\ini, \ip} \|_{ \dot{\mathbb{B}}_{1,1}^{\frac{d+1}{2} + s} (\Ssm_v) } \ .$$
		The new unknown $g^\eps$ is then to be sought in the space~$\sumsp{s}_\eps$ and satisfies the integral equation
		\begin{align*}
			g^\eps = & \eps \Lambda^\eps[f^\eps] + \Psi^\eps[ f^\eps ] - \Psi^\eps[f_\ns] + \left( U^\eps_\hyd f_\ini - U^\eps_\Wave f_\ini + \Psi^\eps[f_\ns] - f_\ns \right) \\
			& + U^\eps_\kin \Pi f_\ini + (U_\Wave^\eps-U^\eps_\disp) f_\ini \\
			= & \eps \Lambda^\eps[f^\eps] + \Psi^\eps[ f^\eps ] - \Psi^\eps[f_\ns] + \sum_{k=0}^{3} \SS_k^\eps
		\end{align*}
		where, using the integral formulation \eqref{eq:integral_NSF} of $f_\ns$, we denoted (note that $\SS^\eps_1$ is the same one as in \cite[eq. (2.15b)]{GL24})
		\begin{align*}
			\SS_1^\eps = & U^\eps_\hyd f_\ini - U^\eps_\Wave f_\ini + \Psi^\eps_\hyd[f_\ns] - f_\ns \\
			= & ( U^\eps_\ns - U_\ns ) f_\ini + ( \Psi^\eps_\ns-\Psi_\ns )[f_\ns]
		\end{align*}
		$$\SS_0^\eps = \Psi^\eps_\kin[f_\ns] \, , \qquad \SS^\eps_2 = U^\eps_\kin \Pi f_\ini \ , \qquad \SS^\eps_3 = ( U^\eps_\Wave - U^\eps_\disp ) f_\ini \ .$$
		We now expand the difference between the bilinear terms:
		\begin{align*}
			\Psi^\eps[f^\eps]& -\Psi^\eps[f_\ns] = \Psi^\eps( f^\eps_\kin + f^\eps_\disp + g^\eps ,  2 f_\ns + f^\eps_\kin + f^\eps_\disp + g^\eps ) \\
			&= \Psi^\eps[g^\eps] + 2 \Psi^\eps \left[g^\eps, f^\eps_\kin + f^\eps_\disp + f_\ns \right] + \Psi^\eps\left[ f^\eps_\kin + f^\eps_\disp , 2 f_\ns + f^\eps_\kin + f^\eps_\disp \right] \\
			&=:  \Psi^\eps[g^\eps] + \Phi^\eps g^\eps + \SS_4^\eps \, ,
		\end{align*}
		where $\Phi^\eps$ is a linear operator (depending on $f_\ini$). Denoting $\SS^\eps = \sum_{k=0}^{4} \SS^\eps_k$, we are thus left with
		\begin{equation}
			\label{eq:equation_g_eps}
			g^\eps = \SS^\eps + \Phi^\eps g^\eps + \Psi^\eps[g^\eps] + \eps \Lambda^\eps[f^\eps] \, .
		\end{equation}

		\subsubsection{Estimate of the source terms}
		The first part of the source term was estimated in \cite[Lemma 6.2]{GL24}. By tracking the intermediate estimates in its proof, because of the smallness assumption $\| \Pi f_\ini \|_{ \H^s } \le \delta$, one has
		$$\Nt{\SS^\eps_1}_{ \rSSSh{s} } \lesssim \eps^{r-s} \| f_\ini \|_{ \rSSs{r} \cap \rhSSsm{1-\boldsymbol{\alpha}} } \ .$$
		The second part of the source term is estimated using the fact that from \cite[Definition 2.4]{GL24}
		$$U^\eps_\kin = U^\eps_\kin \left( \id - P^\eps_\hyd \right)$$
		and that from \cite[Lemma 3.4]{GL24} (see also the proof of \cite[eq. (4.2)]{GL24} for more details)
		$$\forall r \in [s, s+1] \ , \quad \| P^\eps_\hyd - \Pi \|_{ \rSSs{r} \to \rSSsp{s} } \lesssim \eps^{r-s} \, .$$
		Therefore, there holds
		$$\Nt{ \SS_2^\eps }_{ \rSSSh{s} } = \Nt{ U^\eps_\kin \left( \Pi - P^\eps_\hyd \right) \Pi f_\ini }_{ \rSSSh{s} } \lesssim \eps^{r-s} \| \Pi f_\ini \|_{ \rSSs{r} } \, .$$
		The third term of the source term is estimated using \cite[Lemma 4.3]{GL24}:
		$$\forall r \in [s, s+1] \ , \quad \Nt{ \SS_3^\eps }_{ \rSSSk{s} } \lesssim \eps^{r-s} \| f_\ini \|_{  \rSSs{r} \cap \rhSSsm{1-\boldsymbol{\alpha}} } \ .$$
		The fourth term of the source term is estimated by expanding it as follows:
		$$\SS^\eps_4 = 2 \Psi^\eps[ f^\eps_\kin , f_\ns ] + \Psi^\eps[ f^\eps_\kin ] +2  \Psi^\eps[ f^\eps_\kin, f^\eps_\disp ] + 2 \Psi^\eps[ f^\eps_\disp , f_\ns ] + \Psi^\eps[f^\eps_\disp ]= \sum_{k=1}^{5} \SS^\eps_{4, k}$$
		Using \eqref{eq:bilinear_psi_kin_est} followed by \eqref{eq:linear_semigroup_est} and \eqref{eq:nsf_hydro_regime_est} we have the following estimate:
		\begin{align*}
			\sum_{k=1}^3 \Nt{ \SS^\eps_{4, k} }_{ \sumsp{s} } & \lesssim \eps \Nt{ f^\eps_\kin }_{ \rSSSk{s} } \left( \| \Pi f_\ini \|_{ \rSSs{r} \cap \rhSSsm{1-\boldsymbol{\alpha}} } + \| f_\ns \|_{ \rSSSh{s} } \right) \\
			& \lesssim \eps \| f_\ini^\perp \|_{ \rSSs{s} } \| \Pi f_\ini \|_{ \rSSs{s} \cap \rhSSsm{1-\boldsymbol{\alpha}}  } 
		\end{align*}
		and using \eqref{eq:convergence_psi_oscillation} followed by \eqref{eq:nsf_hydro_regime_est} and \eqref{eq:linear_semigroup_est}, we also have
		$$\sum_{k=4}^5 \Nt{ \SS^\eps_{4, k} }_{ \sumsp{s} } \lesssim \beta( f_{\ini, \ip} , \eps ) \| \Pi f_\ini \|_{ \rSSs{s} }$$
		so that together
		$$\Nt{ \SS^\eps_4 }_{ \sumsp{s} } \lesssim \left( \eps \| f_\ini \|_{ \rSSs{s} \cap \rhSSsm{1-\boldsymbol{\alpha}} } + \beta( f_{\ini, \ip} , \eps ) \right) \| f_\ini \|_{ \rSSs{s}} \, .$$
		In conclusion, using that $\| f_\ini \|_{\rSSs{s} \cap \dot{\H}_x^{1-\boldsymbol{\alpha}} ( \Ssm_v )} \le \delta$, we have the vanishing estimate
		\begin{equation}
			\label{eq:summary_source_terms}
			\forall r \in [s, s+1] \ , \quad \Nt{ \SS^\eps - \SS^\eps_0 }_{ \sumsp{s} } \lesssim  \eps^{r-s} \| f_\ini \|_{ \rSSs{r} \cap \rhSSsm{1-\boldsymbol{\alpha}}} + \beta( f_{\ini, \ip} , \eps ) \, .
		\end{equation}
		Including the last term (which we will prove to vanish in a weaker norm), we finally have using \eqref{eq:bilinear_psi_est} and \eqref{eq:nsf_hydro_regime_est}
		\begin{equation}
			\label{eq:source_term_est_crude}
			\Nt{ \SS^\eps }_{ \sumsp{s} } \lesssim \| f_\ini \|_{ \rSSs{s} } + \| f_\ini \|_{ \rhSSsm{1-\boldsymbol{\alpha}}  } \, .
		\end{equation}
		
		\subsubsection{Estimate of the linear term} Using \eqref{eq:bilinear_psi_est} followed by \eqref{eq:linear_semigroup_est} and \eqref{eq:nsf_hydro_regime_est}, we have the continuity estimate
		\begin{equation}
			\label{eq:linear_phi_est}
			\Nt{ \Phi^\eps }_{ \sumsp{s} \to \sumsp{s} } \lesssim \| f_\ini \|_{\rSSs{s}} + \| f_\ini \|_{ \rhSSsm{1-\boldsymbol{\alpha}}  } \, .
		\end{equation}
		
		\subsubsection{Proof of existence}
		Let us consider some small ball $\UU$ of $\sumsp{s}$ of radius $R$, where we will take $\delta \ll R \ll 1$, and the mapping
		$$\forall h \in \UU \, , \quad \Xi^\eps(h) = \SS^\eps + \Phi^\eps h + \Psi^\eps[h] + \eps \Lambda^\eps[ f^\eps_\kin + f^\eps_\disp + f_\ns + h ] \, .$$
		Using \eqref{eq:source_term_est_crude} for $\SS^\eps$, \eqref{eq:bilinear_psi_est} for $\Psi^\eps$, \eqref{eq:linear_phi_est} for $\Phi^\eps$ and \eqref{eq:fully_nonlin_bound} for $\Lambda^\eps$, we have the stability estimate
		$$\sup_{h \in \UU } \Nt{ \Xi^\eps( h ) }_{ \sumsp{s} } \lesssim \delta + \delta R + R^2  \, ,$$
		thus, for $\delta \ll R \ll 1$ we indeed have that $\Xi( \UU ) \subset \UU$. Similarly, using this time \eqref{eq:fully_nonlin_Lip} for $\Lambda^\eps$, there holds
		$$\Nt{ \Xi^\eps(h_1) -\Xi^\eps(h_2) }_{ \sumsp{s}} \lesssim (\delta + R) \Nt{ h_1 - h_2 }_{ \sumsp{s} } $$
		so that the same smallness assumption makes $\Xi$ a contraction. There thus exists a unique fixed point $g^\eps$, or in other words, a solution to \eqref{eq:equation_g_eps}, which is such that $\Nt{ g^\eps }_{ \sumsp{s} } \le R$ uniformly in $\eps$.

		\subsubsection{Proof of uniqueness}
		
		Consider two solutions $f_j^\eps$ ($j=1,2$) such that
		$$\sup_{t \ge 0} \| f^\eps_j(t) \|_{ \rSSs{s} } \le 2 \delta \quad \text{and} \quad |\nabla|^{ \boldsymbol{\alpha} } f^\eps_j \in L^2_{loc} \left( [0, \infty) ; \rSSsp{s - \boldsymbol{\alpha} } \right) \, .$$
		Their difference $h^\eps := f^\eps_1 - f^\eps_2$ satisfies the equation
		$$\partial_t h^\eps = \frac{1}{\eps^2} \left( \LL - \eps v \cdot \nabla_x \right) h^\eps + \frac{1}{\eps} \QQ\left( h^\eps , f^\eps_1 + f^\eps_2 \right) +  \left( \RR^\eps[ f^\eps_1] - \RR^\eps[ f^\eps_2 ] \right)$$
		thus, using \cite[eq. (5.1)]{GL24} to estimate $\QQ$ and \ref{AsN_Lip} for $\RR^\eps$, a simple energy estimate in $\rSSs{s}$ yields for some $C, \lambda > 0$ (that may change from one line to another)
		\begin{align*}
			\frac{1}{2} & \frac{\d}{\d t} \| h^\eps(t) \|_{ \rSSs{s} }^2 + \frac{\lambda}{\eps^2} \| h^\eps(t) \|_{ \rSSsp{s} }^2 \le \frac{C}{\eps^2} \| h^\eps(t) \|_{ \rSSs{s} }^2 \\
			& + C \| h^\eps (t) \|_{ \rSSsp{s} } \left( \left\| (f_1^\eps, f_2^\eps) \right\|_{\rSSs{s}} \left\| | \nabla_x |^{\boldsymbol{\alpha}} h^\eps \right\|_{ \rSSsp{s - \boldsymbol{\alpha}} } 
			+ \left\| | \nabla_x |^{\boldsymbol{\alpha}} (f^\eps_1, f^\eps_2) \right\|_{ \rSSsp{s-\boldsymbol{\alpha}} } \left\| h^\eps \right\|_{ \rSSs{s} }  \right)
		\end{align*}
		and therefore, by Young's inequality
		\begin{align*}
			\frac{1}{2} \frac{\d}{\d t} \| h^\eps(t) \|_{ \rSSs{s} }^2 & + \left( \frac{\lambda}{\eps^2} - C \delta^2 \right) \| h^\eps(t) \|_{ \rSSsp{s} }^2 \\
			& \le C \left( \frac{1}{\eps^2} + \left\| | \nabla_x |^{\boldsymbol{\alpha}} (f^\eps_1, f^\eps_2) \right\|_{ \rSSsp{s-\boldsymbol{\alpha}} }^2 \right) \left\| h^\eps \right\|_{ \rSSs{s} }^2  \, ,
		\end{align*}
		which simplifies for $\delta$ small enough as
		\begin{align*}
			\frac{1}{2} \frac{\d}{\d t} \| h^\eps(t) \|_{ \rSSs{s} }^2 & + \frac{\lambda}{\eps^2} \| h^\eps(t) \|_{ \rSSsp{s} }^2 \\
			& \le C \left( \frac{1}{\eps^2}  + \left\| | \nabla_x |^{\boldsymbol{\alpha}} (f^\eps_1, f^\eps_2) \right\|_{ \rSSsp{s-\boldsymbol{\alpha}} }^2 \right) \left\| h^\eps \right\|_{ \rSSs{s} }^2  \, .
		\end{align*}
		Because of the local integrability assumption on $|\nabla_x|^{\boldsymbol{\alpha}} f_j^\eps$, we conclude by a Gronwall argument that $f^\eps_1(t) = f^\eps_2(t)$ for $t$ close to zero, and then repeat the argument to conclude that $f^\eps_1 = f^\eps_2$.
		
		\subsubsection{Proof of convergence}
		Since $g^\eps$ solves the equation \eqref{eq:equation_g_eps}, one has that
		\begin{align*}
			\left(g^\eps - \SS^\eps_0\right)  & - \Phi^\eps \left(g^\eps - \SS^\eps_0 \right) - \Psi^\eps\left[g^\eps - \SS^\eps_0 , g^\eps \right] \\
			= & \SS^\eps - \SS^\eps_0 + \eps \Lambda^\eps[ f^\eps] + \Phi^\eps \SS^\eps_0  + \Psi^\eps\left[ g^\eps , \SS^\eps_0 \right]  \, ,
		\end{align*}
		and therefore the estimate for some $C > 0$
		$$\big[ 1 - C (\delta + R)  \big] \Nt{ g - \SS^\eps_0 }_{ \sumsp{s} } \le \Nt{ \text{r.h.s.} }_{ \sumsp{s} } \, ,$$
		or, more simply for $\delta$ small enough
		$$\Nt{ g - \SS^\eps_0 }_{ \sumsp{s} } \le \Nt{ \text{r.h.s.} }_{ \sumsp{s} } \, .$$
		Since we have constructed $g^\eps$ so that $\Nt{g^\eps}_{ \sumsp{s} } \le \delta$, we have by the bilinear estimate \eqref{eq:bilinear_from_kinetic}
		$$\Nt{ \SS^\eps_0 }_{ \rSSSk{s} } \lesssim \| f_\ns \|_{ \rSSSh{s} } \lesssim  \| f_\ini \|_{ \rSSs{s} \cap \rhSSsm{1-\boldsymbol{\alpha}} } \le \delta \, ,$$
		and therefore, using \eqref{eq:bilinear_psi_est} and \eqref{eq:linear_phi_est}, we get the vanishing estimate
		$$\Nt{\Phi^\eps \SS^\eps_0 + \Psi^\eps\left[ g^\eps , \SS^\eps_0 \right]}_{ \sumsp{s}} \lesssim \eps \| f_\ini \|_{ \rSSs{s} \cap \rhSSsm{1-\boldsymbol{\alpha}} } \ .$$
		Similarly, because of \eqref{eq:nsf_hydro_regime_est} and \eqref{eq:linear_semigroup_est}, we deduce that $\Nt{ f^\eps }_{\sumsp{\eps}} \lesssim \| f_\ini \|_{ \rSSs{s} \cap \rhSSsm{1-\boldsymbol{\alpha}} } $ and therefore by the estimate \eqref{eq:fully_nonlin_bound} for $\Lambda^\eps$
		$$\Nt{\eps \Lambda^\eps[h]}_{\rSSs{s}} \lesssim \eps \| f_\ini \|_{ \rSSs{s} \cap \rhSSsm{1-\boldsymbol{\alpha}} } \ .$$
		Combined with \eqref{eq:summary_source_terms} for $\SS^\eps - \SS^\eps_0$, we obtain
		$$\forall r \in [s, s+1] \ , \quad \Nt{ g - \SS^\eps_0 }_{ \sumsp{s} } \lesssim \eps^{r-s} \| f_\ini \|_{ \rSSs{r} \cap \rhSSsm{1-\boldsymbol{\alpha}}} + \beta( f_{\ini, \ip} , \eps ) \ .$$
		Finally, it was established in \cite[Section 6.3]{GL24} that
		$$\left\| \SS^\eps_0 \right\|_{ L^\infty_t \left( \rSSs{s} \right) } \lesssim \eps \Nt{ f_\ns }_{ \rSSSh{s} }^2 \, ,$$
		therefore, we have for $r \in [s, s+1]$
		\begin{align*}
			\left\| g^\eps \right\|_{ L^\infty_t \left( \rSSs{s} \right) } & \lesssim \eps \Nt{ f_\ns }_{ \rSSs{s} }^2 + \eps^{r-s} \| f_\ini \|_{ \rSSs{r} } + \beta( f_{\ini, \ip} , \eps ) \\
			& \lesssim \eps^{r-s} \| f_\ini \|_{ \rSSs{r} \cap \rhSSsm{1-\boldsymbol{\alpha}} } + \beta( f_{\ini, \ip} , \eps ) \, .
		\end{align*}
		As in the original proof \cite[Section 6]{GL24}, one may rely on a density argument to deduce that $\| g^\eps \|_{ L^\infty_t (\rSSs{s})}$ vanishes (unquantitatively) in the case $r=s$.

	\section{Application to the non-linear Fokker-Planck equation}
	\label{scn:NFP}

		Recall the definition of the nonlinear Fokker-Planck operator:
		$$\CC[F] = \nabla_v \cdot \left( T \nabla_v F + (v-U) F \right)$$
		where the macroscopic fields are given by
		$$R = \int F \ \d v \ , \quad U = \int v F \ \d v \qquad \text{and} \qquad T = \frac{1}{R}  \int |v-U|^2 F \ \d v \ .$$
		Consider the expansion of the particle density
		$$F = M + \eps f \ , \qquad M(v) = (2 \pi)^{-d/2} e^{-|v|^2/2} \ ,$$
		and of the nonlinear operator:
		$$\CC[F] = \eps \LL f + \eps^2 \QQ(f, f) + \eps^3 \RR^\eps [f] \ .$$
		The goal of this section is to provide usable expressions of $\LL$, $\QQ$ and $\RR$ and to prove that they satisfy Assumptions \ref{AsL}, \ref{AsB}, \ref{AsN}. To this end, we also recall the expression of the macroscopic fields associated to $f$:
		$$\rho = \int f \d v \, , \qquad u = \int v f \d v \, , \qquad \theta = \frac{1}{d} \int f \left( |v|^2-d \right) \d v \, .$$
		We will denote by $[f]_n$ any linear combination of terms of the form
		$$\frac{\rho^a u^b \theta^c \eps^\delta}{(1+\eps \rho)^p} \ , \qquad a, c, p, \delta \in \N \ , \quad b \in \N^d \ , \quad a + |b| + c \ge n \ . $$
		
		\subsection{Expansion of the collision operator}
		
		Recall the expansion of the temperature derived in Section \ref{sec:expansion_temperature}:
		$$T = 1 + \eps \theta - \eps^2 \left( \rho \theta + \frac{ |u|^2 }{d} \right) + \eps^3 [f]_3 \ .$$
		On the one hand
		\begin{align*}
			T \nabla_v F =&  \left( 1 + \eps \theta - \eps^2 \left( \rho \theta + \frac{|u|^2}{d} \right) + \eps^3 [f]_3 \right) \left( -v M + \eps \nabla_v f \right) \\
			= & -v M + \eps \left( \nabla_v f - v \theta M \right) + \eps^2 \left( v M \left( \rho \theta + \frac{|u|^2}{d}  \right) + \theta \nabla_v f \right) \\
			& + \eps^3 \left( [f]_3 v M + [f]_2 \nabla_v f \right) \ .
		\end{align*}
		On the other hand, recalling the expansion
		$$U = \eps u - \eps^2 \rho u +  \eps^3 [f]_3 \ ,$$
		there also holds
		\begin{align*}
			(v-U) F = & \left( v - \eps u + \eps^2 \rho u - \eps^3 [f]_3 \right) (M + \eps f) \\ 
			= & v M + \eps \left( v f - u M \right) + \eps^2 u \left( \rho M - f \right)  + \eps^3 \left( [f]_3 M + [f]_2 f \right) \ .
		\end{align*}
		Combining the two we obtain
		$$\CC[F] = \eps \LL f + \eps^2 \QQ(f, f) + \eps^3 \RR^\eps[f]$$
		where each part is given by
		\begin{gather}
			\label{eq:def_NFP_L_first}
			\LL f= \nabla_v \cdot \left( \nabla_v f +v f - \left( u + \theta v \right) M \right) \, , \\
			\notag
			\QQ(f, f) = (\theta \nabla_v - u) \cdot \nabla_v (f-\rho M) - \frac{|u|^2}{d} (|v|^2-d) M \, , \\
			\label{eq:structure_R_eps}
			\RR^\eps[f] = [f]_3 \cdot \left(1, v, v^{\otimes 2} \right) + [f]_2 \cdot (\nabla_v f , \nabla^2_v f ) \, .
		\end{gather}
		Note that, since $\CC[F](v) \in \ker(\LL)^\perp$ for any suitable $F$, it is also the case for the coefficients of its Taylor expansion around $M$, namely $\LL$, $\QQ$ and $\RR^\eps$:
		\begin{equation}
			\label{eq:NFP_orthogonal_expansion}
			\forall \varphi \in \ker(\LL) \, , \quad \la \LL f , \varphi \ra_\Ss = \la \QQ ( f , f ) , \varphi \ra_\Ss = \la \RR^\eps [f ], \varphi \ra_\Ss = 0 \, .
		\end{equation}

		\subsection{Functional framework}

		The weight function to be considered is the Maxwellian $\mu = M$ and the regular space the corresponding weighted $H^1$ space:
		$$\Ss = L^2 \left( M^{-1}(v) \d v \right) \quad \text{and} \quad \Ssp := H^1\left( M^{-1}(v) \d v \right) \, .$$
		We introduce the following anisotropic gradient
		$$\natv f := \nabla_v f + v f = M \nabla_v \left( M^{-1} f \right) = - \nabla_v^* f \, ,$$
		where the adjoint is taken with respect to the inner product of $\Ss$. One checks by an integration by parts that the graph-norm of $\natv$ is equivalent to the $\Ssp$--norm:
		\begin{align}
			\notag
			(d+1) \| f \|^2_{ \Ss } + \| \natv f \|_{ \Ss }^2 & = (d+1) \| f \|^2_{ \Ss } + \| \nabla_v f \|^2_\Ss + \| v f \|^2_\Ss + 2 \la v f , \nabla_v f \ra_\Ss \\
			\label{eq:anisotropic_gradient_H1}
			& = \| f \|^2_{ \Ss } + \| \nabla_v f \|^2_\Ss = \| f \|^2_{\Ssp} \, ,
		\end{align}
		furthermore, by Young's inequality
		$$- \| \nabla_v f \|^2_\Ss + \frac12 \| v f \|^2_\Ss \le \| \widetilde{\nabla}_v f \|^2_\Ss \le 2 \| vf \|^2_\Ss + 2 \| \nabla_v f \|^2_\Ss \ ,$$
		thus, interpolating these two lower bounds, we have the estimate
		\begin{align*}
			(d+1) \| f \|^2_\Ss + \| \widetilde{\nabla}_v f \|^2_\Ss & \ge \frac34 \| f \|_{\Ssp}^2 + \frac14 \left( \frac12 \|vf\|^2_\Ss - \| \nabla_v f\|^2_\Ss \right) \\
			& \ge \frac18 \left( \| f \|^2_{\Ssp} + \| vf \|^2_{\Ss} \right) \ .
		\end{align*}
		We thus have the comparison
		\begin{equation}
			\label{eq:characterization_weighted_H1v}
			\| f \|_{\Ssp} \approx \| f \|_{\Ss} + \| \natv f \|_{\Ss} \approx  \| \la v \ra f \|_{\Ss} + \| \nabla_v f \|_{\Ss} \, .
		\end{equation}
		Finally, recall the gaussian Poincaré inequality:
		$$\int_{\R^d} \varphi(v) M(v) d v = 0 \Rightarrow \int_{\R^d} | \nabla_v \varphi(v) |^2 M(v) \d v$$
		which, letting $f = \varphi M$, rewrites
		\begin{equation}
			\label{eq:gaussian_poincare}
			\int f \d v = 0 \Rightarrow \| f \|_{L^2(M^{-1})}^2 \le  \| \natv f \|_{ L^2(M^{-1}) }^2  \, ,
		\end{equation}
		thus for zero-mean functions one actually has
		\begin{equation}
			\label{eq:characterization_weighted_H1v_zero_mean}
			\| f \|_{\Ssp} \approx \| \natv f \|_{\Ss} \approx  \| v f \|_{\Ss} + \| \nabla_v f \|_{\Ss} \, .
		\end{equation}
		\subsection{Checking the linear assumptions}
		
		\label{scn:NFP_linear}
		Let us check that $\LL$ satisfies the linear assumptions \ref{AsL}. Note that \ref{L2} is indeed satisfied since $\mu=M$.
		
		\subsubsection{Assumption \ref{L1}}
		Note that the linear operator writes
		$$\LL f = \nabla_v \cdot \left( M \nabla_v \left( \frac{f}{M} \right) \right) - \nabla_v \cdot \left( M \nabla_v \left( u \cdot v + \frac{\theta}{2} (|v|^2-d)  \right) \right)$$
		which allows to rewrite it under a structurally more suitable form:
		\begin{equation}
			\label{eq:def_L_NFP}
			\LL f = \nabla_v \cdot \left( M \nabla_v \left( \frac{f^\perp}{M} \right)\right) = - \natv^* \cdot \natv f^\perp \, .
		\end{equation}
		Furthermore, since the range of $\LL$ is orthogonal to its null-space by \eqref{eq:NFP_orthogonal_expansion}, one even has
		$$\LL = \left[ \widetilde{\nabla} (\id - \Pi) \right]^* \left[ \widetilde{\nabla} (\id - \Pi) \right] \, ,$$
		so that it is easy to see that $\LL$ is self-adjoint with domain $H^2(M^{-1} \d v)$. Finally, $\LL$ commute with orthogonal change of variables since multiplication by $v_i^n$, derivation and integration do. 
		
		\subsubsection{Assumption \ref{L3}}
		
		Since any element $f \in \ker(\LL)^\perp$ has zero mean ($\rho_f=0$) one has in virtue of \eqref{eq:characterization_weighted_H1v_zero_mean} for some $\lambda_\LL > 0$
		\begin{equation}
			\label{eq:NFP_strong_coercivity}
			\la \LL f , f \rangle_{\Ss} = - \| \widetilde{\nabla} f^\perp \|_{\Ss}^2 \le - \lambda_\LL \| f^\perp \|^2_{\Ssp} \, .
		\end{equation}
		
		\subsubsection{Assumption \ref{L4}}
		To find a decomposition $\LL = \BB + \AA$, one typically lets $\Ss_j = L^2\left( \mu^{-1} \la v \ra^j \right)$ and proves that $\left[ \LL , \la v \ra^{ j} \right]$ is of lower order with respect to $\LL ( \la v \ra^{j} \cdot )$. To reduce this problem, we use the commutator identity $[AB, C] = A [B, C] + [A, C] B$:
		\begin{align*}
			\left[ \LL , \la v \ra^{j} \right] & = \left[ \nabla_v \cdot \natv (\id - \Pi) , \la v \ra^{j} \right] \\
			& = - \nabla_v \cdot \natv \left[ \Pi , \la v \ra^{j} \right] + [ \nabla_v \cdot \natv , \la v \ra^{j} ] (\id - \Pi) \, .
		\end{align*}
		On the one hand, there holds
		$$\forall a, b \in \N \, , \quad \left\| \la v \ra^a \nabla_v^b \left[ \Pi , \la v \ra^{j} \right] \varphi \right\|_{ \Ss } \le C_{a,b} \| \varphi \|_{\Ss} \, ,$$
		as well as
		\begin{align*}
			[ \nabla_v \cdot \natv , \la v \ra^{j} ] & = [ \Delta_v + v \cdot \nabla_v,  \la v \ra^{j} ] \\
			& = \Delta_v \la v \ra^{j} + 2 \nabla_v \la v \ra^{j} \cdot \nabla_v + v \cdot \nabla_v \la v \ra^{j} \\
			& = \OO\left( \la v \ra^j \right) + \OO\left( \la v \ra^{j-1} \right) \nabla_v \, ,
		\end{align*}
		so that, put together, there holds for any $0 < \eta \ll 1$:
		\begin{align}
			\notag
			\la [ \LL , \la v \ra^j] f , \la v \ra^j f \ra_{\Ss} \le &  C_\eta \| \la v \ra^j f \|^2_{\Ss} + \eta \| \nabla_v f \|^2_{ \Ss } \\
			\label{eq:A}
			\le & C_\eta' \| \la v \ra^j f \|^2_{\Ss} + \eta \| \nabla_v ( \la v \ra^j  f) \|^2_{ \Ss } \, .
		\end{align}
		On the other hand, the coercivity estimate \eqref{eq:NFP_strong_coercivity} and the norm equivalence \eqref{eq:characterization_weighted_H1v} together imply that for some $\lambda > 0$
		$$\la \LL \left( \la v \ra^j f \right) , \la v \ra^j f \ra_{\Ss} \le -\lambda \left( \| \nabla_v (\la v \ra^j f)^\perp \|^2_{\Ss} + \| \la v \ra (\la v \ra^j f)^\perp \|^2_{\Ss} \right)$$
		and since both $ \la v \ra \Pi $ and $\nabla_v \Pi$ are bounded on $\Ss$, we deduce
		\begin{equation}
			\label{eq:B}
			\la \LL \left( \la v \ra^j f \right) , \la v \ra^j f \ra_{\Ss}   \le - \lambda \left( \|  \la v \ra^{j+1} f \|^2_{\Ss} + \|  \nabla_v \left( \la v \ra^j f \right)\|^2_\Ss \right) +C  \| \la v \ra^j f \|^2_{ \Ss }  \, .
		\end{equation}
		Taking $\eta$ small enough, the controls \eqref{eq:A} and \eqref{eq:B} combine for some $C, \lambda>0$ as
		$$\la \LL f , f \la v \ra^{2 j} \ra_{ \Ss } \le - \left( \lambda \| \la v \ra^{j+1} f \|_{ \Ss }^2 - C \| \la v \ra^j f \|^2_{\Ss} \right)  \ ,$$
		therefore, for any arbitrary parameter $R > 0$
		\begin{align*}
			\la \LL f , f \la v \ra^{2 j} \ra_{ \Ss } 
			\le & - \left( \lambda \la R \ra^2 - C \right) \| \mathbf{1}_{|v| \ge R} \la v \ra^{j} f \|_{ \Ss }^2  + ( \lambda \la R \ra^2 + C ) \| \mathbf{1}_{|v| \le R} \la v \ra^j f \|_{ \Ss }^2 \\
			\le & - \left( \lambda \la R \ra^2 - C \right) \| \la v \ra^{j} f \|_{ \Ss }^2  + 2 ( \lambda \la R \ra^2 + C ) \| \mathbf{1}_{|v| \le R} \la v \ra^j f \|_{ \Ss }^2 \, .
		\end{align*}
		We therefore define
		$$\BB := \LL - \AA \ , \quad \AA := 2 ( \lambda \la R \ra^2 + C ) \mathbf{1}_{|v| \le R} \quad \text{and} \quad \la R \ra^2 \ge ( C + \lambda_\LL ) / \lambda$$
		so as to obtain the dissipativity estimate
		\begin{align*}
			\la \BB f , f \la v \ra^{2 j} \ra_{ \Ss } & \le - \lambda_\LL \| \la v \ra^j \|_{\Ss}^2 \, .
		\end{align*}
		This then implies \ref{assumption_dissipative} with $\lambda_\BB = \lambda_\LL$ since $i v \cdot \xi$ is skew-symmetric.
		
		\subsection{Nonlinear estimates}
		
		The orthogonality assumption \ref{Bortho} was already justified in \eqref{eq:NFP_orthogonal_expansion} and the isotropy one \ref{Bisotrop} for $\QQ$ follows the same justification as the one for $\LL$.
		
		\subsubsection{Assumption \ref{Bbound}}
		The symmetrized expression of $\QQ$ is
		\begin{align*}
			2 \QQ(f, f') = & (\theta \nabla_v - u) \cdot \nabla_v (f'-\rho' M) + (\theta' \nabla_v - u') \cdot \nabla_v (f-\rho M) \\
			& - 2\frac{u \cdot u'}{d} (|v|^2-d) M \ ,
		\end{align*}
		therefore, by the characterization \eqref{eq:characterization_weighted_H1v} of $\Ssp$, one has that
		$$\| \QQ(f, g)  \|_{ \Ssm } \lesssim \|  f \|_{ \Ss } \left\| g \right\|_{ \Ssp }  \, .$$
		
		\subsubsection{Assumptions \ref{AsN_bound} and \ref{AsN_Lip}}
		Recall from \eqref{eq:structure_R_eps} that for some $A_n(\rho, u, \theta) = [f]_n$, there holds
		\begin{equation*}
			\RR^\eps[f] = A_2^\eps(\rho, u, \theta) \cdot (\nabla_v f , \nabla^2_v f) + A^\eps_3(\rho, u, \theta) \cdot \left(1, v, |v|^2 \right) M \ .
		\end{equation*}
		Using the definition of $\Ssm$ followed by the bilinear estimate of Lemma \ref{lem:bilinear_estimate_sobolev}, we have for any $0 < \boldsymbol{\alpha} < d/2 < s$
		\begin{align*}
			\| A_2^\eps(\rho, u, \theta) \cdot (\nabla_v f , \nabla^2_v f) \|_{ \rSSsm{s} \cap \rhSSsm{ \boldsymbol{\alpha}-1 } } \lesssim & \| A_2^\eps(\rho, u, \theta) \cdot ( f , \nabla_v f ) \|_{ \rSSs{s} \cap \rhSSs{ \boldsymbol{\alpha}-1 } } \\
			\lesssim & \| A_2^\eps(\rho, u, \theta) \|_{ \H^s } \left\| | \nabla_x|^{\boldsymbol{\alpha}} ( f , \nabla_v f ) \right\|_{ \rSSs{s-\boldsymbol{\alpha}} } \\
			\lesssim & \| f \|_{ \rSSs{s} } \left\| | \nabla_x|^{\boldsymbol{\alpha}} f \right\|_{ \rSSsp{s-\boldsymbol{\alpha}} }
		\end{align*}
		where we used \eqref{eq:modulo_estimate} to bound $A_2^\eps$ in the last line since $\| f \|_{ \rSSs{s} } \le \delta \ll 1$. Similarly, one has
		\begin{align*}
			\| A_2^\eps & (\rho, u, \theta) \cdot (\nabla_v f , \nabla_v^2 f)  - A_2^\eps(\rho', u', \theta') \cdot (\nabla_v f' , \nabla_v^2 f') \|_{ \rSSsm{s} \cap \rhSSsm{ \boldsymbol{\alpha}-1 } } \\
			\lesssim & \| \left( A_2^\eps(\rho, u, \theta) - A_2^\eps(\rho', u', \theta') \right) \cdot ( f , \nabla_v f ) \|_{ \rSSs{s} \cap \rhSSs{ \boldsymbol{\alpha}-1 } } \\
			& + \| A_2^\eps(\rho', u', \theta') \cdot ( f - f' , \nabla_v (f-f') ) \|_{ \rSSs{s} \cap \rhSSs{ \boldsymbol{\alpha}-1 } } \\
			\lesssim & \| f-f' \|_{ \rSSs{s} } \left\| | \nabla_x|^{\boldsymbol{\alpha}} f \right\|_{ \rSSsp{s-\boldsymbol{\alpha}} } + \| f' \|_{ \rSSs{s} } \left\| | \nabla_x|^{\boldsymbol{\alpha}} (f-f') \right\|_{ \rSSsp{s-\boldsymbol{\alpha}} } \ .
		\end{align*}
		The estimate for the second term of $\RR^\eps$ is simpler, so we have indeed
		$$\| \RR^\eps[f] \|_{ \rSSsm{s} \cap \rhSSsm{ \boldsymbol{\alpha}-1 } } \lesssim \| f \|_{ \rSSs{s} } \| | \nabla_x |^{\boldsymbol{\alpha}} f \|_{ \rSSsp{s-\boldsymbol{\alpha}} } $$
		\begin{align*}
			\| \RR^\eps[f] - \RR^\eps[f'] \|_{ \rSSsm{s} \cap \rhSSsm{ \boldsymbol{\alpha}-1 } } \lesssim & \| f-f' \|_{ \rSSs{s} } \| | \nabla_x |^{\boldsymbol{\alpha}} f \|_{ \rSSsp{s-\boldsymbol{\alpha}} } \\
			& + \| f' \|_{ \rSSs{s} } \| | \nabla_x |^{\boldsymbol{\alpha}} (f-f') \|_{ \rSSsp{s-\boldsymbol{\alpha}} } \ .
		\end{align*}
		
	\section{The case of the BGK equation}
	\label{scn:BGK}
	
	Recall the definition of the BGK collision operator
	$$\CC[ F ] = \MM[F] - F \ , \qquad \MM[F] = \frac{R}{\sqrt{T}} M\left( \frac{v-U}{\sqrt{T}} \right)$$
	where $M$ is the centered reduced gaussian and $(R, U, F)$ are the moments of $F$ defined for instance at the beginning of Section \ref{scn:NFP}. By the expansion of $\MM[F]$ from Section \ref{scn:exp_maxw}, there holds
	$$\CC[ M + \eps f] = \eps \LL f + \eps^2 \QQ(f,f) + \eps^3 \RR^\eps[f]$$
	where the linear part is (recall the definition \eqref{eq:def_Pi} of $\Pi$)
	$$\LL f = - f^\perp = -(\id - \Pi ) f \ ,$$
	and the bilinear part is
	\begin{align*}
		M^{-1} \QQ(f,f) = & - \left( \frac{d(d+2)}{8} \theta^2  + \frac{|u|^2}{2} \right) - \frac{d+2}{2} (v \cdot u) \theta  \\
		& - \frac{|v|^2-d}{2} \left( \frac{d+2}{2} \theta^2  + \frac{|u|^2}{d} \right) + \frac12 \left( v \cdot u + \theta \frac{|v|^2}{2} \right)^2 \ .
	\end{align*}
	The fully nonlinear remainder writes as a linear combination of terms of the form
	$$\{f\}_3 I \left( \eps \{f\}_1 \cdot V_2 \right)^\alpha J \left( \eps \{f\}_1 \right)^\beta \cdot V_6 \ , \qquad \alpha, \beta \in \{0,1\} \ ,$$
	where we used the following notations of Appendix \ref{scn:exp_maxw}:
	\begin{itemize}
		\item $[f]_n$ denotes any linear combination of terms of the form
		$$\frac{\rho^a u^b \theta^c \eps^\delta}{(1+\eps \rho)^p} \ , \qquad a, c, p, \delta \in \N \ , \quad b \in \N^d \ , \quad a + |b| + c \ge n \ , $$
		\item $\{f\}_n$ denotes any linear combination of terms $\displaystyle \frac{[f]_n}{ (1+\eps[f]_1)^q }$ with $q \in \N$.
		\item $V_n = (1, v, v^{\otimes 2} , \dots, v^{\otimes n})$.
		\item $\displaystyle I(r) = \sum_{ \ge 0} \frac{(-r)^n}{(3+n)!}$ and $\displaystyle J(r) = \sum_{n \ge 0} \binom{d/2}{3+n}$.
	\end{itemize}
	
	\subsection{Functional framework}

	We define the weight function as $\mu = M$ and the spaces
	$$\Ss = \Ssp = \Ssm =  L^2( \mu^{-1} \d v ) \, .$$

	\subsection{Estimates on the linear operator}
	
	Since $\LL = -( \id - \Pi)$ where
	$$\forall a, b \ge 0 \, , \quad \la v \ra^{a} \left| \Pi \left( \la v \ra^b f \right)  \right| \le C_{a, b} \| f \|_{ \Ss } \, , $$
	it is easy to check that Assumptions \ref{AsL} hold by adapting the arguments for the nonlinear Fokker-Planck operator from Section \ref{scn:NFP_linear}.
	
	\subsection{Estimates on the bilinear operator}
	
	Note that $\la v \ra^n M \in \Ss$ for any $n \ge 0$ and the bilinear operator $\QQ(f, f)$ is a linear combination of terms of the form
	$$\rho^a u^b \theta^c v^\alpha M \ , \quad \text{where} \quad a, c \in \N \ , \quad b, \alpha \in \N^d \ , \quad a + |b| +c = 2 \ , $$
	so it follows that
	$$| \QQ(f,f)(v) | \lesssim \| \Pi f \|_{\Ss}^2 M(v)^{3/4} \qquad \text{and thus} \qquad \| \QQ(f, f) \|_{ \Ss } \lesssim \| \Pi f \|_{\Ss}^2 \ ,$$
	which is stronger than Assumption \ref{AsB}.
	
	\subsection{Estimates on the fully nonlinear remainder}
	
	Since $\| \{f\}_1 \|_{\H^s} \lesssim \delta$ by \eqref{eq:modulo_estimate}, taking $\delta$ small enough, the product estimate involving analytic functions of Lemma \ref{lem:bilinear_analytic_sobolev}, we have for some $C>0$
	\begin{align*}
		\| \{f\}_1 & M(v) I \left( \eps \{f\}_1 \cdot V_2 \right)^\alpha J \left( \eps \{f\}_1 \right)^\beta \cdot V_6 \|_{ \H^s } \\
		& \lesssim \delta \JJ\left( C \delta \right) \II\left( C \delta \la v \ra^2 \right) e^{-\frac{|v|^2}{2}} \la v \ra^6 \ ,
	\end{align*}
	with the notation $\II, \JJ$ from Lemma \ref{lem:bilinear_analytic_sobolev}. Using the estimate for $\{f\}_n$ from \eqref{eq:modulo_estimate} and the fact that $\II(r) \lesssim e^{r}$, we then deduce
	\begin{align*}
		\| \{f\}_1 & I \left( \eps \{f\}_1 \cdot V_2 \right)^\alpha J \left( \eps \{f\}_1 \right)^\beta \cdot V_6 \|_{ \H^s } \\
		& \lesssim  M(v) e^{-\frac{|v|^2}{2} + C \delta \la v \ra^2} \la v \ra^6 \ ,
	\end{align*}
	and thus, for $\delta$ small
	\begin{align*}
		\| \{f\}_1 I \left( \eps \{f\}_1 \cdot V_2 \right)^\alpha J \left( \eps \{f\}_1 \right)^\beta \cdot V_6 \|_{ \H^s } \lesssim  M(v)^{3/4} \ .
	\end{align*}
	Considering $\{f\}_3 = \{f\}_1 \{f\}_2$, we conclude thanks to Lemma \ref{lem:bilinear_estimate_sobolev} that
	$$\| \RR^\eps[f] \|_{ \rSSs{s} \cap \rhSSs{\boldsymbol{\alpha}-1} } \lesssim \| f \|_{ \rSSs{s} } \| |\nabla_x|^{\boldsymbol{\alpha}} f \|_{ \rSSs{s-\boldsymbol{\alpha}} } \ .$$
	One proves similarly that
	\begin{align*}
		\| \RR^\eps[f] - \RR^\eps[f'] \|_{ \rSSs{s} \cap \rhSSs{\boldsymbol{\alpha}-1} } \lesssim & \| f -f' \|_{ \rSSs{s} } \| |\nabla_x|^{\boldsymbol{\alpha}} (f, f') \|_{ \rSSs{s-\boldsymbol{\alpha}} } \\
		& + \| (f,f') \|_{ \rSSs{s} } \| |\nabla_x|^{\boldsymbol{\alpha}} (f- f') \|_{ \rSSs{s-\boldsymbol{\alpha}} } \ .
	\end{align*}
	
	\section{The Boltzmann-Fermi-Dirac equation}
	\label{scn:BFD}
	
	The estimates we point out in this section are taken from \cite{JXZ22}. Note that they were established for $\gamma=1$, however they hold for any $\gamma \in [0, 1]$ by replacing the use of the triangle inequality
	$$|v-v_*| \lesssim \la v \ra \la v_* \ra \qquad \text{by} \qquad |v-v_*|^\gamma \lesssim \la v \ra^\gamma \la v_* \ra^\gamma$$
	and noting that it still holds as a classical result of the theory for the Boltzmann equation that
	$$\nu(v) = \int_{ \R^d_{ v_* } \times \S^{d-1}_\sigma } B(v-v_*, \sigma) \MM_* \d \sigma \d v_* \approx \la v \ra^\gamma \, .$$
	We expand the Boltzmann-Fermi-Dirac collision operator as
	$$\CC[ \MM + \eps f ] = \eps \LL f + \eps^2 \QQ(f, f) + \eps^3 \RR[f]$$
	where $\LL$ is defined in \cite[(1.11)]{JXZ22}, $\QQ$ in \cite[(1.15)]{JXZ22} and $\RR[f] = \TT(f, f, f)$ with $\TT$ defined in \cite[(1.16)]{JXZ22}.
	
	We define the weight $\mu$ as
	$$\mu = \MM (1 - \MM)$$
	and the functional spaces as
	$$\Ss = L^2(\mu^{-1} ) \, , \qquad \Ssp = L^2\left( \la v \ra^\gamma \mu^{-1} \right) \, , \qquad \Ssm = L^2\left( \la v \ra^{-\gamma} \mu^{-1} \right) \, .$$
	The assumption \ref{L3} comes from \cite[Proposition 2.1]{JXZ22}, \ref{Bbound} comes from \cite[eq. (2.6)]{JXZ22}, the sufficient assumption \ref{AsN_suff} is given by \cite[eq. (2.8)]{JXZ22}.
	
	\begin{rem}
		The quantum Landau equation does not fall within our framework: the nonlinear operator involves terms of the form
		$$\nabla \cdot \left( F(v)^2 \int_{ \R^d } a(v,v_*) (\nabla F) (v_*) \d v_* \right) \ ,$$
		thus its expansion around $\MM$ would involve in particular a quadratic term $\alpha(v) f^2(v)$, which makes it impossible to establish closed estimates in $L^2_v$ (without working in $H^{\frac{d}{2}+\eps}_v$). This equation is usually considered together with an \textit{a priori} bound in $L^\infty$ corresponding to Pauli's exclusion principle, which is not really compatible with the hilbertian setting of the present document.
		
		The quantum Boltzmann equation, however, falls within our framework because of its stronger non-local nature: the nonlinear operator only involves terms of the form
		$$\int_{ \S^{d-1} \times \R^d } a(v,v_*, \sigma) F( u_1 ) F( u_2 ) F( u_3 ) \d \sigma \d v_* \ ,$$
		where each $u_k$ is a function of $(v, v_*)$ and there never holds $u_k = u_\ell$ for $k \neq \ell$.
	\end{rem}
	
	\appendix

	\section{Expansion of macroscopic quantities}

\label{scn:exp_macro}
	In this section, we establish expansions at order $\OO(\eps^3)$ of the macroscopic quantities
	\begin{gather*}
		R = \int F \d v \, , \qquad
		U = \frac{1}{R} \int F v \,  \d v \, , \qquad
		T = \frac{1}{d R} \int F | v - U |^2 \, \d v \, ,
	\end{gather*}
	as well as the usual Maxwellian with the same moments as $F$:
	\begin{equation*}
		\MM[ F ] = \MM[R ; U ; T] = \frac{R}{ (2 \pi T)^{ \frac{d}{2} } } \exp \left( - \frac{|v - U |^2}{2 T} \right) = \frac{R}{ T^{d/2} } M \left(  \frac{v-U}{\sqrt{T}}\right) \ ,
	\end{equation*}
	where $M$ denotes the centered reduced gaussian in dimension $d$. Let us consider the expansion around $M$
	\begin{equation*}
		F^\eps = M + \eps f \ ,
	\end{equation*}
	we denote the macroscopic fluctuations associated to $f$ as
	\begin{gather*}
		\rho = \int f \d v \, , \qquad u = \int v f \d v \, , \qquad \theta = \frac{1}{d} \int f \left( |v|^2-d \right) \d v \, .
	\end{gather*}
	The subsequent computations will correspond to the case $\mu = M$ in Assumption \ref{AsL}, therefore $(E, K) = (d, 1+2/d)$ and
	\begin{equation}
		\label{eq:Pi_Maxwellian}
		M(v)^{-1} \Pi f(v) = \rho + u \cdot v + \theta \frac{|v|^2-d}{2} \ .
	\end{equation}
	\textbf{Notation:} For every quantity $A^\eps$ we expand, we will denote for any $n \ge 0$
	$$A^\eps = \sum_{k=0}^{n-1} \eps^k A_k + \eps^{n} A_{n}^\eps \, .$$
	We will denote by $[f]_n$ any linear combination of terms of the form
	$$\frac{\rho^a u^b \theta^c \eps^\delta}{(1+\eps \rho)^p} \ , \qquad a, c, p, \delta \in \N \ , \quad b \in \N^d \ , \quad a + |b| + c \ge n \ , $$
	and by $\{f\}_n$ any linear combination of terms of the form
	$$\frac{[f]_n}{ (1+\eps[f]_1)^q } \ , \qquad q \in \N \ . $$
	Note that, for any $s > \frac{d}{2}$, Lemma \ref{lem:bilinear_analytic_sobolev} implies that for any $\NN(\rho, u, \theta) = [f]_n, \{f\}_n$ with $n \ge 2$, there is some $\delta_\NN > 0$ small enough such that for any $\|(\rho, \rho', u,u', \theta, \theta') \|_{ \H^s } \le \delta_\NN$
	\begin{subequations}
		\label{eq:modulo_estimate}
			\begin{gather}
			\| \NN(\rho,u,\theta) \|_{ \H^s \cap \dot{\H}^{\boldsymbol{\alpha}-1} } \lesssim \| \nabla^{\boldsymbol{\alpha}} (\rho,u,\theta) \|_{ \H^{s-\boldsymbol{\alpha}} } \| (\rho,u,\theta) \|_{ \H^s } \ , 
		\end{gather}
		and, since $\NN$ has the structure of rational function, on may also prove the Lipschitz estimate
		\begin{equation}
			\begin{split}
				\| \NN(\rho,u,\theta) & - \NN(\rho',u',\theta') \|_{ \H^s \cap \dot{\H}^{\boldsymbol{\alpha}-1} } \\
				& \lesssim \| \nabla^{\boldsymbol{\alpha}} (\rho,u,\theta) - \nabla^{\boldsymbol{\alpha}} (\rho',u',\theta') \|_{ \H^{s-\boldsymbol{\alpha}} } \| (\rho, \rho',u, u',\theta, \theta') \|_{ \H^s } \ .
			\end{split}
		\end{equation}
	\end{subequations}
	Finally, we will denote
	$$V_n = (1, v, v^{\otimes 2} , \dots, v^{\otimes n}) \ .$$
	
	\subsection{Expansion of the density and velocity}

	We have the expansions
	$$R^\eps = 1 + \eps \rho \, , \qquad U^\eps = \eps u  - \eps^2 \rho u + \eps^3 [f]_3 \, .$$
	
	\subsection{Expansion of the temperature}
	\label{sec:expansion_temperature}
	We start with the definition of $T^\eps$ and use the fact that $\int |v|^2 F \d v = d + d \eps (\theta + \rho) = d R^\eps + d \eps \theta$:
	\begin{align*}
		T^\eps = & \frac{1}{d R^\eps } \int \left( | v |^2 + | U^\eps |^2  - 2 v \cdot U^\eps \right) F \d v \\
		= & 1 + \frac{\eps \theta}{R^\eps} - \frac{|U^\eps|^2}{d} \\
		= & 1 + \eps \theta - \eps^2 \left( \rho \theta + \frac{ |u|^2 }{d} \right) + \eps^3 [f]_3 \, .
	\end{align*}
	
	\subsection{Expansion of the centered energy density}
	One has the expansion
	\begin{align*}
		W^\eps = \frac{1}{2} | v - U^\eps |^2 = & \frac{|v|^2}{2} + \frac{| U^\eps |}{2} - v \cdot U^\eps \\
		= & \frac{|v|^2}{2} - \eps u \cdot v + \eps^2 \left( \frac{|u|^2}{2} + \rho u \cdot v \right) + \eps^3 [f]_3 \cdot (1, v) \ .
	\end{align*}
	
	\subsection{Expansion of the inverse temperature}
	One has the expansions
	$$\frac{1}{T^\eps} =1 - \eps T^\eps_1 + \eps^2 ( T^\eps_1 )^2 - \eps^3 \frac{( T^\eps_1 )^3}{T^\eps}$$
	therefore, since $T^\eps = 1 + \eps[f]_1$, one has
	$$\frac{1}{T^\eps} = 1 - \eps \theta + \eps^2 \left( \theta (\rho + \theta) + \frac{|u|^2}{d} \right) + \eps^3 \{f\}_3 \ .$$

	\subsection{Expansion of the rescaled relative energy density}
	\label{scn:exp_X}
	We now consider the quantity $X^\eps = \frac{|v-U^\eps|^2}{2 T^\eps}$. Multiplying the two previous expansions yields
	\begin{align*}
		X^\eps = \frac{W^\eps}{T^\eps} = & \frac{|v|^2}{2} - \eps \left( u \cdot v + \frac{|v|^2}{2} \theta \right) \\
		& + \eps^2 \left( (\rho+\theta) \left( u \cdot v + \theta \frac{|v|^2}{2} \right) + \frac{|u|^2}{2} \left( 1 + \frac{|v|^2}{d} \right) \right) \\
		& + \eps^3 \{f\}_3 \cdot V_2 \ .
	\end{align*}

	\subsection{Expansion of the unscaled Maxwellian}
 	Let us denote $I(r) = \sum_{n \ge 0} \frac{(-r)^n}{(3+n)!}$, we have that
	$$e^{ - X^\eps }  = e^{ - \frac{|v|^2}{2}} e^{-\eps X_1^\eps} =  e^{- \frac{|v|^2}{2} } \left( 1 - \eps X^\eps_1 + \frac{\eps^2}{2} (X_1^\eps)^2 + \eps^3 (X_1^\eps)^3 I(\eps X_1^\eps) \right)$$
	therefore, one has
	$$\frac{e^{ - X^\eps }}{ (2 \pi)^{d/2} } = M \left( 1 + \eps E_1 + \eps^2 E_2 + \eps^3 E_3^\eps \right)$$
	where the first terms are given by
	\begin{gather*}
		E_1 = \theta \frac{|v|^2}{2} + u \cdot v \ , \\
		E_2 = -\frac{|v|^2}{2} \left( \theta(\rho+\theta) + \frac{|u|^2}{d} \right) - (\rho+\theta)(u \cdot v) - \frac{|u|^2}{2} + \frac12 \left( u \cdot v + \theta \frac{|v|^2}{2} \right)^2 \ ,
	\end{gather*}
	and the remainer is of the form
	 $$E_3^\eps = \{f\}_3 \left[ 1 +  I \left( \eps \{f\}_1 \cdot V_2 \right) \right] \cdot V_6 \, .$$
	 
	 \subsection{Expansion of the $\boldsymbol{-\frac{d}{2}}$--power of the temperature}
	 Let us denote $J(r) = \sum_{n \ge 0} \binom{d/2}{n+3}$, we have the expansion
	 $$\left( T^\eps \right)^{-d/2} = 1 - \frac{d}{2} \eps T^\eps_1 + \frac{d}{4} \left(\frac{d}{2} + 1\right) \eps^2 ( T^\eps_1 )^2 + \eps^3 J(\eps\{ f \}_1 )$$
	 so we deduce
	 $$\left( T^\eps \right)^{-d/2} =  1 - \eps \frac{d}{2} \theta - \eps^2 \left(  \frac{|u|^2 }{2}  + \frac{d}{2} \rho \theta - \frac{d (d+2)}{8} \theta^2 \right) + \eps^3 \{f\}_3 \left[ 1 + J(\eps \{f\}_1) \right] \ .$$

	 \subsection{Expansion of the Maxwellian}
	 \label{scn:exp_maxw}
	 We now turn to the expansion of the Maxwellian:
	 \begin{align*}
	 	\MM[ M + \eps f ] = \frac{R^\eps e^{-X^\eps} }{ (2\pi T^\eps )^{d/2} } = M \frac{R^\eps e^{-\eps X^\eps_1}}{(T^\eps)^{ d/2 } } \ ,
	 \end{align*}
	 and thus one has
	 $$\MM[ M + \eps f ] = M + \eps \Pi f + \eps^2 \QQ(f,f) + \eps^3 \RR^\eps[f]$$
	 where $\Pi$ is defined in \eqref{eq:Pi_Maxwellian}, the bilinear approximation $\QQ(f,f)$ is given by
	 \begin{align*}
	 	M^{-1} \QQ(f,f) = &  - \left( |u|^2 - \frac{d(d+2)}{8} \theta^2  -\rho^2 \right) \\
	 	& + \frac{|v|^2}{2} \left( \frac{d+2}{2} \theta^2 + \frac{|u|^2}{2} \right) + \frac{d+2}{2} \theta (u \cdot v) \\
	 	& - \frac12 \left( u\cdot v +\theta \frac{|v|^2}{2} \right)^2 \ ,
	 \end{align*}
	 and the fully nonlinear remainder $\RR^\eps[f]$ is a linear combination of terms of the form
	 $$M(v)  \{f\}_3 I \left( \eps \{f\}_1 \cdot V_2 \right)^\alpha J \left( \eps \{f\}_1 \right)^\beta \cdot V_6 \ , \qquad \alpha, \beta \in \{0,1\} \ .$$
	 As for \eqref{eq:modulo_estimate}, one proves that
	 
	 \section{Nonlinear estimates in Sobolev spaces}
	 
	 \label{scn:nonlinear_sobolev}
	 \begin{lem}
	 	\label{lem:bilinear_estimate_sobolev}
	 	For any $0 \le \boldsymbol{\alpha} < \frac{d}{2} < s$, there holds for some $C = C(s, \boldsymbol{\alpha}) > 0$
	 	$$\| \psi \varphi \|_{ \H^s \cap \dot{\H}^{\boldsymbol{\alpha}-1}} \le C \| \psi \|_{ \H^s } \| | \nabla_x |^{\boldsymbol{\alpha}} \varphi \|_{ \H^{s-\boldsymbol{\alpha} } }$$
	 \end{lem}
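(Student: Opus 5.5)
We split the target norm into its two pieces and treat them separately: the high-frequency piece $\|\psi\varphi\|_{\H^s}$ and the low-frequency piece $\|\psi\varphi\|_{\dot{\H}^{\boldsymbol{\alpha}-1}}$.

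\emph{The $\H^s$ part.} Since $s>\frac d2$, $\H^s$ is an algebra, so $\|\psi\varphi\|_{\H^s}\lesssim \|\psi\|_{\H^s}\|\varphi\|_{\H^s}$. It therefore suffices to show
$$\|\varphi\|_{\H^s}\lesssim \||\nabla_x|^{\boldsymbol{\alpha}}\varphi\|_{\H^{s-\boldsymbol{\alpha}}}.$$
For $|\xi|\ge1$ this is immediate, because $\la\xi\ra^{2s}\lesssim |\xi|^{2\boldsymbol{\alpha}}\la\xi\ra^{2(s-\boldsymbol{\alpha})}$. For $|\xi|\le 1$, however, the left-hand side involves $\|\widehat\varphi\|_{L^2(|\xi|\le1)}$, which is \emph{not} controlled by $|\xi|^{\boldsymbol{\alpha}}\widehat\varphi$ when $\boldsymbol{\alpha}>0$. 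So the plain algebra property is not enough, and the low frequencies of $\varphi$ have to be handled differently.

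\emph{Paraproduct refinement.} We split $\varphi=\varphi_{\rm lo}+\varphi_{\rm hi}$ with a smooth cutoff at $|\xi|\sim1$.
\begin{itemize}
\item[(i)] For $\varphi_{\rm hi}$, the algebra property together with the bound above gives the desired estimate directly.
\item[(ii)] For $\varphi_{\rm lo}$, we use $\|\psi\varphi_{\rm lo}\|_{\H^s}\lesssim \|\psi\|_{\H^s}\|\varphi_{\rm lo}\|_{L^\infty}+\|\psi\|_{L^\infty}\|\varphi_{\rm lo}\|_{\H^s}$, which holds since $\varphi_{\rm lo}$ is smooth with frequencies in $\{|\xi|\le 1\}$. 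We then need $\|\varphi_{\rm lo}\|_{L^\infty}\lesssim\|\widehat{\varphi}_{\rm lo}\|_{L^1}$, and by Cauchy–Schwarz
$$\int_{|\xi|\le1}|\widehat\varphi|\,\d\xi\le \Big(\int_{|\xi|\le1}|\xi|^{-2\boldsymbol{\alpha}}\,\d\xi\Big)^{1/2}\||\nabla|^{\boldsymbol{\alpha}}\varphi\|_{L^2}.$$
This is finite exactly because $\boldsymbol{\alpha}<\frac d2$.
\end{itemize}
The term $\|\varphi_{\rm lo}\|_{\H^s}$ in (ii) is still problematic: it is really an $L^2$ norm at low frequency. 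To avoid it, we keep $\varphi_{\rm lo}$ only in $L^\infty$ (or $\dot{\H}^{\boldsymbol{\alpha}}$) and let $\psi$ absorb all derivatives. Since $\varphi_{\rm lo}$ is band-limited, its derivatives are again bounded in $L^\infty$ by the same $L^1$-Fourier bound. The Leibniz/Kato–Ponce estimate then gives
$$\|\psi\varphi_{\rm lo}\|_{\H^s}\lesssim\|\psi\|_{\H^s}\sum_{k\le \lceil s\rceil}\|\nabla^k\varphi_{\rm lo}\|_{L^\infty}\lesssim \|\psi\|_{\H^s}\||\nabla|^{\boldsymbol{\alpha}}\varphi\|_{L^2}.$$

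\emph{The $\dot{\H}^{\boldsymbol{\alpha}-1}$ part.} Here $\boldsymbol{\alpha}-1\in(-1,0]$, and we argue by duality and the homogeneous Sobolev embedding. With $\frac1q=\frac12+\frac{1-\boldsymbol{\alpha}}{d}$, we have $L^q\subset\dot{\H}^{\boldsymbol{\alpha}-1}$. The exponent satisfies $q\in[1,2]$ provided $1-\boldsymbol{\alpha}\le\frac d2$, which holds for $d\ge2$. By Hölder,
$$\|\psi\varphi\|_{L^q}\le\|\psi\|_{L^{r}}\|\varphi\|_{L^{p}},\qquad \frac1p=\frac12-\frac{\boldsymbol{\alpha}}d,\quad \frac1r=\frac1q-\frac1p=\frac1d.$$
Two embeddings close the estimate:
\begin{itemize}
\item $\|\varphi\|_{L^p}\lesssim\||\nabla|^{\boldsymbol{\alpha}}\varphi\|_{L^2}$ by the homogeneous Sobolev embedding $\dot{\H}^{\boldsymbol{\alpha}}\subset L^p$, valid since $\boldsymbol{\alpha}<\frac d2$;
\item $\|\psi\|_{L^d}\lesssim\|\psi\|_{\H^s}$, since $d\ge2$ and $s>\frac d2$ place $L^d$ between $L^2$ and $L^\infty$.
\end{itemize}

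\emph{Main obstacle.} The delicate points are the endpoint cases. When $q=1$ the embedding $L^1\subset\dot{\H}^{\boldsymbol{\alpha}-1}$ fails; this is the case $d=2$, $\boldsymbol{\alpha}=0$, which lies outside the range considered in the paper. When $\boldsymbol{\alpha}=0$ and $d\ge3$ one has $p=2$, which is fine. The other delicate point is the low-frequency handling in the $\H^s$ part described above. If the Kato–Ponce step proves awkward, we would instead use a Littlewood–Paley paraproduct decomposition $\psi\varphi=T_\psi\varphi+T_\varphi\psi+R(\psi,\varphi)$ and estimate each term in the inhomogeneous Besov scale.
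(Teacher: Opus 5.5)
Your proof is correct on the range where the lemma is actually used. It follows a different route from the paper in both halves.

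\textbf{The $\H^s$ part.} The paper needs no low/high splitting. It applies the homogeneous tame estimate $\|\psi\varphi\|_{\dot{\H}^s}\lesssim\|\psi\|_{L^\infty}\|\varphi\|_{\dot{\H}^s}+\|\psi\|_{\dot{\H}^s}\|\varphi\|_{L^\infty}$ \cite[Corollary 2.54]{BCD}. It then bounds $\|\varphi\|_{L^\infty}\lesssim\|\widehat{\varphi}\|_{L^1}\lesssim\||\nabla_x|^{\boldsymbol{\alpha}}\varphi\|_{\H^{s-\boldsymbol{\alpha}}}$ over all frequencies at once, since $(\la\xi\ra^{s-\boldsymbol{\alpha}}|\xi|^{\boldsymbol{\alpha}})^{-1}\in L^2_\xi$. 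It also uses $\|\varphi\|_{\dot{\H}^s}\le\||\nabla_x|^{\boldsymbol{\alpha}}\varphi\|_{\H^{s-\boldsymbol{\alpha}}}$, and gets the $L^2$ part from $\|\psi\|_{L^2}\|\varphi\|_{L^\infty}$.

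The low-frequency obstruction you hit comes only from using the inhomogeneous tame estimate. Your workaround is valid: the algebra property for $\varphi_{\mathrm{hi}}$, and a $W^{\lceil s\rceil,\infty}$ multiplier bound for the band-limited $\varphi_{\mathrm{lo}}$. It rests on the same key fact as the paper, namely $\widehat{\varphi}\in L^1$ near $\xi=0$ because $\boldsymbol{\alpha}<\frac d2$, but it is longer.

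\textbf{The $\dot{\H}^{\boldsymbol{\alpha}-1}$ part.} When $d=2$, the paper uses the product law $\dot{\H}^0\times\dot{\H}^{\boldsymbol{\alpha}}\to\dot{\H}^{\boldsymbol{\alpha}-1}$ \cite[Corollary 2.55]{BCD}. When $d\ge3$, it simply bounds $\|\cdot\|_{\dot{\H}^{\boldsymbol{\alpha}-1}}\le\|\cdot\|_{\H^s}$, which needs $\boldsymbol{\alpha}\ge1$ (this is the case $\boldsymbol{\alpha}=1$, $d=3$ of Assumption \ref{AsN}). Your H\"older--Sobolev argument is more elementary and treats all $\boldsymbol{\alpha}\in(0,1]$ uniformly in $d\ge2$. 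It therefore also covers $d\ge3$ with $\boldsymbol{\alpha}<1$, which the paper's argument does not address.

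\textbf{Two remarks on the range.}
\begin{enumerate}
\item When $d\ge3$, the lemma also allows $1<\boldsymbol{\alpha}<\frac d2$. There your exponent gives $q>2$ and the embedding $L^q\subset\dot{\H}^{\boldsymbol{\alpha}-1}$ fails. Writing $\boldsymbol{\alpha}-1\in(-1,0]$ silently drops this case. Close it with the paper's trivial bound, which is valid since $0\le\boldsymbol{\alpha}-1\le s$.
\item At $(d,\boldsymbol{\alpha})=(2,0)$, it is not only your method that breaks: the inequality itself is false. Take $\psi=\varphi$ a Gaussian; then $\widehat{\psi\varphi}(0)\neq0$ while $|\xi|^{-2}\notin L^1_{\mathrm{loc}}(\R^2)$. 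The paper's Corollary 2.55 likewise requires $\boldsymbol{\alpha}>0$. The lemma should be read with $\boldsymbol{\alpha}>0$ when $d=2$, as in Assumption \ref{AsN}.
\end{enumerate}
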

	 
	 \begin{proof}
	 	First, note that because $0 \le \boldsymbol{\alpha} < \frac{d}{2} < s$, one has
	 	$$\| \varphi \|_{L^\infty} \lesssim \| \widehat{\varphi} \|_{L^1} \lesssim \| | \nabla_x |^{\boldsymbol{\alpha}} \varphi \|_{ \H^{s-\boldsymbol{\alpha} } } \quad \text{since} \quad \left( \la \xi \ra^{s-\boldsymbol{\alpha}} | \xi |^{\boldsymbol{\alpha}} \right)^{-1} \in L^2_\xi  \, .$$
	 	As a consequence, using \cite[Corollary 2.54]{BCD}, it follows that
	 	\begin{align*}
	 		\| \psi \varphi  \|_{ \dot{\H}^s } \lesssim & \| \psi \|_{ \H^s } \left( \| \varphi \|_{L^\infty} + \| \varphi \|_{ \dot{\H}^s} \right) \\
	 		\lesssim & \| \psi \|_{ \H^s } \| | \nabla_x |^{\boldsymbol{\alpha}} \varphi \|_{ \H^{s-\boldsymbol{\alpha} } } \, ,
	 	\end{align*}
	 	and thus
	 	\begin{equation}
	 		\label{eq:bilinear_sobolev}
	 		\| \psi \varphi  \|_{ \H^s } \lesssim \| \psi \|_{ \H^s } \| | \nabla_x |^{\boldsymbol{\alpha}} \varphi \|_{ \H^{s-\boldsymbol{\alpha} } } \, .
	 	\end{equation}
	 	Furthemore, one has from \cite[Corollary 2.55]{BCD} in the case of $d=2$
	 	$$ \| \varphi \psi \|_{ \dot{\H}^{\boldsymbol{\alpha}-1} } \lesssim \| \psi \|_{L^2} \| \varphi \|_{ \dot{\H}^{ \boldsymbol{\alpha} } } \lesssim \| \psi \|_{ \H^s } \| | \nabla_x |^{\boldsymbol{\alpha}} \varphi \|_{ \H^{s-\boldsymbol{\alpha} } }  \, ,$$
	 	and when $d \ge 3$, this control follows from \eqref{eq:bilinear_sobolev} since $0 \le \boldsymbol{\alpha} - 1 \le s$.
	 	Put together, these estimates yield the conclusion.
	 \end{proof}
	 
	 \begin{lem}
	 	\label{lem:bilinear_analytic_sobolev}
	 	For any $s > \frac{d}{2}$, there exists some $C=C(s) > 0$ such that the following holds. Consider $K \ge 0$ analytic functions given by
	 	$$A_k(z) = \sum_{n \ge 0} a_{k,n} z^n  \, , \qquad |z| < R \le \infty \ , \qquad k = 1, \dots, K \ ,$$
	 	then under the smallness assumption $\| \psi_k \|_{ \H^s } < R / C$, one has the estimates:
	 	$$\left\| \psi \prod_{k=1}^K A_k( \psi_k ) \right\|_{ \H^s  } \le \| \psi \|_{ \H^s } \prod_{k=1}^K \AA_k ( C \| \psi_k \|_{ \H^s } )  $$
	 	 where we have denoted
	 	$$\AA_k(z) := \sum_{n \ge 0} |a_{k,n}| z^{n} \, .$$
	 \end{lem}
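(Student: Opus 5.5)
The plan is to expand each analytic function in its power series, use the algebra property of $\H^s$ for $s > \frac d2$, and sum the resulting absolutely convergent series. First, fix the constant $C = C(s) \ge 1$ given by the classical algebra estimate
$$\| \varphi \psi \|_{ \H^s } \le C \| \varphi \|_{ \H^s } \| \psi \|_{ \H^s } \, , \qquad s > \tfrac d2 \, .$$
This estimate follows, as in the proof of Lemma \ref{lem:bilinear_estimate_sobolev}, from $\la \xi \ra^s \le 2^s ( \la \xi - \eta \ra^s + \la \eta \ra^s )$, Young's convolution inequality, and $\| \widehat{\varphi} \|_{L^1} \lesssim \| \varphi \|_{\H^s}$. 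Iterating it gives, for any finite collection of functions,
$$\Big\| \psi \prod_{j=1}^m \varphi_j \Big\|_{\H^s} \le C^m \| \psi \|_{\H^s} \prod_{j=1}^m \| \varphi_j \|_{\H^s} \, .$$
The important point is that the factor $\psi$ carries no constant; only each additional factor costs one $C$.

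Next, I expand the product formally:
$$\psi \prod_{k=1}^K A_k(\psi_k) = \sum_{n_1, \dots, n_K \ge 0} \Big( \prod_{k} a_{k, n_k} \Big) \, \psi \, \psi_1^{n_1} \cdots \psi_K^{n_K} \, .$$
Applying the iterated estimate with $m = n_1 + \dots + n_K$ factors bounds each term by $\| \psi \|_{\H^s} \prod_k |a_{k,n_k}| \, ( C \| \psi_k \|_{\H^s} )^{n_k}$. The smallness assumption $C \| \psi_k \|_{\H^s} < R$ places each $C\|\psi_k\|_{\H^s}$ inside the disc of absolute convergence of $\AA_k$. The multiple series therefore converges absolutely in $\H^s$, and its norm is bounded by
$$\| \psi \|_{\H^s} \prod_k \AA_k( C \| \psi_k \|_{\H^s} ) \, .$$

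The remaining step is to justify that this $\H^s$-convergent series actually equals the pointwise function $\psi \prod_k A_k(\psi_k)$. This is where some care is needed, but it is not hard. Since $\| \psi_k \|_{L^\infty} \le \| \widehat{\psi_k} \|_{L^1} \le C \| \psi_k \|_{\H^s} < R$ (enlarging $C$ if necessary so that it also dominates the embedding constant), each power series $A_k(\psi_k(x))$ converges pointwise for every $x$. Convergence in $\H^s \subset L^2$ yields an a.e. convergent subsequence, so the two limits coincide a.e. This identifies the sum and concludes the proof. The only real point of attention is keeping the single constant $C$ consistent between the algebra estimate and the $L^\infty$ embedding, which I handle by taking $C$ to be the maximum of the two constants.
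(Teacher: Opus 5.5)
Your proposal is correct and follows essentially the same route as the paper: expand each $A_k$ as a power series, take $C$ to be the norm of multiplication on $\H^s$ so that each term is bounded by $\| \psi \|_{\H^s} \prod_k |a_{k,n_k}| ( C \| \psi_k \|_{\H^s} )^{n_k}$, and sum the series. Your additional step, identifying the $\H^s$-convergent series with the pointwise function $\psi \prod_k A_k(\psi_k)$ via the embedding $\H^s \hookrightarrow L^\infty$, is a detail the paper leaves implicit.
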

	 
	 \begin{proof}
	 	By expanding each $A_k$, one has (writing $\boldsymbol{n} = (n_1, \dots, n_K)$)
	 	\begin{align*}
	 		\left\| \varphi \prod_{k=1}^K A_k( \psi_k ) \right\|_{ \H^s } = & \left\| \varphi \prod_{k=1}^K \sum_{ n \ge 0 } a_{k, n}  \psi_k^{n} \right\|_{ \H^s } \\
	 		= & \left\| \sum_{ \boldsymbol{n} \in \N^K } \varphi \prod_{k=1}^K a_{k, n_k} \psi_k^{n_k} \right\|_{ \H^s } \, , 
	 	\end{align*}
	 	thus, since $s > \frac{d}{2}$, denoting $C = C(s) > 0$ the operator norm of the multiplication in $\H^s$
	 	\begin{align*}
	 		\left\| \varphi \prod_{k=1}^K A_k( \psi_k ) \right\|_{ \H^s } \le & \| \varphi \|_{ \H^s } \sum_{ \boldsymbol{n} \in \N^K } \prod_{k=1}^K |a_{k, n_k}|  \left( C \| \psi_k \|_{ \H^s } \right)^{n_k} \\
	 		= & \| \varphi \|_{ \H^s } \prod_{k=1}^K \sum_{n \ge 0} | a_{k, n}|  \left( C \| \psi_k \|_{ \H^s } \right)^{n} \\
	 		= &  \| \varphi \|_{ \H^s } \prod_{k=1}^K \AA_k \left( C \| \psi_k \|_{ \H^s } \right) \, .
	 	\end{align*}
	 	This concludes the proof.
	 \end{proof}
	 
	 \begin{prop}
	 	\label{prop:AsN_mult_suff}
	 	Assumption \ref{AsN_suff} implies \ref{AsN_bound} and \ref{AsN_Lip}.
	 \end{prop}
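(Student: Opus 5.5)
Since $\RR$ is local in $x$ and $N$--linear, for each fixed $x$ we have $\RR^\eps[f](x) = \RR[f(x), \dots, f(x)]$ (with the convention that $\eps$-dependence is harmless). For the Lipschitz estimate we use multilinearity to telescope:
$$\RR[f,\dots,f] - \RR[g,\dots,g] = \sum_{n=1}^N \RR[g, \dots, g, f-g, f, \dots, f] \, ,$$
where $f-g$ sits in the $n$-th slot. So \ref{AsN_Lip} follows once we prove a single multilinear estimate:
$$\| \RR[h_1, \dots, h_N] \|_{ \rSSsm{s} \cap \rhSSsm{\boldsymbol{\alpha}-1} } \lesssim \sum_{\text{perm.}} \| \overline{h}_1 \|_{\rSSs{s}} \left\| |\nabla_x|^{\boldsymbol{\alpha}} \overline{h}_2 \right\|_{ \rSSsp{s-\boldsymbol{\alpha}} } \prod_{n=3}^N \| \overline{h}_n \|_{ \rSSs{s} } \, ,$$
and \ref{AsN_bound} is the diagonal case. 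In the Lipschitz application, the extra factors $\| (f,g) \|_{\rSSs{s}}^{N-2} \le \delta^{N-2}$ are simply absorbed into the constant. I would therefore start with this reduction and then prove the multilinear estimate.

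The main step is to transfer the pointwise-in-$x$ bound \ref{AsN_suff} to Sobolev norms in $x$, mirroring Lemma \ref{lem:bilinear_estimate_sobolev}. I would expand each $h_n$ in an orthonormal basis $(e_k)$ of $\Ss$ chosen inside $\Ssp$, or equivalently argue by duality. A more robust route works directly on the Fourier side. Since the operator is local in $x$, we have
$$\widehat{\RR[h_1,\dots,h_N]}(\xi) = \int_{\xi_1 + \dots + \xi_N = \xi} \RR[\widehat{h}_1(\xi_1), \dots, \widehat{h}_N(\xi_N)] \, ,$$
and by \ref{AsN_suff},
$$\| \widehat{\RR}(\xi) \|_{\Ssm} \lesssim \sum_{\text{perm.}} \big( \| \widehat{\overline{h}}_1 \|_{\Ssp} * \| \widehat{\overline{h}}_2 \|_{\Ss} * \cdots \big)(\xi) \, .$$
This reduces the problem to scalar functions $a_n(x)$, the inverse Fourier transforms of $\| \widehat{h}_n(\xi) \|_{X}$, whose $\H^\sigma$ norms coincide with the $\H^\sigma_x(X_v)$ norms of $h_n$. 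The $\rSSsm{s}$ bound then becomes the classical product estimate in $\H^s$ with nonnegative Fourier transforms. The $\dot\H^{\boldsymbol{\alpha}-1}$ bound is the scalar statement of Lemma \ref{lem:bilinear_estimate_sobolev} applied iteratively: first bound the product of the $N-2$ remaining factors in $\H^s$ (an algebra for $s>\frac d2$), then apply the lemma with $\psi$ equal to that product times the $\Ss$-factor and $\varphi$ equal to the $\Ssp$-factor. Both the lemma and the algebra property are monotone in $|\widehat{\cdot}|$, so they apply to convolutions of nonnegative symbols.

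The main obstacle is the distribution of derivatives. Assumption \ref{AsN_suff} puts the $\Ssp$ norm on one factor, but \ref{AsN_bound} requires that the factor carrying $|\nabla_x|^{\boldsymbol{\alpha}}$ also be the one carrying $\Ssp$, so that it is controlled by $\| |\nabla_x|^{\boldsymbol{\alpha}} f \|_{\rSSsp{s-\boldsymbol{\alpha}}}$. For this reason the Lemma \ref{lem:bilinear_estimate_sobolev}-type estimate must be applied with $\varphi$ equal to the $\Ssp$-factor and $\psi$ gathering all the $\Ss$-factors. This requires checking that the high-frequency part of $\psi$ in $\H^s$ is only paired with $\varphi$ measured in $L^\infty$, i.e. with $\widehat{\varphi} \in L^1$, which holds by the first line of the proof of that lemma. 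It also requires that at least two factors be present, so that one factor supplies $\| f\|_{\rSSs{s}}$; this holds because $N \ge 3$.
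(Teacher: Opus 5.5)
Your proof is correct, but it is organized differently from the paper's.

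The paper treats only $N=3$ and works with the vector-valued functions themselves through a Littlewood--Paley decomposition in $x$. It applies \ref{AsN_suff} to triples of dyadic blocks, distributes $L^2_x\times L^\infty_x\times L^\infty_x$ norms by H\"older, and uses the Bernstein-type sums \eqref{eq:LP_sum_all}--\eqref{eq:LP_sum_low} to treat low and high output frequencies separately. It ends with a bound in $\rSSsm{s}\cap\dot{\H}^{\boldsymbol{\alpha}-\frac d2}_x(\Ssm_v)$ and leaves \ref{AsN_Lip} implicit.

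You instead use locality to write $\widehat{\RR[h_1,\dots,h_N]}(\xi)$ as an $N$-fold convolution and apply \ref{AsN_suff} at each frequency tuple. Doing so on the complex vectors $\widehat{h}_n(\xi_n)$ requires complexifying \ref{AsN_suff}, which only costs a constant. This gives the pointwise bound $\|\widehat{\RR}(\xi)\|_{\Ssm}\lesssim\sum_{\text{perm.}}\widehat{A_1\cdots A_N}(\xi)$, with scalar functions satisfying $\widehat{A_n}=\|\widehat{\overline{h}}_n(\xi)\|_{X_n}\ge 0$. Since the bound is pointwise in $\xi$, every weighted $L^2_\xi$ norm passes through. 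Lemma \ref{lem:bilinear_estimate_sobolev}, with $\varphi$ the $\Ssp$-factor, together with the algebra property of $\H^s$ then concludes.

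This transference buys economy and generality. It covers all $N\ge 3$ and all $s>\frac d2$ at once, gives an explicit telescoping proof of \ref{AsN_Lip}, and needs no dyadic bookkeeping. In particular you never meet the high--high$\to$low interactions. The support property invoked in the paper only holds when the largest dyadic index exceeds the other two by a fixed margin. When the two largest are comparable, the product also has low-frequency output, and these resonant terms, though harmless, must be estimated separately. The product laws of \cite{BCD} behind Lemma \ref{lem:bilinear_estimate_sobolev} already include them.

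The paper's route, in exchange, targets a stronger low-frequency norm (exponent $\boldsymbol{\alpha}-\frac d2$ instead of $\boldsymbol{\alpha}-1$), which is more than \ref{AsN_bound} needs. Your $\rhSSsm{\boldsymbol{\alpha}-1}$ bound also covers the literal $\rhSSsm{1-\boldsymbol{\alpha}}$ written in \ref{AsN_bound}, since $\rSSsm{s}\hookrightarrow\rhSSsm{1-\boldsymbol{\alpha}}$.

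One point of precision concerns the $\rSSsm{s}$ part. The classical algebra estimate alone would put $\|\overline{h}_1\|_{\rSSsp{s}}$ on the $\Ssp$-factor, and $\||\nabla_x|^{\boldsymbol{\alpha}}\overline{h}_1\|_{\rSSsp{s-\boldsymbol{\alpha}}}$ does not control that at low frequencies. What you need there is the $\H^s$ half of Lemma \ref{lem:bilinear_estimate_sobolev}, as your last paragraph in effect says.
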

	 
	 \begin{proof}
	 	To simplify the proof, we consider $N=3$.
	 	The assumption \ref{AsN_bound} can be proved using that $\RR$ is trilinear and local in $x$ through Littlewood-Paley decompositions. We denote for any $f=f(x,v)$ for which the computations make sense
	 	$$f = \sum_{j \in \Z} f_j \, , \qquad f_j = \chi(2^{-j} D_x ) f$$
	 	where $\chi$ is some appropriate cutoff function supported in some annulus $\AA$ (see for instance \cite[Section 2.2]{BCD}). Recall that as a classical result, there exists some integer $K > 0$ such that
	 	$$ j \le k \le \ell \Rightarrow \text{supp} \left( \widehat{ f_j g_k h_\ell } \right) \subset \bigcup_{ | m - \ell | \le K } 2^m \AA$$
	 	and thus one has
	 	$$(f g h)_m = \sum_{ | \max\{ j, k, \ell \} - m | \le K } ( f_j g _k h_\ell )_m \, .$$
	 	Thanks to this observation, starting from \ref{AsN_suff}, we have
	 	\begin{align*}
	 		\left\| \RR(f, g, h)_j \right\|_{ L^2_x \Ssm_v } & \lesssim \sum_{ | \max\{ k, \ell, m \} - j | \le K } \left\| \RR\left( f_{k} , g_{\ell}, h_{m} \right) \right\|_{ L^2_x \Ssm_v } \\
	 		\lesssim & \sum_{ | \max\{ k, \ell, m \} - j | \le K } \left\| \| \overline{f}_{k} \|_{ \Ssp_v } \| \overline{g}_{\ell} \|_{ \Ss_v } \| \overline{h}_{m} \|_{ \Ss_v } \right\|_{ L^2_x } \\
	 		\lesssim & \left( \sum_{ | k - j | \le K }  \| \overline{f}_{k} \|_{ L^2_x \Ssp_v } \right) \left( \sum_{k \le j + K} \| \overline{g}_{k} \|_{ L^\infty_x \Ss_v } \right) \left( \sum_{k \le j + K} \| \overline{h}_{k} \|_{ L^\infty_x \Ss_v } \right) \\
	 		& + \left( \sum_{ | k - j | \le K }  \| \overline{f}_{k} \|_{ L^2_x \Ss_v } \right) \left( \sum_{k \le j + K} \| \overline{g}_{k} \|_{ L^\infty_x \Ssp_v } \right) \left( \sum_{k \le j + K} \| \overline{h}_{k} \|_{ L^\infty_x \Ss_v } \right) \, ,
	 	\end{align*}
	 	where, to simplify the subsequent computations, the right hand side is to be summed over all permutations $( \overline{f} , \overline{g} , \overline{h} )$ of $(f, g, h)$. Before we go any further, let us point out that for any $0 \le { \boldsymbol{\alpha} } < \frac{d}{2} < s$, there holds
	 	\begin{align}
	 		\notag
	 		\sum_{k \in \Z} \| \varphi_k \|_{ L^\infty_x Y_v  }
	 		& \lesssim \left( \sum_{ k \le 0 } 2^{ 2 k { \boldsymbol{\alpha} } } \| \varphi_k \|_{ L^2_x Y_v }^2 \right)^{ \frac12 }  + \left( \sum_{ k \ge 0 } 2^{ 2 k s } \| \varphi_k \|_{ L^2_x Y_v }^2 \right)^{ \frac12 } \\
	 		\label{eq:LP_sum_all}
	 		& \lesssim \left\| | \nabla_x |^{ \boldsymbol{\alpha} } \varphi \right\|_{ \H^{s-{ \boldsymbol{\alpha} }}_x Y_v  } \, , 
	 	\end{align}
	 	where we used Bernstein's inequality and the characterization of Sobolev norms. On the other hand, there holds uniformly in $j \le 0$
	 	\begin{align}
	 		\notag
	 		\sum_{k \le j} \| \varphi_k \|_{ L^\infty_x Y_v  } & \lesssim \left( \sum_{ k \le j} 2^{ k (d - 2 { \boldsymbol{\alpha} })  }  \right)^{\frac12} \left( \sum_{k \le 0} 2^{2 { \boldsymbol{\alpha} } k} \| \varphi_k \|_{ L^2_x Y_v }^2 \right)^{\frac12} \\
	 		\label{eq:LP_sum_low}
	 		& \lesssim 2^{ j \left( \frac{d}{2} - { \boldsymbol{\alpha} } \right) } \left\| | \nabla_x |^{ \boldsymbol{\alpha} } \varphi \right\|_{ \H^{s-{ \boldsymbol{\alpha} }}_x Y_v  }  \, .
	 	\end{align}
	 	Turning back to the estimate of $\RR(f, g, h)_j$, we have using \eqref{eq:LP_sum_all} with ${ \boldsymbol{\alpha} } = 0$
	 	\begin{align*}
	 		\left\| \RR(f, g, h)_j \right\|_{ L^2_x \Ssm_v }
	 		\lesssim & \| \overline{h} \|_{ \rSSs{s} } \left( \sum_{ | k - j | \le K }  \| \overline{f}_{k} \|_{ L^2_x \Ssp_v } \right) \left( \sum_{k \le j + K} \| \overline{g}_{k} \|_{ L^\infty_x \Ss_v } \right) \\
	 		& + \| \overline{h} \|_{ \rSSs{s} } \left( \sum_{ | k - j | \le K }  \| \overline{f}_{k} \|_{ L^2_x \Ss_v } \right) \left( \sum_{k \le j + K} \| \overline{g}_{k} \|_{ L^\infty_x \Ssp_v } \right) \, .
	 	\end{align*}
	 	We can then say that for $j \le 0$, one has from \eqref{eq:LP_sum_low} with ${ \boldsymbol{\alpha} }=0$ for the first line and $0 \le { \boldsymbol{\alpha} } < \frac{d}{2}$ for the second line
	 	\begin{equation}
	 		\label{eq:T_low}		
	 		\begin{split}
	 			\left\| \RR(f, g, h)_j \right\|_{ L^2_x \Ssm_v }
	 			\lesssim & \| \overline{h} \|_{ \rSSs{s} } \| \overline{g} \|_{ \rSSs{s} }  \sum_{ | k - j | \le K }  2^{k \frac{d}{2}} \| \overline{f}_{k} \|_{ L^2_x \Ssp_v } \\
	 			& + \| \overline{h} \|_{ \rSSs{s} } \| |\nabla_x|^{ \boldsymbol{\alpha} } \overline{g} \|_{ \rSSs{s-{ \boldsymbol{\alpha} }}}  \sum_{ | k - j | \le K }  2^{k \left( \frac{d}{2}-{ \boldsymbol{\alpha} } \right) } \| \overline{f}_{k} \|_{ L^2_x \Ss_v }  \, .
	 		\end{split}
	 	\end{equation}
	 	We can also say that for $j \ge 0$, one has from \eqref{eq:LP_sum_all} with ${ \boldsymbol{\alpha} }=0$ for the first line and $0 \le { \boldsymbol{\alpha} } < \frac{d}{2}$ for the second line
	 	\begin{equation}
	 		\label{eq:T_high}		
	 		\begin{split}
	 			\left\| \RR(f, g, h)_j \right\|_{ L^2_x \Ssm_v }
	 			\lesssim & \| \overline{h} \|_{ \rSSs{s} } \| \overline{g} \|_{ \rSSs{s} }  \sum_{ | k - j | \le K }   \| \overline{f}_{k} \|_{ L^2_x \Ssp_v } \\
	 			& + \| \overline{h} \|_{ \rSSs{s} } \| |\nabla_x|^{ \boldsymbol{\alpha} } \overline{g} \|_{ \rSSs{s-{ \boldsymbol{\alpha} }}}  \sum_{ | k - j | \le K }  \| \overline{f}_{k} \|_{ L^2_x \Ss_v }  \, .
	 		\end{split}
	 	\end{equation}
	 	This allows to finally conclude using \eqref{eq:T_low} and \eqref{eq:T_high} that
	 	\begin{align*}
	 		\| \RR& (f, g, h) \|_{ \dot{\H}^{ { \boldsymbol{\alpha} } - \frac{d}{2} }_x \Ssm_v }^2 + \| \RR(f,g,h) \|_{ \rSSsm{s} }^2 \\
	 		\lesssim &
	 		\sum_{j \le 0} 2^{2 j \left( { \boldsymbol{\alpha} } - \frac{d}{2} \right) } \| \RR(f, g, h)_{j} \|_{ L^2_x \Ssm_v }^2 + \sum_{j \ge 0} 2^{2 j s } \| \RR(f, g, h)_{j} \|_{ L^2_x \Ssm_v }^2 \\
	 		\lesssim & \| \overline{h} \|_{ \rSSs{s} }^2 \| \overline{ g } \|_{ \rSSs{s} }^2 \left( \sum_{j \le 0} 2^{2 j { \boldsymbol{\alpha} }} \| f_{j} \|_{ L^2_x \Ssp_v }^2 +  \sum_{j \ge 0} 2^{2 s j} \| f_{j} \|_{ L^2_x \Ssp_v }^2 \right) \\
	 		& + \| \overline{h} \|_{ \rSSs{s} }^2 \| | \nabla_x |^{ \boldsymbol{\alpha} } \overline{ g } \|_{ \rSSsp{s-{ \boldsymbol{\alpha} }} }^2 \left( \sum_{j \le 0} \| f_{j} \|_{ L^2_x \Ss_v }^2 +   \sum_{j \ge 0} 2^{2 s j} \| f_{j} \|_{ L^2_x \Ss_v }^2 \right) 
	 		\, ,
	 	\end{align*}
	 	which concludes to
	 	\begin{align*}
	 		\| \RR (f, g, h) \|_{ \dot{\H}^{ { \boldsymbol{\alpha} } - \frac{d}{2} }_x \Ssm_v } + \| \RR(f,g,h) \|_{ \rSSsm{s} }  \lesssim  \| |\nabla_x|^{ \boldsymbol{\alpha} } \overline{f} \|_{ \rSSs{s-{ \boldsymbol{\alpha} }}} \| \overline{g} \|_{ \rSSs{s} } \| \overline{h} \|_{ \rSSs{s}} \, .
	 	\end{align*}
	 \end{proof}
	 
	 \bibliographystyle{plain}
	\bibliography{bibli}

@book{BCD,
	author = {Bahouri, Hajer and Chemin, Jean-Yves and Danchin, Rapha{\"e}l},
	title = {Fourier analysis and nonlinear partial differential equations},
	fseries = {Grundlehren der Mathematischen Wissenschaften},
	series = {Grundlehren Math. Wiss.},
	issn = {0072-7830},
	volume = {343},
	isbn = {978-3-642-16829-1; 978-3-642-16830-7},
	year = {2011},
	publisher = {Berlin: Springer},
	language = {English},
	doi = {10.1007/978-3-642-16830-7},
	zbMATH = {5826218},
	Zbl = {1227.35004}
}

@article{JXZ22,
	author = {Jiang, Ning and Xiong, Linjie and Zhou, Kai},
	title = {The incompressible {Navier}-{Stokes}-{Fourier} limit from {Boltzmann}-{Fermi}-{Dirac} equation},
	fjournal = {Journal of Differential Equations},
	journal = {J. Differ. Equations},
	issn = {0022-0396},
	volume = {308},
	pages = {77--129},
	year = {2022},
	language = {English},
	doi = {10.1016/j.jde.2021.10.061},
	zbMATH = {7436209},
	Zbl = {1478.35164}
}

@article{GL24,
	author = {Gervais, Pierre and Lods, Bertrand},
	title = {Hydrodynamic limits for kinetic equations preserving mass, momentum and energy: a spectral and unified approach in the presence of a spectral gap},
	fjournal = {Annales Henri Lebesgue},
	journal = {Ann. Henri Lebesgue},
	issn = {2644-9463},
	volume = {7},
	pages = {969--1098},
	year = {2024},
	language = {English},
	doi = {10.5802/ahl.215},
	zbMATH = {7914810},
	Zbl = {1547.35548}
}

@book{LR23,
	author = {Lemarie-Rieusset, Pierre Gilles},
	title = {The {Navier}-{Stokes} problem in the 21st century},
	edition = {2nd edition},
	isbn = {978-0-367-48726-3; 978-1-032-62373-3; 978-1-003-04259-4},
	year = {2023},
	publisher = {Boca Raton, FL: CRC Press},
	language = {English},
	doi = {10.1201/9781003042594},
	zbMATH = {7775147},
	Zbl = {1539.76044}
}

@article{BGLI91,
	author = {Bardos, Claude and Golse, Fran{\c{c}}ois and Levermore, David},
	title = {Fluid dynamic limits of kinetic equations. {I}: {Formal} derivations},
	fjournal = {Journal of Statistical Physics},
	journal = {J. Stat. Phys.},
	issn = {0022-4715},
	volume = {63},
	number = {1-2},
	pages = {323--344},
	year = {1991},
	language = {English},
	doi = {10.1007/BF01026608},
	zbMATH = {8191333}
}

@article{BGLII93,
	author = {Bardos, Claude and Golse, Fran{\c{c}}ois and Levermore, C. David},
	title = {Fluid dynamic limits of kinetic equations. {II}: {Convergence} proofs for the {Boltzmann} equation},
	fjournal = {Communications on Pure and Applied Mathematics},
	journal = {Commun. Pure Appl. Math.},
	issn = {0010-3640},
	volume = {46},
	number = {5},
	pages = {667--753},
	year = {1993},
	language = {English},
	doi = {10.1002/cpa.3160460503},
	zbMATH = {567347},
	Zbl = {0817.76002}
}

@article{GT20,
	author = {Gallagher, Isabelle and Tristani, Isabelle},
	title = {On the convergence of smooth solutions from {Boltzmann} to {Navier}-{Stokes}},
	fjournal = {Annales Henri Lebesgue},
	journal = {Ann. Henri Lebesgue},
	issn = {2644-9463},
	volume = {3},
	pages = {561--614},
	year = {2020},
	language = {English},
	doi = {10.5802/ahl.40},
	zbMATH = {7249460},
	Zbl = {1448.35393}
}

@article{BU91,
	author = {Bardos, Claude and Ukai, Seiji},
	title = {The classical incompressible {Navier}-{Stokes} limit of the {Boltzmann} equation},
	fjournal = {M\(^3\)AS. Mathematical Models \& Methods in Applied Sciences},
	journal = {Math. Models Methods Appl. Sci.},
	issn = {0218-2025},
	volume = {1},
	number = {2},
	pages = {235--257},
	year = {1991},
	language = {English},
	doi = {10.1142/S0218202591000137},
	zbMATH = {32878},
	Zbl = {0758.35060}
}

@article {CGT26,
	AUTHOR = {Carrapatoso, Kleber and Gallagher, Isabelle and Tristani,
	Isabelle},
	TITLE = {The {N}avier--{S}tokes limit of kinetic equations for low
	regularity data},
	JOURNAL = {Tunis. J. Math.},
	FJOURNAL = {Tunisian Journal of Mathematics},
	VOLUME = {8},
	YEAR = {2026},
	NUMBER = {3},
	PAGES = {497--538},
	ISSN = {2576-7658,2576-7666},
	MRCLASS = {35Q20 (35Q30 76D05 76P05)},
	MRNUMBER = {5073100},
	DOI = {10.2140/tunis.2026.8.497},
	URL = {https://doi.org/10.2140/tunis.2026.8.497},
}

@article{GSR04,
	author = {Golse, Fran{\c{c}}ois and Saint-Raymond, Laure},
	title = {The {Navier}-{Stokes} limit of the {Boltzmann} equation for bounded collision kernels},
	fjournal = {Inventiones Mathematicae},
	journal = {Invent. Math.},
	issn = {0020-9910},
	volume = {155},
	number = {1},
	pages = {81--161},
	year = {2004},
	language = {English},
	doi = {10.1007/s00222-003-0316-5},
	zbMATH = {2078251},
	Zbl = {1060.76101}
}

@article{GSR09,
	author = {Golse, Fran{\c{c}}ois and Saint-Raymond, Laure},
	title = {The incompressible {Navier}-{Stokes} limit of the {Boltzmann} equation for hard cutoff potentials},
	fjournal = {Journal de Math{\'e}matiques Pures et Appliqu{\'e}es. Neuvi{\`e}me S{\'e}rie},
	journal = {J. Math. Pures Appl. (9)},
	issn = {0021-7824},
	volume = {91},
	number = {5},
	pages = {508--552},
	year = {2009},
	language = {English},
	doi = {10.1016/j.matpur.2009.01.013},
	zbMATH = {5558950},
	Zbl = {1178.35290}
}

@article{LM10,
	author = {Levermore, C. David and Masmoudi, Nader},
	title = {From the {Boltzmann} equation to an incompressible {Navier}-{Stokes}-{Fourier} system},
	fjournal = {Archive for Rational Mechanics and Analysis},
	journal = {Arch. Ration. Mech. Anal.},
	issn = {0003-9527},
	volume = {196},
	number = {3},
	pages = {753--809},
	year = {2010},
	language = {English},
	doi = {10.1007/s00205-009-0254-5},
	zbMATH = {5731026},
	Zbl = {1304.35476}
}

@article{A12,
	author = {Ars{\'e}nio, Diogo},
	title = {From {Boltzmann}'s equation to the incompressible {Navier}-{Stokes}-{Fourier} system with long-range interactions},
	fjournal = {Archive for Rational Mechanics and Analysis},
	journal = {Arch. Ration. Mech. Anal.},
	issn = {0003-9527},
	volume = {206},
	number = {2},
	pages = {367--488},
	year = {2012},
	language = {English},
	doi = {10.1007/s00205-012-0557-9},
	zbMATH = {6134304},
	Zbl = {1257.35140}
}

@article{G23,
	author = {Gervais, Pierre},
	title = {On the convergence from {Boltzmann} to {Navier}-{Stokes}-{Fourier} for general initial data},
	fjournal = {SIAM Journal on Mathematical Analysis},
	journal = {SIAM J. Math. Anal.},
	issn = {0036-1410},
	volume = {55},
	number = {2},
	pages = {805--848},
	year = {2023},
	language = {English},
	doi = {10.1137/22M1471687},
	zbMATH = {7683251},
	Zbl = {1512.35073}
}

@article{B15,
	author = {Briant, Marc},
	title = {From the {Boltzmann} equation to the incompressible {Navier}-{Stokes} equations on the torus: a quantitative error estimate},
	fjournal = {Journal of Differential Equations},
	journal = {J. Differ. Equations},
	issn = {0022-0396},
	volume = {259},
	number = {11},
	pages = {6072--6141},
	year = {2015},
	language = {English},
	doi = {10.1016/j.jde.2015.07.022},
	zbMATH = {6484234},
	Zbl = {1326.35220}
}

@article{BMM19,
	author = {Briant, Marc and Merino-Aceituno, Sara and Mouhot, Cl{\'e}ment},
	title = {From {Boltzmann} to incompressible {Navier}-{Stokes} in {Sobolev} spaces with polynomial weight},
	fjournal = {Analysis and Applications (Singapore)},
	journal = {Anal. Appl., Singap.},
	issn = {0219-5305},
	volume = {17},
	number = {1},
	pages = {85--116},
	year = {2019},
	language = {English},
	doi = {10.1142/S021953051850015X},
	zbMATH = {7007695},
	Zbl = {1405.35134}
}

@article{CC26,
	author = {Cao, Chuqi and Carrapatoso, Kleber},
	title = {Hydrodynamic limit for the non-cutoff {Boltzmann} equation},
	fjournal = {Annales de l'Institut Henri Poincar{\'e} C. Analyse Non Lin{\'e}aire},
	journal = {Ann. Inst. Henri Poincar{\'e} C, Anal. Non Lin{\'e}aire},
	issn = {0294-1449},
	volume = {43},
	number = {2},
	pages = {417--482},
	year = {2026},
	language = {English},
	doi = {10.4171/aihpc/139},
	zbMATH = {8191825}
}

@article{EP75,
	author = {Ellis, Richard S. and Pinsky, Mark A.},
	title = {The first and second fluid approximation to the linearized {Boltzmann} equation},
	fjournal = {Journal de Math{\'e}matiques Pures et Appliqu{\'e}es. Neuvi{\`e}me S{\'e}rie},
	journal = {J. Math. Pures Appl. (9)},
	issn = {0021-7824},
	volume = {54},
	pages = {125--156},
	year = {1975},
	language = {English},
	zbMATH = {3449148},
	Zbl = {0286.35062}
}

@article{YY16,
	author = {Yang, Tong and Yu, Hongjun},
	title = {Spectrum analysis of some kinetic equations},
	fjournal = {Archive for Rational Mechanics and Analysis},
	journal = {Arch. Ration. Mech. Anal.},
	issn = {0003-9527},
	volume = {222},
	number = {2},
	pages = {731--768},
	year = {2016},
	language = {English},
	doi = {10.1007/s00205-016-1010-2},
	zbMATH = {6647626},
	Zbl = {1348.35164}
}

@article{R21,
	author = {Rachid, Mohamad},
	title = {Incompressible {Navier}-{Stokes}-{Fourier} limit from the {Landau} equation},
	fjournal = {Kinetic and Related Models},
	journal = {Kinet. Relat. Models},
	issn = {1937-5093},
	volume = {14},
	number = {4},
	pages = {599--638},
	year = {2021},
	language = {English},
	doi = {10.3934/krm.2021017},
	zbMATH = {7450819},
	Zbl = {1476.35165}
}

@article {SR03,
	AUTHOR = {Saint-Raymond, Laure},
	TITLE = {From the {BGK} model to the {N}avier-{S}tokes equations},
	JOURNAL = {Ann. Sci. \'Ecole Norm. Sup. (4)},
	FJOURNAL = {Annales Scientifiques de l'\'Ecole Normale Sup\'erieure.
	Quatri\`eme S\'erie},
	VOLUME = {36},
	YEAR = {2003},
	NUMBER = {2},
	PAGES = {271--317},
	ISSN = {0012-9593},
	MRCLASS = {76D05 (35Q30 76A02 76P05 82C40)},
	MRNUMBER = {1980313},
	MRREVIEWER = {J\"urgen\ Socolowsky},
	DOI = {10.1016/S0012-9593(03)00010-7},
	URL = {https://doi.org/10.1016/S0012-9593(03)00010-7},
}

@article {CJ24,
	AUTHOR = {Choi, Young-Pil and Jung, Jinwook},
	TITLE = {Incompressible {N}avier-{S}tokes limit from nonlinear
	{V}lasov-{F}okker-{P}lanck equation},
	JOURNAL = {Appl. Math. Lett.},
	FJOURNAL = {Applied Mathematics Letters. An International Journal of Rapid
	Publication},
	VOLUME = {158},
	YEAR = {2024},
	PAGES = {Paper No. 109214, 7},
	ISSN = {0893-9659,1873-5452},
	MRCLASS = {76N17 (35Q35 76P05)},
	MRNUMBER = {4774079},
	DOI = {10.1016/j.aml.2024.109214},
	URL = {https://doi.org/10.1016/j.aml.2024.109214},
}

@article{CRT22,
	author = {Carrapatoso, Kleber and Rachid, Mohamad and Tristani, Isabelle},
	title = {Regularization estimates and hydrodynamical limit for the {Landau} equation},
	fjournal = {Journal de Math{\'e}matiques Pures et Appliqu{\'e}es. Neuvi{\`e}me S{\'e}rie},
	journal = {J. Math. Pures Appl. (9)},
	issn = {0021-7824},
	volume = {163},
	pages = {334--432},
	year = {2022},
	language = {English},
	doi = {10.1016/j.matpur.2022.05.009},
	zbMATH = {7541871},
	Zbl = {1491.35310}
}

@article{MM16,
	author = {Mathiaud, J. and Mieussens, L.},
	title = {A {Fokker}-{Planck} model of the {Boltzmann} equation with correct {Prandtl} number},
	fjournal = {Journal of Statistical Physics},
	journal = {J. Stat. Phys.},
	issn = {0022-4715},
	volume = {162},
	number = {2},
	pages = {397--414},
	year = {2016},
	language = {English},
	doi = {10.1007/s10955-015-1404-9},
	zbMATH = {6566272},
	Zbl = {1334.35352}
}

@incollection{V02,
	author = {Villani, C{\'e}dric},
	title = {A review of mathematical topics in collisional kinetic theory.},
	booktitle = {Handbook of mathematical fluid dynamics. Vol. 1},
	isbn = {0-444-50330-7},
	pages = {71--305},
	year = {2002},
	publisher = {Amsterdam: Elsevier},
	language = {English},
	zbMATH = {1942873},
	Zbl = {1170.82369}
}

@article{C16,
	author = {Choi, Young-Pil},
	title = {Global classical solutions of the {Vlasov}-{Fokker}-{Planck} equation with local alignment forces},
	fjournal = {Nonlinearity},
	journal = {Nonlinearity},
	issn = {0951-7715},
	volume = {29},
	number = {7},
	pages = {1887--1916},
	year = {2016},
	language = {English},
	doi = {10.1088/0951-7715/29/7/1887},
	zbMATH = {6614383},
	Zbl = {1348.35275}
}

@article{CHY25,
	author = {Choi, Young-Pil and Hwang, Byung-Hoon and Yoo, Yeongseok},
	title = {Global existence of weak solutions to the nonlinear {Vlasov}-{Fokker}-{Planck} equation},
	fjournal = {Journal of Differential Equations},
	journal = {J. Differ. Equations},
	issn = {0022-0396},
	volume = {444},
	pages = {53},
	note = {Id/No 113573},
	year = {2025},
	language = {English},
	doi = {10.1016/j.jde.2025.113573},
	zbMATH = {8098491},
	Zbl = {1572.35260}
}

@article{MT11,
	author = {Motsch, Sebastien and Tadmor, Eitan},
	title = {A new model for self-organized dynamics and its flocking behavior},
	fjournal = {Journal of Statistical Physics},
	journal = {J. Stat. Phys.},
	issn = {0022-4715},
	volume = {144},
	number = {5},
	pages = {923--947},
	year = {2011},
	language = {English},
	doi = {10.1007/s10955-011-0285-9},
	zbMATH = {5976134},
	Zbl = {1230.82037}
}

@article{LY21,
	author = {Liao, Jie and Yang, Xiongfeng},
	title = {Stability of global {Maxwellian} for fully nonlinear {Fokker}-{Planck} equations},
	fjournal = {Journal of Statistical Physics},
	journal = {J. Stat. Phys.},
	issn = {0022-4715},
	volume = {185},
	number = {3},
	pages = {27},
	note = {Id/No 23},
	year = {2021},
	language = {English},
	doi = {10.1007/s10955-021-02844-9},
	zbMATH = {7445220},
	Zbl = {1483.35277}
}

@article{ALTPP00,
	author = {Andries, Pierre and Le Tallec, Patrick and Perlat, Jean-Philippe and Perthame, Beno{\^{\i}}t},
	title = {The {Gaussian}-{BGK} model of {Boltzmann} equation with small {Prandtl} number},
	fjournal = {European Journal of Mechanics. B. Fluids},
	journal = {Eur. J. Mech., B, Fluids},
	issn = {0997-7546},
	volume = {19},
	number = {6},
	pages = {813--830},
	year = {2000},
	language = {English},
	doi = {10.1016/S0997-7546(00)01103-1},
	zbMATH = {1574429},
	Zbl = {0967.76082}
}

@article{BGK54,
	author = {Bhatnagar, P. L. and Gross, E. P. and Krook, M.},
	title = {A model for collision processes in gases. {I}: {Small} amplitude processes in charged and neutral one-component systems},
	fjournal = {Physical Review, II. Series},
	journal = {Phys. Rev., II. Ser.},
	issn = {0031-899X},
	volume = {94},
	pages = {511--525},
	year = {1954},
	language = {English},
	doi = {10.1103/PhysRev.94.511},
	zbMATH = {3087985},
	Zbl = {0055.23609}
}

@article{P89,
	author = {Perthame, B.},
	title = {Global existence to the {BGK} model of {Boltzmann} equation},
	fjournal = {Journal of Differential Equations},
	journal = {J. Differ. Equations},
	issn = {0022-0396},
	volume = {82},
	number = {1},
	pages = {191--205},
	year = {1989},
	language = {English},
	doi = {10.1016/0022-0396(89)90173-3},
	zbMATH = {4136542},
	Zbl = {0694.35134}
}

@article{D94,
	author = {Dolbeault, J.},
	title = {Kinetic models and quantum effects: {A} modified {Boltzmann} equation for {Fermi}-{Dirac} particles},
	fjournal = {Archive for Rational Mechanics and Analysis},
	journal = {Arch. Ration. Mech. Anal.},
	issn = {0003-9527},
	volume = {127},
	number = {2},
	pages = {101--131},
	year = {1994},
	language = {English},
	doi = {10.1007/BF00377657},
	url = {zenodo.org/record/886461},
	zbMATH = {687666},
	Zbl = {0808.76084}
}
\end{document}